\newcommand{\si}[1]{#1}    
\newcommand{\jo}[1]{}    
\newcommand{\ima}[1]{}   

\si{
	\documentclass[10pt,a4paper]{article}
	\usepackage[utf8]{inputenc}
	\usepackage{graphicx}
	\usepackage{amsfonts}
	\usepackage{amsthm}
	\usepackage{amsmath}
	\usepackage{amssymb}
	\usepackage{array}
	\usepackage{xcolor}
	\usepackage{booktabs}
	\usepackage{algorithm}
	\usepackage{indentfirst}
	\usepackage{algorithmic}
	\usepackage{hyperref}
	\usepackage[left=2cm,right=2cm,top=2cm,bottom=2cm]{geometry}
	
	\tracingstats=0
	
	\newtheorem{theorem}{Theorem}[section]
	\newtheorem{proposition}{Proposition}[section]
	\newtheorem{corollary}{Corollary}[section]
	\newtheorem{lemma}{Lemma}[section]
	\newtheorem{definition}{Definition}[section]
	\newtheorem{example}{Example}[section]
	\newtheorem{remark}{Remark}[section]

	\hypersetup{
	  colorlinks=true,
	  citecolor=blue,
	  linkcolor=blue,
	  filecolor=magenta,      
	  urlcolor=cyan,
	}
}

\jo{
	\documentclass[smallextended,referee,envcountsect,]{svjour3}

	\smartqed					
	\usepackage{graphicx}		
	\usepackage{amsfonts}
	\usepackage{amsmath}
	\usepackage{amssymb}
	\usepackage{array}
	\usepackage{xcolor}
	\usepackage{booktabs}
	\usepackage{algorithm}
	\usepackage{indentfirst}
	\usepackage{algorithmic}
	\usepackage{hyperref}
	\journalname{JOTA}
}

\ima{
	\documentclass{imanum}
	\usepackage{graphicx}
	\usepackage{array}
	\usepackage{booktabs}
	\usepackage{algorithm}
	\usepackage{indentfirst}
	\usepackage{algorithmic}
	\jno{drnxxx}
	\received{22 October 2021}
}

\usepackage{tikz}

\newcommand{\E}{\mathcal{E}}					 
\renewcommand{\S}{\mathbb{S}}					 

\newcommand{\seq}[1]{\{#1^k\}_{k\in \N}}		 
\newcommand{\argmin}{\mathop{\mathrm{argmin}}}   
\renewcommand{\bar}{\overline}                   
\newcommand{\bd}{\mathrm{bd\hspace{0.03cm}}}                    
\newcommand{\cl}{\mathrm{cl\hspace{0.03cm}}}  
\newcommand{\I}{\mathbb{I}}                      
\newcommand{\Ker}{\mathrm{Ker\hspace{0.03cm}}}   
\renewcommand{\Im}{\mathrm{Im\hspace{0.03cm}}}   
\renewcommand{\int}{\mathrm{int\hspace{0.03cm}}} 
\newcommand{\F}{\mathcal{F}}                     
\newcommand{\norm}[1]{\|#1\|}                    
\newcommand{\enorm}[1]{\|#1\|_2}                 
\newcommand{\R}{\mathbb{R}}  					 
\newcommand{\Lor}{\mathbb{L}}                    
\newcommand{\T}{\top\hspace{-1pt}}               
\newcommand{\N}{\mathbb{N}}                      
\newcommand{\xb}{\bar{x}}                        

\newcommand{\mand}{ \ \textnormal{ and } \ }
\newcommand{\sdpwcrcq}{weak-CRCQ}
\newcommand{\sdpwcpld}{weak-CPLD}
\newcommand{\socpwcrcq}{weak-CRCQ}
\newcommand{\socpwcpld}{weak-CPLD}

\newcommand{\socpscrcq}{seq-CRCQ}
\newcommand{\socpscpld}{seq-CPLD}
\newcommand{\nlpcrcq}{CRCQ}
\newcommand{\nlpcpld}{CPLD}

\renewcommand{\doteq}{:=}


\si{
}

%
%
%
%
%
%
%
%
%
%
%
%

\begin{document}


\newcommand{\acknowledgementstext}{
	The authors received financial support from FAPESP (grants \jo{\linebreak}2017/18308-2, 2017/17840-2, \ima{\linebreak}2017/12187-9, 2018/24293-0 and 2020/00130-5), \jo{\linebreak}CEPID-CeMEAI (granted by FAPESP 2013/07375-0), CNPq (grants  301888/2017-5, \jo{\linebreak}303427/2018-3, 404656/2018-8, and 306988/2021-6), PRONEX - CNPq/FAPERJ (grant \jo{\linebreak}E-26/010.001247/2016), and ANID (FONDECYT grant  1201982, Program  ANID \jo{\linebreak}ACE210010 and  Basal Program CMM ANID  AFB210005). \jo{Data sharing is not applicable to this article as no new data were created or analysed in this study.}
}

	\title{Global Convergence of Algorithms Under Constant Rank Conditions for Nonlinear Second-Order Cone Programming\si{\footnotetext{\acknowledgementstext}}}

\si{
	\author{
	Roberto Andreani \thanks{Department of Applied Mathematics, University of Campinas, Campinas, SP, Brazil. 
		Email: {\tt andreani@unicamp.br}}
	\and
	Gabriel Haeser \thanks{Department of Applied Mathematics, University of S{\~a}o Paulo, S{\~a}o Paulo, SP, Brazil. 
		Emails: {\tt \{ghaeser,leokoto,thiagops\}@ime.usp.br}}
	\and 
	Leonardo M. Mito \footnotemark[2]
	\and
	H{\'e}ctor Ram{\'i}rez C. \thanks{Departamento de Ingenier{\'i}a Matem{\'a}tica and Centro de Modelamiento Matem{\'a}tico (CNRS IRL2807), Universidad de Chile, Santiago, Chile.
	Email: {\tt hramirez@dim.uchile.cl}}
	\and
	Thiago P. Silveira \footnotemark[2]
	}
}

\jo{
	\titlerunning{Global Convergence of Algorithms Under Constant Rank Conditions for NSOCP}


	\author{
	Roberto Andreani 
	\and
	Gabriel Haeser
	\and
	Leonardo M. Mito
	\and		
	H{\'e}ctor Ram{\'i}rez C.
	\and
	Thiago P. Silveira
	}

	\institute{
				Roberto Andreani \at
              	Department of Applied Mathematics, University of Campinas \\
              	Campinas, SP, Brazil\\
             	 andreani@unicamp.br
           \and
              	Gabriel Haeser \at
              	Department of Applied Mathematics, University of S{\~a}o Paulo \\
              	S{\~a}o Paulo, SP, Brazil \\
              	ghaeser@ime.usp.br
		   \and
              	Leonardo M. Mito \at
              	Department of Applied Mathematics, University of S{\~a}o Paulo \\
              	S{\~a}o Paulo, SP, Brazil \\
              	leokoto@ime.usp.br    
           \and
              	H{\'e}ctor Ram{\'i}rez C. \at
              	Departamento de Ingenier{\'i}a Matem{\'a}tica and Centro de Modelamiento Matem{\'a}tico, Universidad de Chile \\
              	Santiago, Chile \\
              	hramirez@dim.uchile.cl
           \and
              	Thiago P. Silveira \at
              	Department of Applied Mathematics, University of S{\~a}o Paulo \\
              	S{\~a}o Paulo, SP, Brazil \\
              	thiagops@ime.usp.br
	}

\date{Received: date / Accepted: date}

}

\ima{
	\shorttitle{Global Convergence of Algorithms Under Constant Rank Conditions for NSOCP}

\author{%
{\sc Roberto Andreani}\thanks{Email: andreani@unicamp.br}\\[2pt]
Department of Applied Mathematics, University of Campinas, Rua S{\'e}rgio Buarque de Holanda, 651, 13083-859, Campinas, SP, Brazil.\\[6pt]
{\sc
Gabriel Haeser\thanks{Email: ghaeser@ime.usp.br},
Leonardo M. Mito\thanks{Corresponding author. Email: leokoto@ime.usp.br} 
and 
Thiago P. Silveira\thanks{Email: thiagops@ime.usp.br}
}\\[2pt]
Department of Applied Mathematics, University of S{\~a}o Paulo, Rua do Mat{\~a}o, 1010, Cidade Universit{\'a}ria, 05509-090, S{\~a}o Paulo, SP, Brazil.\\[6pt]
{\sc and}\\[6pt]
{\sc H{\'e}ctor Ram{\'i}rez C.}\thanks{Email: hramirez@dim.uchile.cl}\\[2pt]
Departamento de Ingenier{\'i}a Matem{\'a}tica and Centro de Modelamiento Matem{\'a}tico (CNRS IRL2807),  Universidad de Chile, Beauchef 851, Santiago, Chile.
}
	
\shortauthorlist{R. Andreani \emph{et al.}}
}


\newcommand{\abstracttext}{
	In [R.~Andreani, G.~Haeser, L.~M. Mito, H.~Ram{\'i}rez~C., Weak notions of nondegeneracy in nonlinear semidefinite programming, \jo{\linebreak}arXiv:2012.14810, 2020] the classical notion of nondegeneracy (or transversality) and Robinson's constraint qualification have been revisited in the context of nonlinear semidefinite programming exploiting the structure of the problem, namely, its eigendecomposition. This allows formulating the conditions equivalently in terms of (positive) linear independence of significantly smaller sets of vectors. In this paper we extend these ideas to the context of nonlinear second-order cone programming. For instance, for an $m$-dimensional second-order cone, instead of stating nondegeneracy at the vertex as the linear independence of $m$ derivative vectors, we do it in terms of several statements of linear independence of $2$ derivative vectors. This allows embedding the structure of the second-order cone into the formulation of nondegeneracy and, by extension, Robinson's constraint qualification as well. This point of view is shown to be crucial in defining significantly weaker constraint qualifications such as the constant rank constraint qualification and the constant positive linear dependence condition. Also, these conditions are shown to be sufficient for guaranteeing global convergence of several algorithms, while still implying metric subregularity and without requiring boundedness of the set of Lagrange multipliers.
}

\maketitle

\si{
	\abstract{
		\abstracttext
		
		\
		
		\textbf{Keywords:} Second-order cone programming, Constraint qualifications, Algorithms, Global convergence, Constant rank.
	}
}

\jo{
	\begin{abstract}
		\abstracttext
	\end{abstract}
	\keywords{
	  Second-order cone programming \and Constraint qualifications \and Algorithms \and Global convergence \and Constant rank.
	}
	\subclass{ 90C46 \and 90C30}
}

\ima{
	\begin{abstract}
		{\abstracttext}
		{Second-order cone programming, Constraint qualifications, Algorithms, Global convergence, Constant rank.}
	\end{abstract}
}

%
%
%
%
%
%
%
%
%
%
%
%

\section{Introduction}\label{sec:intro}

The well-known \textit{constant rank constraint qualification} (CRCQ) was introduced by \si{Janin}\jo{Janin}~\cite{janin}, for \textit{nonlinear programming} (NLP), with the purpose of obtaining a formula for the Hadamard directional derivative of the value function. Prior to his work, similar results wheren known under the \textit{Mangasarian-Fromovitz constraint qualification} (MFCQ)\ima{ (see}~\cite{gauvindubeau,rockavalue}\ima{)} and the \textit{linear independence constraint qualification} (LICQ)\ima{ (see also}~\cite{gauvindubeau}\ima{)}.

Janin also showed that CRCQ neither implies nor is implied by MFCQ and, moreover, that CRCQ is strictly weaker than LICQ. After that, CRCQ has been widely employed in the NLP literature for instance in the study of stability \ima{(}\cite{outratampec1,tilt,outrata1}\ima{)}, strong second-order necessary optimality conditions \cite{aes2}, global convergence of algorithms \ima{(}\cite{abms}\ima{)}, among other applications. We remark that CRCQ explains in a very simple way the existence of Lagrange multipliers associated with affine constraints, such as in linear programming.

More recently, \si{Qi and Wei}\jo{Qi and Wei}~\cite{qiwei} presented a condition called \textit{constant positive linear dependence} (CPLD), which is strictly weaker than both MFCQ and CRCQ, and showed its application on the convergence of a general \textit{sequential quadratic programming} (SQP) method for NLP. However, they did not prove that CPLD was a constraint qualification at the time. This issue was settled in a later article by \si{Andreani et al.}\jo{Andreani et al.}~\cite{Andreani2005}, where they proved that CPLD implies the \textit{quasinormality} constraint qualification condition. Later, in~\cite{abms}, the convergence of an \textit{augmented Lagrangian} method was also proved under CPLD. Other uses of constant rank-type constraint qualifications in NLP are discussed, for instance, in~\cite{ahss12,Andreani2012,janin,rcr,param} and their references. In particular, the appropriate way of incorporating equality constraints in the definitions of CRCQ and CPLD are discussed respectively in \cite{rcr} and \cite{ahss12}.

Although constraint qualifications with applications towards convergence of algorithms are largely studied in NLP, the situation is quite different in \textit{nonlinear second-order cone programming} (NSOCP), despite its many relevant applications -- for example, in structural optimization and machine learning~\ima{ (}\cite{sosvm}\ima{)}, hydroacoustic classification of fishes~\ima{ (}\cite{fishes}\ima{)}, and others~\ima{ (}\cite{boydsocsurvey}\ima{)}. In NSOCP, this role is almost always covered by the so-called \textit{nondegeneracy} condition (c.f. \cite[Equation 25]{bonnansramirez}) and \textit{Robinson's constraint qualification} (Robinson's CQ) (c.f. \cite[Equation 29]{bonnansramirez}), which can be seen as natural generalizations of LICQ and MFCQ, respectively. The first work that attempted to extend CRCQ and its variants to the context of NSOCP is due to \si{Zhang and Zhang}\jo{Zhang and Zhang}~\cite{ZZ}, but their condition was invalidated by a counter-example given by \si{Andreani et al. in}\jo{Andreani et al. in}~\cite{ZZerratum}. Later, a ``naive approach'' to extend some constant rank-type constraint qualifications for NSOCP was presented by \si{Andreani et al. in}\jo{Andreani et al. in}~\cite{crcq-naive}; the adjective ``naive'' refers to the fact that some of the conic constraints were locally rewritten as NLP constraints whenever possible, yielding a new reformulated problem with mixed constraints, and then a hybrid condition between the NLP versions of CRCQ/CPLD and nondegeneracy/Robinson's CQ was presented. The major contribution of~\cite{crcq-naive} is to show an effective way of dealing with those two distinct types of constraints via sequences of approximate stationary points.

Recently, we proposed in \cite{facial-crcq} a new geometrical characterization of CRCQ for NLP using the faces of the non-negative orthant, which was naturally extended to the context of NSOCP as well as \textit{nonlinear semidefinite programming} (NSDP). This has led us to an alternative constant rank-type constraint qualification that allowed us to derive strong second order optimality conditions for NSDP and NSOCP without assuming compactness of the Lagrange multiplier set, similarly to what is known in NLP~\ima{(}\cite{aes2}\ima{)}. However, no application towards algorithms was provided or suggested in~\cite{facial-crcq}. Since the sequential approach from~\cite{crcq-naive} seems more suitable for algorithms, we developed it even further for NSDP problems\ima{ in}~\cite{seq-crcq-sdp,weaksparsecq} by directly exploiting the eigenvector structure of the problem, overcoming the limitations of the naive approach.

This paper introduces new constraint qualifications for NSOCP problems following similar ideas to those used in~\cite{seq-crcq-sdp} and~\cite{weaksparsecq}, but taking into account the structure of the second-order cone. For such, we will first introduce weak variants of the nondegeneracy condition and Robinson's CQ -- here called \textit{weak-nondegeneracy} and \textit{weak-Robinson's CQ} -- which are weaker than their original versions but that still reduce to LICQ and MFCQ, respectively, when an NLP problem is modelled as an NSOCP problem with several one-dimensional constraints. Moreover, we show that weak-nondegeneracy is strictly weaker than nondegeneracy, and we also clarify some relations between weak-nondegeneracy (weak-Robinson's CQ) and standard nondegeneracy (Robinson's CQ), which were only partially addressed in~\cite{seq-crcq-sdp}. In particular, we show a new characterization of nondegeneracy in terms of the validity of weak-nondegeneracy plus the linear independence of a partial Jacobian of the constraints. The relationship of weak-Robinson's CQ and Robinson's CQ is also partially settled in our Theorem \ref{socp:wrob_rob_sep}, which was left as an open problem for NSDP in \cite{weaksparsecq}. With these new constraint qualifications at hand, we introduce new extensions of CRCQ and CPLD for NSOCP, which also recover their counterparts in NLP. We also discuss a mild adaptation of these new conditions that can be adopted with the purpose of proving global convergence results for algorithms that keep track of Lagrange multiplier estimates. 

The structure of this paper is as follows: in Section~\ref{sec:prelim} we present some notation and technical results. Sections~\ref{sec:socp} and~\ref{sec:cr} present weak constraint qualifications for NSOCP: weak-nondegeneracy condition, weak-Robinson's CQ, and two weak constant rank conditions. Also, we present some of their properties and a detailed comparison with other constraint qualifications from the literature, and among themselves. In Section~\ref{sec:appl} we introduce perturbed versions of the constant rank conditions of Section~\ref{sec:cr}, and we present some algorithms related to them. We state the relationship between these perturbed variants and the so-called \textit{metric subregularity CQ}. Finally, in Section~\ref{sec:conclusion}, we summarize our results and discuss some ideas for future research.


\section{Preliminaries}\label{sec:prelim}

In this section, we will present our notation, and some linear algebra and convex analysis tools needed for deriving the results of this paper.

\subsection{Basic Results and Some Notation}

For a given differentiable function $F\colon \R^n\to \R^m$, we denote the \textit{Jacobian} matrix of $F$ at a point $x\in \R^n$ by $DF(x)$; and the ${j}$-th column of its \textit{transpose}, $DF(x)^\T$, will be denoted by $\nabla F_{j}(x)$. We also adopt the usual inner product in $\R^m$, given by $\langle y,z\rangle\doteq \sum_{{j}=1}^m y_{j} z_{j}$, along with the \textit{Euclidean norm} $\|y\|\doteq \sqrt{\langle y,y\rangle}$, for every $y,z\in \R^m$. The open ball (respective to the Euclidean norm) that has center at $y$ and radius $\delta\geq 0$ will be denoted by $B(y,\delta)$, and its closure, by $\cl(B(y,\delta))$. 

The orthogonal projection of a given $y\in\R^m$ onto a nonempty closed convex set $C\subseteq \R^m$ with respect to the Euclidean norm is defined as
\[
	\mathcal{P}_C(y)\doteq \argmin_{z\in C} \|z-y\|.
\]
It is valid to mention that $\mathcal{P}_C(y)$ is well-defined as a continuous function of $y$, since $C$ is closed and convex. Also, when $C$ is given by the Cartesian product of other non-empty closed convex sets $C_1,\ldots,C_q$, where $C_{j}\subseteq \R^{m_{j}}$ for every ${j}\in \{1,\ldots,q\}$, then for any $y\doteq(y_1,\ldots,y_q)\in \R^{m_1+\ldots+m_q},$ we have
\[
	\mathcal{P}_C(y)=\left( \mathcal{P}_{C_1}(y_1), \ldots, \mathcal{P}_{C_q}(y_q) \right).
\]

To relate our results with the classical ones from the literature, we will make use of a notion of \textit{conic linear independence}, defined as follows:
\begin{definition}\label{socp:conicli}
Let $C\subseteq \R^{m}$ be a nonempty closed convex cone. A matrix $M\in \R^{n\times m}$ is said to be \emph{$C$-linearly independent} if there is no non-zero $v\in C$ such that $Mv=0$.  
\end{definition}

Roughly speaking, Definition~\ref{socp:conicli} describes ``injectivity over $C$''. In particular, if $C$ is the nonnegative orthant \[\R^m_+\doteq \{y\in \R^m\colon \forall i\in \{1,\ldots,m\}, \ y_i\geq 0\},\]
then Definition~\ref{socp:conicli} reduces to a concept known in NLP as \textit{positive linear independence} of the columns of $M$. Now, let us show a simple characterization of conic linear independence in terms of all finitely generated conical slices of the cone. 

\begin{lemma}\label{lem:conicli}
Let $C\subseteq \R^m$ be a closed convex cone such that there exists a (possibly infinite) index set $S$ and, for each $w\in S$, a finite subset $\mathcal{E}_{w}\subseteq C$ whose elements are linearly independent, such that
\begin{equation}\label{eq:conicslices}
	C=\bigcup_{w\in S} \textnormal{cone}(\mathcal{E}_{w}),
\end{equation}
where $\textnormal{cone}(\E_{w})$ denotes the conic hull of $\E_{w}$. Then, a matrix $M\in \R^{n\times m}$ is $C$-linearly independent if, and only if, the family $\{Mv\}_{v\in \E_w}$ is positively linearly independent, for every fixed $w\in S$.
\end{lemma}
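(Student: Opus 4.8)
The plan is to prove both implications directly, using the fact that the linear independence of each $\mathcal{E}_w$ turns membership in $\textnormal{cone}(\mathcal{E}_w)$ into a representation with \emph{unique} nonnegative coefficients, while the covering $C=\bigcup_{w\in S}\textnormal{cone}(\mathcal{E}_w)$ lets me test $C$-linear independence one conical slice at a time. Throughout, I would write a generic element of $\textnormal{cone}(\mathcal{E}_w)$ as $u=\sum_{v\in\mathcal{E}_w}\lambda_v v$ with each $\lambda_v\geq 0$, and I would note from the covering that $\textnormal{cone}(\mathcal{E}_w)\subseteq C$ for every $w\in S$.

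For the implication ($\Rightarrow$), I would assume $M$ is $C$-linearly independent and fix $w\in S$. Suppose $\sum_{v\in\mathcal{E}_w}\lambda_v Mv=0$ with each $\lambda_v\geq 0$. Setting $u\doteq\sum_{v\in\mathcal{E}_w}\lambda_v v$, I have $u\in\textnormal{cone}(\mathcal{E}_w)\subseteq C$ and $Mu=\sum_{v}\lambda_v Mv=0$; hence $u=0$ by $C$-linear independence. Since the elements of $\mathcal{E}_w$ are linearly independent, $u=0$ forces $\lambda_v=0$ for all $v\in\mathcal{E}_w$, which is exactly positive linear independence of $\{Mv\}_{v\in\mathcal{E}_w}$.

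For the implication ($\Leftarrow$), I would argue by contraposition: assume $M$ is not $C$-linearly independent, so there is a nonzero $u\in C$ with $Mu=0$. By the covering hypothesis, $u\in\textnormal{cone}(\mathcal{E}_w)$ for some $w\in S$, so $u=\sum_{v\in\mathcal{E}_w}\lambda_v v$ with $\lambda_v\geq 0$; because $u\neq 0$, the coefficients $\lambda_v$ are not all zero. Applying $M$ gives $\sum_{v\in\mathcal{E}_w}\lambda_v Mv=Mu=0$ with nonnegative, not-all-zero coefficients, contradicting positive linear independence of $\{Mv\}_{v\in\mathcal{E}_w}$. Hence positive linear independence on every slice forces $M$ to be $C$-linearly independent.

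I do not expect a genuine obstacle here: both directions reduce to the elementary equivalence $u=0\iff(\lambda_v)_{v\in\mathcal{E}_w}=0$, granted by the linear independence of $\mathcal{E}_w$. The only point requiring mild care is that the family $\{Mv\}_{v\in\mathcal{E}_w}$ is indexed by $v\in\mathcal{E}_w$ and may have coincident members even though the $v$'s are distinct; this causes no issue provided positive linear independence is read, as usual, as a statement about the indexed family of coefficients rather than about the set of image vectors.
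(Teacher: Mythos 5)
Your proof is correct and follows essentially the same route as the paper's: in the forward direction both arguments form $u=\sum_{v\in\E_w}\lambda_v v\in C$, use $C$-linear independence to get $u=0$, and then invoke the linear independence of $\E_w$ to kill the coefficients; in the reverse direction both locate a candidate $u\in C$ with $Mu=0$ inside some slice $\textnormal{cone}(\E_w)$ and apply positive linear independence there (you phrase this as a contraposition, the paper argues directly, but the content is identical). No gaps.
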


\begin{proof}
Suppose that $M$ is $C$-linearly independent, let $w\in S$ be arbitrary, and let $a_{v}\in \R_+$, $v\in \E_{w}$, be scalars such that
\begin{equation}\label{aux:eq1}
	\sum_{v\in \E_{w}}  a_{v}M v=M\left[\sum_{v\in \E_{w}} a_{v} v\right]=0.
\end{equation}
Since $C$ is a convex cone, it follows that $\tilde{v}\doteq \sum_{v\in \E_{w}} a_{v} v$ belongs to $C$, so $\tilde{v}=0$ by hypothesis; and from the linear independence of $\E_{w}$ we have that $a_{v}=0$ for every $v\in \E_{w}$. Thus, $\{Mv\}_{v\in \E_w}$ is positively linearly independent. 

Conversely, assume that $\{Mv\}_{v\in \E_w}$ is positively linearly independent, and let $\tilde{v}\in C$ be such that $M\tilde{v}=0$. Then, there is some $w\in S$ such that $\tilde{v}\in \textnormal{cone}(\E_{w})$; that is, there exist some scalars $a_{v}\geq 0$, $v\in \E_w$, such that $\tilde{v}=\sum_{v\in \E_{w}} a_{v}v$ and hence~\eqref{aux:eq1} holds, implying that $a_{v}=0$ for all $v\in \E_w$; thus $\tilde{v}=0$.
\jo{\qed} \end{proof}

\begin{remark}\label{remarklemma}Considering $C=\R^m$ in the statement of the Lemma and replacing the conic hull by the linear span in \eqref{eq:conicslices}, we arrive similarly at a characterization of the linear independence of the columns of $M$ in terms of the linear independence of the family $\{Mv\}_{v\in\mathcal{E}_w}$, for every fixed $w\in S$.\end{remark}

A simple example to fix ideas on how to use Lemma~\ref{lem:conicli} is to take the parametric representation of $\R^2$:
\begin{equation}\label{eq:r2circle}
	\begin{aligned}
	\R^2 & =\{(r\cos(w),r\sin(w))\colon w\in [0,2\pi], r\geq 0\} \jo{\\ &} =\bigcup_{w\in [0,2\pi]} \textnormal{cone}((\cos(w),\sin(w)))
	\end{aligned}
\end{equation}
so we have $C=\R^2$, $S=[0,2\pi]$, and $\E_w=\{(\cos(w),\sin(w))\}, w\in S$. In this case Lemma~\ref{lem:conicli} simply states the trivial fact that a matrix $M\in \R^{n\times 2}$ is injective if, and only if, $M(\cos(w),\sin(w))\neq 0$ for every $w\in [0,2\pi]$. Moreover, the main object of our study, the \textit{second-order cone} (or \textit{Lorentz cone}):
\[
	{\Lor_{m}}\doteq \left\{
		\begin{array}{ll}
			\{y\doteq(y_0,\widehat{y})\in \R\times \R^{m-1}\colon y_0\geq \norm{\widehat{y}}\}, & \textnormal{ if } m>1,\\
			\R_+, & \textnormal{ if } m=1,
		\end{array}
	\right.
\]
may benefit from Lemma~\ref{lem:conicli} as well, since it can be written as
\[
	\Lor_m=\bigcup_{\substack{w\in\R^{m-1}\\ \|w\|=1}} \textnormal{cone}(\{(1,-w),(1,w)\}),
\]
which corresponds to $S=\{w\in\R^{m-1}\colon \|w\|=1\}$ and $\E_w=\{(1,-w),(1,w)\}$. In this case Lemma~\ref{lem:conicli} states that a matrix $M\in \R^{n\times m}$ is $\Lor_m$-linearly independent if, and only if, the vectors
\begin{equation}\label{eq:vecexpli}
	M(1,-w) \quad\textnormal{and}\quad M(1,w)
\end{equation}
are positively linearly independent for every $w\in \R^{m-1}$ such that $\|w\|=1$. Furthermore, the standard notion of linear independence in $\R^m$ can also be stated in terms of the conical slices of $\Lor_m$, since it is a full-dimensional cone; indeed, observe that
\[
	\R^m=\bigcup_{\substack{w\in\R^{m-1}\\ \|w\|=1}} \textnormal{span}(\{(1,-w),(1,w)\}),
\]
where $\textnormal{span}(\{(1,-w),(1,w)\})$ denotes the \textit{linear span} of the vectors $(1,-w)$ and $(1,w)$; then, the matrix $M$ is $\R^m$-linearly independent (i.e., injective) if, and only if, the vectors~\eqref{eq:vecexpli} are linearly independent for every $w\in \R^{m-1}$ such that $\|w\|=1$. Thus, we have replaced the linear independence of the $m$ columns of $M$ by a series of linear independence requirements of only $2$ parameterized vectors in \eqref{eq:vecexpli}, independently of the size of $m$. With this point of view, we will be able to exploit the structure of the second-order cone, which will turn out to be essential in our analysis.

Furthermore, observe that Lemma~\ref{lem:conicli} can be applied to products of closed convex cones $C=\prod_{j\in J} C_j$, where $J$ is an index set, in order to describe $C$-linear independence of a family of matrices $\{M_j\}_{j\in J}$ mounted into a conveniently indexed block matrix
\begin{equation}\label{eq:Mj}
M \doteq \left[ \begin{array}{ccc}
	& \vdots &\\
     & M_j & \\ 
     & \vdots &
\end{array} \right]_{{j} \in J} 
\end{equation}
therefore, we will abuse the terminology to define the $C$-linear independence of the family $\{M_j\}_{j\in J}$ in terms of the above $M$ throughout the paper.

To close this subsection, let us briefly recall the celebrated \emph{Carath{\'e}odory's Lemma}~\cite[Exercise B.1.7]{bertsekasnl} from convex analysis:

\begin{lemma}[Carath{\'e}odory's Lemma]\label{lem:carath} Let $y_1,\dots,y_{p}\in\mathbb{R}^n$, and let \si{$\alpha_1,\ldots,\alpha_{p}\in \R$} \jo{$\alpha_1, \linebreak \ldots,\alpha_{p}\in \R$} be arbitrary. Then,
there exists some $J \subseteq  \{1, \ldots, p \}$ and some scalars $\tilde{\alpha}_{j}$ with ${j}\in J$, such that $ \{y_{j} \}_{{j}\in J} $ is linearly independent,
\[
	 \sum_{{j}=1}^{p} \alpha_{j} y_{j} = \sum_{{j}\in J} \tilde{\alpha}_{j} y_{j},
\]
and $\alpha_{j}\tilde{\alpha}_{j}>0$, for all ${j}\in J$.
\end{lemma}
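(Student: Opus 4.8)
The plan is to establish the statement by repeatedly reducing the number of vectors that carry a nonzero coefficient, one at a time, until the surviving vectors become linearly independent, all the while preserving both the value of the linear combination and the sign of each coefficient. First I would discard every index $j$ with $\alpha_j = 0$, since such terms contribute nothing to the sum and need not appear in $J$; so without loss of generality I may assume $\alpha_j \neq 0$ for all $j\in\{1,\ldots,p\}$. If the family $\{y_1,\ldots,y_p\}$ is already linearly independent, I simply take $J=\{1,\ldots,p\}$ and $\tilde{\alpha}_j=\alpha_j$, and there is nothing left to prove.

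The heart of the argument is the dependent case. If $\{y_1,\ldots,y_p\}$ is linearly dependent, there are scalars $\beta_1,\ldots,\beta_p$, not all zero, with $\sum_{j=1}^p \beta_j y_j = 0$. For any $t\in\R$ the vector $\sum_{j=1}^p (\alpha_j - t\beta_j)y_j$ equals $\sum_{j=1}^p \alpha_j y_j$, so the combination is invariant under replacing each $\alpha_j$ by $\alpha_j - t\beta_j$; the idea is then to move $t$ away from $0$ until the first coefficient hits zero. Writing $c_j \doteq \beta_j/\alpha_j$, the sign-preservation requirement $\alpha_j(\alpha_j - t\beta_j)>0$ is equivalent to $t c_j < 1$, while a coefficient vanishes exactly when $t c_j = 1$. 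Since some $\beta_j\neq 0$ (and the corresponding $\alpha_j\neq 0$), at least one $c_j$ is nonzero; choosing $t^\ast = \min\{1/c_j : c_j>0\}$ when some $c_j>0$, and $t^\ast = \max\{1/c_j : c_j<0\}$ otherwise, I obtain a value at which at least one coefficient $\alpha_j - t^\ast\beta_j$ becomes zero while every surviving coefficient satisfies $\alpha_j(\alpha_j - t^\ast\beta_j)>0$. This produces a strictly shorter representation with the same sum and the same signs.

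To finish, I would iterate this reduction: each application strictly decreases the number of nonzero coefficients and keeps every surviving coefficient of the same sign as the original $\alpha_j$ (the signs chain through successive steps), so after finitely many steps the remaining vectors are linearly independent; taking $J$ to be their index set and $\tilde{\alpha}_j$ the final coefficients yields the conclusion. Equivalently, and somewhat more cleanly, I would package this as a minimal-support argument: among all coefficient vectors $\gamma$ with $\sum_j \gamma_j y_j = \sum_j \alpha_j y_j$ and $\gamma_j$ of the same sign as $\alpha_j$ for every $j$, I pick one, $\tilde{\alpha}$, whose support is smallest; the reduction step shows that if the vectors indexed by the support of $\tilde{\alpha}$ were dependent, one could shrink the support further, a contradiction, so they must be linearly independent.

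I expect the only delicate point to be the choice of the step length $t^\ast$: one must check that the selected $t^\ast$ simultaneously annihilates at least one coefficient and preserves the strict sign condition for all the others, which is precisely the min-ratio verification above. One must also be careful that the sign condition is stated relative to the original $\alpha_j$ rather than the intermediate coefficients; this is exactly what makes the minimal-support phrasing convenient, since it fixes the reference signs once and for all and removes the need to track the sign chaining across iterations.
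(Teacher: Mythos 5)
Your proof is correct: the reduction step (shifting the coefficients along a linear dependence relation until one vanishes, with the min-ratio choice of $t^\ast$ guaranteeing both annihilation of a coefficient and strict sign preservation for the survivors) is exactly the classical exchange argument, and your remark about chaining the signs across iterations, or equivalently passing to a minimal-support representation, closes the only delicate point. Note that the paper itself gives no proof of this lemma --- it is quoted from Bertsekas (Exercise B.1.7) --- and your argument coincides with that standard proof, so there is nothing to reconcile.
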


\subsection{The Nonlinear Second-Order Cone Programming Problem}

A (multifold) nonlinear second-order cone programming problem is usually stated in the form:
\begin{equation}\label{NSOCP}
  \tag{NSOCP}
  \begin{aligned}
    & \underset{x \in \mathbb{R}^{n}}{\text{Minimize}}
    & & f(x), \\
    & \text{subject to}
    & & g_{j}(x) \in  {\Lor_{m_{j}}}, \ \forall {j}\in\{1,\ldots,q\},\\
  \end{aligned}
\end{equation}
where $f\colon \R^n\to \R$ and $g_{j}\colon \R^n\to \R^{m_{j}}$ are continuously differentiable functions, for all ${j}\in \{1,\ldots,q\}$, and $\Lor_{m_j}$ is a second-order cone in $\R^{m_{j}}$. As usual, for a point $x\in\R^n$ we denote $g_j(x)=(g_{j,0}(x),\widehat{g}_j(x))\in\R\times\R^{m_j-1}$. The feasible set of (\ref{NSOCP}) will be denoted by $\F$. Also, we denote the \textit{interior} and the \textit{boundary excluding the origin} of $\Lor_{m_{j}}$ by $\int \Lor_{m_{j}}$ and $\bd_+ \Lor_{m_{j}}$, respectively; and as usual in the study of NSOCP, for any $x\in \F$ we partition $\{1,\ldots,q\}$ as follows:
\begin{equation}
  \label{socp:index}
  \begin{array}{r@{\:}c@{\:}l}
    I_0(x) & \doteq   & \{ {j}\in \{1,\ldots,q\} \colon 
    g_{j}(x) = 0 \}, \\
    I_B(x) & \doteq   & \{{j}\in \{1,\ldots,q\} \colon 
    g_{j}(x) \in \bd_+{\Lor_{m_{j}}} \},\\
    I_{\int}(x) & \doteq   & \{ {j}\in \{1,\ldots,q\} \colon 
    g_{j}(x) \in \int {\Lor_{m_{j}}} \}.
  \end{array}
\end{equation}

Following~\cite[Section 4]{surveysocp}, we recall that if $m_{j}>1$, then every $y\in\R^{m_{j}}$ has a \textit{spectral decomposition} with respect to ${\Lor_{m_{j}}}$, in the form
\[
	y=\lambda_1(y)u_1(y)+\lambda_2(y)u_2(y),
\]
where
\begin{equation}\label{socp:autovals}
	\lambda_i(y)\doteq y_0+(-1)^i\norm{\widehat{y}} \quad \mand \quad u_i(y)\doteq \left\{
	\begin{aligned}
		\frac{1}{2}\left(1,(-1)^i\frac{\widehat{y}}{\norm{\widehat{y}}}\right), & \quad \textnormal{ if } \widehat{y}\neq 0,\\
		\frac{1}{2}\left(1,(-1)^i w\right), & \quad \textnormal{ otherwise},
	\end{aligned}
	\right.
\end{equation}
and $w\in \R^{m_{j}-1}$ can be any unitary vector, with $i\in \{1,2\}$. In this setting, $\lambda_i(y)$ is said to be an eigenvalue of $y$ associated with the eigenvector $u_i(y)$, $i\in \{1,2\}$. By definition, we see that $y \in \Lor_{m_{{j}}}$ if, and only if, $\lambda_{1}(y)\geq 0, \lambda_{2}(y) \geq 0$, whence follows that the orthogonal projection of $y$ onto $\Lor_{m_{j}}$ can be characterized as
\[
	\mathcal{P}_{{\Lor_{m_{j}}}}(y)=[\lambda_1(y)]_+u_1(y) + [\lambda_2(y)]_+u_2(y),
\]
where $[ \ \cdot \ ]_+\doteq \max\{ \ \cdot \ , 0\}$.
\begin{remark}\label{socp:remark0}
From this point onwards, we will assume that $m_{j}>1$ for every ${j}\in\{1,\ldots,q\}$. The reason to do this is that if $m_{j}=1$, then $g_{j}\in \Lor_{m_{j}}$ is a standard NLP constraint, which should be treated separately in our approach, together with equality constraints; we should remark that our approach is very friendly to this kind of mixed constraints, since it is based on~\cite{crcq-naive}. In particular, inclusion of equality constraints can be done in the way suggested in \cite{rcr} and \cite{ahss12}. Therefore, to avoid cumbersome notation, we will omit both types of NLP constraints in this paper.
Alternatively, the spectral decomposition of $y\in \Lor_1$ could be interpreted as $y=\lambda_1(y)u_1(y)$, with $u_1(y)=1$ and $\lambda_1(y)=y$. From this point of view, the definitions and theorems of this paper can be adjusted to fit the case $m_{j}=1$ by simply disregarding all expressions involving $\lambda_2(y)$ and $u_2(y)$. 
\end{remark}

Let $\xb\in \F$. The well-known \textit{Karush-Kuhn-Tucker} (KKT) conditions for $\xb$ consist of the existence of \textit{Lagrange multipliers} $\bar{\mu}_{j}\in \Lor_{m_{j}}$, ${j}\in\{1,\ldots,q\}$, such that
\begin{equation}
	\begin{aligned}
		\nabla_x L(\xb,\bar{\mu}_1,\ldots,\bar{\mu}_q) = 0, &\\
		\langle \bar{\mu}_{j}, g_{j}(\xb) \rangle = 0, & \quad \forall {j}\in \{1,\ldots,q\},
	\end{aligned}
\end{equation}
where
\[
	L(x,\mu_1,\ldots,\mu_q)\doteq f(x)-\sum_{{j}=1}^q \langle\mu_{j}, g_{j}(x) \rangle.
\]

It is known that not every local minimizer satisfies the KKT conditions, unless a constraint qualification is present. The most prominent constraint qualifications in the literature are the nondegeneracy CQ and Robinson's CQ, which we recall next as characterized\footnote{See~\cite[Proposition 19]{bonnansramirez} for the characterization of nondegeneracy. The characterization of Robinson's CQ follows from~\cite[Proposition 2.97 and Corollary 2.98]{bshapiro} using the fact $\langle y_{j}, g_{j}(\xb) \rangle=0$ with ${j}\in I_B(\xb)$ if, and only if, $y_{j}=\alpha \Gamma_{j} g_{j}(\xb)$ for some $\alpha\geq 0$; and similarly, $\langle y_{j}, g_{j}(\xb) \rangle=0$ with ${j}\in I_{\int}(\xb)$ if, and only if, $y_{j}=0$~\cite[Lemma 15]{surveysocp}.} in the work of Bonnans and Ram{\'i}rez~\cite{bonnansramirez}.
\begin{definition}\label{socp:ndgclass}
	A point $\xb\in \F$ satisfies 
	
	\begin{itemize}
		\item \emph{Nondegeneracy} if the family 
	\begin{equation}\label{socp:setndgclass}
		\left\{Dg_{j}(\xb)^\T \Gamma_{j} g_{j}(\xb)\right\}_{{j}\in I_B(\xb)}\bigcup \left\{Dg_{{j}}(\xb)^\T\right\}_{{j}\in I_0(\xb)}
	\end{equation}
	is $\R^{|I_B(\xb)|}\times \prod_{{j}\in I_0(\xb)}\R^{m_{j}}$-linearly independent;
	\item \emph{Robinson's CQ} if the family~\eqref{socp:setndgclass} is $\R^{|I_B(\xb)|}_+\times \prod_{{j}\in I_0(\xb)} \Lor_{m_{j}}$-linearly independent;
	\end{itemize}
	 where
	\begin{equation}\label{eq:gamma}
		\Gamma_{j}\doteq \begin{bmatrix}
		1 & 0 \\
		0 & -\I_{m_{j}-1}
		\end{bmatrix}
	\end{equation}
	and $\I_{m_{j}-1}$ is the identity matrix of dimension $m_{j}-1$.
\end{definition}

As mentioned in the introduction, the nondegeneracy condition reduces to LICQ from NLP when it is seen as an instance of~\eqref{NSOCP} with $m_1=\ldots=m_q=1$, while Robinson's CQ reduces to MFCQ in the same process.


\section{Weak Constraint Qualifications for NSOCP}
\label{sec:socp}

From the practical point of view, one of the standard strategies for proving first-order global convergence of iterative algorithms is proving that every feasible limit point $\xb$ of the sequence $\seq{x}$ of its iterates fulfills the KKT conditions whenever a given CQ holds. Roughly speaking, this means that the algorithm surely avoids all non-optimal points that satisfy the CQ but violate KKT; hence, building this reasoning under a more general (weaker) CQ means to narrow down the range of convergence of the method without removing optimal candidates from it. Moreover, it is well-known that the existence of Lagrange multipliers is a relevant issue beyond algorithms -- for example, in situations where they have some practical interpretation, such as in the electricity pricing context~\ima{(}\cite{lunasilvasaga}\ima{)} -- meaning there is also a theory-driven motivation for pursuing weaker constraint qualifications.

In this section, we will present weaker variants of nondegeneracy and Robinson's CQ, discuss some of their properties, and exemplify their usage with an external penalty method. Besides, these conditions shall pave the way for a more radical relaxation in terms of local constant rank, which will be discussed in the next section. A similar approach has been conducted in~\cite{seq-crcq-sdp,weaksparsecq} for NSDP problems, but although NSOCP can be seen as a particular case of NSDP via an arrowhead matrix transformation
\[
	(y_0,\widehat{y})\mapsto \textnormal{Arw}(y_0,\widehat{y})\doteq\left[\begin{array}{ccc|c}
	y_0 &  & & \\
	 & \ddots & & \widehat{y} \\
	 & & y_0 &\\
	 \midrule
	 & \widehat{y}^\T & & y_0
	\end{array}\right],
\] it should be noted that constraint qualifications are not necessarily carried over with the transformation; that is, when dealing with weak constraint qualifications, one generally loses information when the problem is equivalently rewritten differently (a noticeable exception is Robinson's CQ, which turns out the be quite robust in this sense). For instance, the nondegeneracy condition for NSDP is never satisfied by a constraint in the form
\[
	\textnormal{Arw}(g_0(x),\widehat{g}(x))\in \S^m_+\doteq \{M\in \R^{m\times m}\colon M=M^\T; \ \forall d\in \R^m, d^\T Md\geq 0\}
\]
when $m>2$, regardless of the fulfillment of nondegeneracy for NSOCP applied to the constraint \si{\linebreak}$(g_0(x),\widehat{g}(x))\in \Lor_m$. As it can be easily verified, the same conclusion holds for the constraint qualification called ``weak-nondegeneracy'' for NSDP that was introduced in~\cite{weaksparsecq}. Thus, a specialized analysis is required to obtain results similar to~\cite{seq-crcq-sdp,weaksparsecq}, for NSOCP. In fact, the analysis we present in this section regarding those weak conditions is, in a sense, more refined than the one presented in~\cite{weaksparsecq} since there are some important questions that were left open in~\cite{weaksparsecq}, which we are able to answer here.

\subsection{Parametric Bases and Weak-Nondegeneracy for NSOCP}

We open our studies by characterizing nondegeneracy and Robinson's CQ in terms of the eigenvectors of the constraint functions (as in~\eqref{socp:autovals}). To motivate it, let $g(x)\doteq (g_0(x),\widehat{g}(x))$ and $\xb\in \R^n$ be such that $g(\xb)=0$. Using Bonnans and Ramírez' characterization (Definition~\ref{socp:ndgclass}), we see that $\xb$ is \textit{nondegenerate} (that is, it satisfies nondegeneracy CQ) when the matrix $Dg(\xb)$ is surjective. This is clearly a representation of nondegeneracy in the canonical basis $e_1,\ldots,e_m$ of $\R^m$, where $e_i$ has $1$ in its $i$-th position and zeros elsewhere. Other representations of $\R^m$ may lead to different characterizations of these constraint qualifications; and this simple fact leads us a natural way of imbuing the structure of the cone into the conditions. 

For instance, the discussion after Lemma~\ref{lem:conicli} allows us to represent nondegeneracy and Robinson's CQ in terms of each slice of $\Lor_m$, as long as we consider all of them. More precisely: 
\begin{corollary}\label{socp:nondegen}
	Let $\xb$ be a feasible point of~\eqref{NSOCP}. Then:
	\begin{enumerate}
		\item Nondegeneracy holds at $\xb$ if, and only if, the family  of vectors
		\begin{equation}\label{socp:setndg}
			\left\{Dg_{j}(\xb)^\T u_1(g_{j}(\xb))\right\}_{{j}\in I_B(\xb)}\bigcup \left\{ Dg_{j}(\xb)^\T (1,-\bar{w}_{j}), \ Dg_{j}(\xb)^\T (1,\bar{w}_{j}) \right\}_{{j}\in I_0(\xb)}
		\end{equation}
		is linearly independent for every $\bar{w}_{j}\in \R^{m_j-1}$ such that $\norm{\bar{w}_{j}}=1$, ${j}\in I_0(\xb)$;
		
		\item Robinson's CQ holds at $\xb$ if, and only if, the family~\eqref{socp:setndg} is positively linearly independent for every $\bar{w}_{j}$ such that $\norm{\bar{w}_{j}}=1$, ${j}\in I_0(\xb)$.
	\end{enumerate}
\end{corollary}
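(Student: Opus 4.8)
The plan is to reduce both items to the conical-slice machinery of Lemma~\ref{lem:conicli} and Remark~\ref{remarklemma}, applied to the product cone that governs Definition~\ref{socp:ndgclass}, after first rewriting the $I_B(\xb)$-blocks of the family~\eqref{socp:setndgclass} through the eigenvector $u_1$.

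First I would deal with the indices $j\in I_B(\xb)$. Since $g_j(\xb)\in\bd_+\Lor_{m_j}$ forces $g_{j,0}(\xb)=\norm{\widehat g_j(\xb)}>0$, a one-line computation from the definitions of $\Gamma_j$ in~\eqref{eq:gamma} and of $u_1$ in~\eqref{socp:autovals} gives $\Gamma_j g_j(\xb)=2g_{j,0}(\xb)\,u_1(g_j(\xb))$, a strictly positive multiple. Hence each vector $Dg_j(\xb)^\T\Gamma_j g_j(\xb)$ in~\eqref{socp:setndgclass} is a positive scalar multiple of the corresponding vector $Dg_j(\xb)^\T u_1(g_j(\xb))$ in~\eqref{socp:setndg}; as scaling a vector by a positive constant alters neither linear nor positive linear independence, the two families may be used interchangeably in what follows.

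Next I would exhibit the conical slices of the governing product cone. For Robinson's CQ this cone is $\R^{|I_B(\xb)|}_+\times\prod_{j\in I_0(\xb)}\Lor_{m_j}$. Writing $\R_+=\textnormal{cone}(\{1\})$ on each $I_B(\xb)$-factor and using the slice decomposition $\Lor_{m_j}=\bigcup_{\norm{w_j}=1}\textnormal{cone}(\{(1,-w_j),(1,w_j)\})$ on each $I_0(\xb)$-factor, the product becomes the union, over all tuples $w=(w_j)_{j\in I_0(\xb)}$ of unit vectors, of the conic hulls of
\[
	\E_w \doteq \{e_j\}_{j\in I_B(\xb)}\bigcup\{(1,-w_j),(1,w_j)\}_{j\in I_0(\xb)},
\]
where $e_j$ denotes the canonical generator of the $j$-th $\R_+$ factor. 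Each $\E_w$ is linearly independent, since its vectors occupy pairwise-disjoint coordinate blocks and, within each $I_0(\xb)$-block, $(1,-w_j)$ and $(1,w_j)$ are linearly independent as $w_j\neq 0$. Mounting the family into the block matrix $M$ of~\eqref{eq:Mj} and applying Lemma~\ref{lem:conicli}, $C$-linear independence is equivalent to positive linear independence of $\{Mv\}_{v\in\E_w}$ for every fixed $w$; and $\{Mv\}_{v\in\E_w}$ is exactly the family~\eqref{socp:setndg}, because $e_j$ produces (a positive multiple of) $Dg_j(\xb)^\T u_1(g_j(\xb))$ and $(1,\pm w_j)$ produces $Dg_j(\xb)^\T(1,\pm w_j)$. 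This gives item~2. Item~1 follows the same route with the full space $\R^{|I_B(\xb)|}\times\prod_{j\in I_0(\xb)}\R^{m_j}$ in place of the cone: replacing conic hulls by linear spans in the decomposition above (legitimate by Remark~\ref{remarklemma}, and using $\R=\textnormal{span}(\{1\})$), the same sets $\E_w$ generate the whole product space and remain linearly independent, so Remark~\ref{remarklemma} yields that nondegeneracy is equivalent to linear independence of~\eqref{socp:setndg} for every $w$.

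The only real obstacle is the bookkeeping in the product setting: I must verify that the generators $\E_w$ stay linearly independent once the $I_B(\xb)$- and $I_0(\xb)$-blocks are combined, and confirm that $Mv$ reproduces~\eqref{socp:setndg} term by term. Both are routine once the disjoint block structure is made explicit, so after the short eigenvector identity for the $I_B(\xb)$-terms the statement is essentially a direct instance of Lemma~\ref{lem:conicli} and Remark~\ref{remarklemma}; no further analytic difficulty arises.
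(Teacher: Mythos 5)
Your proof is correct and follows essentially the same route as the paper's: both apply Lemma~\ref{lem:conicli} (and Remark~\ref{remarklemma} for item~1) to the product cone with slice generators $\{1\}$ on the $I_B(\xb)$-factors and $\{(1,-w_j),(1,w_j)\}$ on the $I_0(\xb)$-factors. The only difference is that you make explicit the rescaling identity $\Gamma_j g_j(\xb)=2g_{j,0}(\xb)\,u_1(g_j(\xb))$ linking~\eqref{socp:setndgclass} to~\eqref{socp:setndg}, which the paper leaves implicit; this is a welcome clarification, not a deviation.
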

\begin{proof}
	For item 2, it suffices to apply Lemma~\ref{lem:conicli} considering the product $C=\prod_{j\in J} C_j$, $J\doteq I_B(\xb)\cup I_0(\xb)$, where
	\[
		C_j\doteq\left\{
			\begin{array}{ll}
				\R_+, & \textnormal{if } j\in I_B(\xb),\\
				\Lor_{m_{j}}, & \textnormal{if } j\in I_0(\xb),
			\end{array}
		\right.
	\] 
	to the matrix $M=[M_j]_{j\in J}$ arranged as in~\eqref{eq:Mj}, whose blocks are given by
	\[
		M_j\doteq\left\{
			\begin{array}{ll}
				Dg_j(\xb)^\T u_1(g_j(\xb)), & \textnormal{if } j\in I_B(\xb),\\
				Dg_j(\xb)^\T, & \textnormal{if } j\in I_0(\xb).
			\end{array}
		\right.
	\]
	To see why $C$ fits the description of Lemma~\ref{lem:conicli}, define $S_j\doteq\{1\}$ for every $j\in I_B(\xb)$, $S_j\doteq\{\bar{w}_j\in \R^{m_j-1}\colon \|\bar{w}_j\|=1\}$ for every $j\in I_0(\xb)$; then, let $S\doteq\prod_{j\in J} S_j$ and for each $\bar{w}\doteq(\bar{w}_j)_{j\in J}\in S$, with $\bar{w}_j\in S_j$ for $j\in J$, define $\E_{\bar{w}}\doteq\prod_{j\in J} \E_{\bar{w}_j}$, where
	\[
		\E_{\bar{w}_j}\doteq \left\{
		\begin{array}{ll}
			1, & \textnormal{if } j\in I_B(\xb),\\
			\{(1,-\bar{w}_j), (1,\bar{w}_j)\}, & \textnormal{if } j\in I_0(\xb),
		\end{array}
		\right.
	\]
	for every $j\in J$. Observe that $C=\bigcup_{\bar{w}\in S}\textnormal{cone}\left(\E_{\bar{w}}\right)$ and the proof of item 2 is over. The proof for item 1 is similar, considering Remark \ref{remarklemma}.
\jo{\qed} \end{proof}
For a better understanding of the meaning of Corollary~\ref{socp:nondegen}, let us resume the short discussion after Lemma~\ref{lem:conicli}. Note that LICQ for a pair of constraints $g_1(x)\geq 0$ and $g_2(x)\geq 0$ at a point $\xb$ such that $g_1(\xb)=g_2(\xb)=0$, when seen through Corollary~\ref{socp:nondegen}, becomes equivalent to $Dg(\xb)^\T\left(\begin{array}{c}\cos(w)\\ \sin(w)\end{array}\right)$ being non-zero, for every $w\in [0,2\pi]$, where $g\doteq(g_1,g_2)$. On the one hand, this is obvious; but on the other hand, note that the process of checking linear independence of a couple of $n$-dimensional vectors is reduced to checking whether one $n$-dimensional vector is zero or not, for each fixed real parameter $w$. Of course, this reasoning can be extended to arbitrary dimensions and arbitrary parametrizations, and Corollary~\ref{socp:nondegen} is simply one of these extensions where the parametrization is given in terms of the second-order cone. This will turn out to be relevant in our analysis as we will be able to identify that some of the linear independence requirements will be superfluous for a constraint qualification to be defined. This kind of reasoning can also be applied to the cone of symmetric positive semidefinite matrices, leading to a different, in fact simpler, proof of~\cite[Proposition 3.2]{weaksparsecq}, which is the analogue of Corollary~\ref{socp:nondegen} in the context of NSDP, hence providing some intuition for a result that was originally presented as a mere technical tool in~\cite{weaksparsecq}.

With the characterization of Corollary~\ref{socp:nondegen} at hand, we can take a close look at a simple example that shall motivate our next steps:

\begin{example}\label{socp:motivation}
Let $g_{0}, g_1: \R^{n} \to \R$ be continuously differentiable functions, define $g\doteq (g_0,g_1)$, and let $\xb$ be a point such that:
\begin{itemize}
\item $g(\xb)=0$;
\item $\nabla g_0(\xb)$ and $\nabla g_1(\xb)$ are linearly independent.
\end{itemize}
Observe that nondegeneracy holds for the constraint $g(x)\in \Lor_2$ at $\xb$ since $Dg(\xb)^\T$ is $\R^2$-linearly independent. Now consider the equivalent NSOCP constraint 
\[
	\tilde{g}(x)\doteq(g_0(x),g_1(x),0, \ldots, 0) \in \Lor_m
\]
and observe that the KKT conditions for it are the same as for the constraint $g(x)\in \Lor_2$. However, by Corollary~\ref{socp:nondegen}, nondegeneracy for the reformulated problem is equivalent to the linear independence of the vectors
\[
	D\tilde{g}(\xb)^\T (1,-\bar{w})=\nabla g_0(\xb)-\bar{w}_1 \nabla g_1(\xb) \si{\mand}\ima{\mand} \jo{ \] and \[ } D\tilde{g}(\xb)^\T (1,\bar{w})=\nabla g_0(\xb)+\bar{w}_1 \nabla g_1(\xb)
\] for every $\bar{w}=(\bar{w}_1,\ldots,\bar{w}_{m-1})$ such that $\|\bar{w}\|=1$, which is violated when $\bar{w}_1=0$.

On the other hand, note that for every $x$ such that $g_1(x)\neq 0$ the eigenvectors of $\tilde{g}(x)$ are uniquely determined by
\[
	u_1(\tilde{g}(x))=\frac{1}{2}\left( 1, -\frac{g_1(x)}{|g_1(x)|},0 , \ldots, 0 \right) \si{\mand}\ima{\mand} \jo{ \] and \[ } u_2(\tilde{g}(x))=\frac{1}{2}\left( 1, \frac{g_1(x)}{|g_1(x)|},0 , \ldots, 0 \right).
\]
This suggests that although $\tilde g(\overline{x})$ admits multiple eigenvector decompositions $\frac{1}{2}(1,-\overline{w})$ and $\frac{1}{2}(1,\overline{w})$ with $\|\overline{w}\|=1$, the only relevant ones are \si{$\overline{w}=(\pm 1,0,\dots,0)$}\jo{$\overline{w}=(\pm 1,0,\dots, \linebreak 0)$}. That is, in light of our previous work in NSDP~\ima{(}\cite{weaksparsecq}\ima{)}, we can infer that the problematic choices of $\frac{1}{2}(1,-\bar{w})$ and $\frac{1}{2}(1,\bar{w})$ such that $\bar{w}_1=0$ may be disregarded when defining a constraint qualification. In fact, we may consider all sequences $\seq{x}\to \xb$ and we have that when $g_1(x^k)\neq 0$ for every $k\in \N$, the sequences $\{u_1(\tilde{g}(x^k))\}_{k\in \N}$ and $\{u_2(\tilde{g}(x^k))\}_{k\in \N}$ of eigenvectors of $\tilde{g}(x^k)$ are uniquely defined and $\frac{1}{2}(1,-\bar{w})$ and $\frac{1}{2}(1,\bar{w})$ with $\overline{w}_1=0$ are not among their limit points. Similarly, when $g_1(x^k)=0$ for some indexes $k\in\N$ one may also choose the eigendecompositions of $\tilde{g}(x^k)$ that avoids having $\frac{1}{2}(1,-\overline{w})$ and $\frac{1}{2}(1,\overline{w})$ with $\overline{w}_1=0$ as limit points.

Conversely, note that for any sequence $\seq{x}\to \xb$, the choice $\bar{w}=(\pm1,0,\ldots,0)$ does not present the same issue, and in this case we get that the vectors
\[
	D\tilde{g}(\xb)^\T (1,-\bar{w})=\nabla g_0(\xb)\mp\nabla g_1(\xb) \quad\textnormal{and}\quad D\tilde{g}(\xb)^\T (1,\bar{w})=\nabla g_0(\xb)\pm \nabla g_1(\xb)
\]
are linearly independent.
\end{example}

Example~\ref{socp:motivation} suggests that demanding linear independence of~\eqref{socp:setndg} for all $\bar{w}_{j}$ may be unnecessarily strong for a constraint qualification. In fact, it also suggests that only the limit points of sequences consisting of eigenvectors of $g(x^k)$, for each $\seq{x}\to \xb$, are needed. This observation leads to two new constraint qualifications for NSOCP:

\begin{definition}[Weak-nondegeneracy and weak-Robinson's CQ]\label{socp:wndgrobdef}
Let $\xb\in \F$. We say that $\xb$ satisfies:
\begin{itemize}
\item \emph{Weak-nondegeneracy} if, for each sequence $\seq{x}\to \xb$, there exists some $I\subseteq_{\infty} \N$ and convergent eigenvectors sequences \si{$\{u_1(g_{j}(x^k))\}_{k\in I}\to \frac{1}{2}(1,-\bar{w}_{j})$} \jo{$\{u_1(g_{j}(x^k))\}_{k\in I}\to \linebreak \frac{1}{2}(1,-\bar{w}_{j})$} and $\{u_2(g_{j}(x^k))\}_{k\in I}\to \frac{1}{2}(1,\bar{w}_{j})$, with $\|\bar{w}_{j}\| = 1$, for every ${j}\in I_0(\xb)$, such that~\eqref{socp:setndg} is linearly independent;

\item \emph{Weak-Robinson's CQ} if, for each sequence $\seq{x}\to \xb$, there exists some $I\subseteq_{\infty} \N$ and convergent eigenvectors sequences $\{u_1(g_{j}(x^k))\}_{k\in I}\to \frac{1}{2}(1,-\bar{w}_{j})$ and $\{u_2(g_{j}(x^k))\}_{k\in I}\to \frac{1}{2}(1,\bar{w}_{j})$, for every ${j}\in I_0(\xb)$, such that~\eqref{socp:setndg} is positively linearly independent;
\end{itemize}
where the notation $I\subseteq_{\infty}\N$ means that $I$ is an infinite subset of $\N$.
\end{definition}

Both conditions presented in Definition~\ref{socp:wndgrobdef} will be proved to be CQs later on; let us first discuss their properties and relations with other CQs. From Definition~\ref{socp:wndgrobdef}, it is clear that weak-nondegeneracy is implied by nondegeneracy, but the converse is not necessarily true, as illustrated by Example~\ref{socp:motivation}. Notice also that both conditions from Definition \ref{socp:wndgrobdef} are maintained under the addition of structural zeros as in Example~\ref{socp:motivation}, which somehow shows the robustness of the conditions we define. Similarly, for NSDPs, in \cite{weaksparsecq}, it is shown that the analogous conditions from Definition \ref{socp:wndgrobdef} are maintained when stacking several semidefinite constraints into a single block diagonal semidefinite constraint. The next example shows, however, that weak-nondegeneracy may hold when nondegeneracy fails even when the problem does not have structural zeros:

\begin{example}[Weak-nondegeneracy is weaker than Nondegeneracy]\label{socp:exwndgnotndg2}
Consider the constraint 
\[
	g(x) \doteq (x_1,x_2,x_2) \in \Lor_{3}
\]
at the point $\bar{x} \doteq (0,0)$, which does not satisfy nondegeneracy. Now, take any sequence $\seq{x} \rightarrow \xb$. There are three possible cases to consider:
\begin{enumerate}
\item There exists some infinite subset $I\subseteq_{\infty}\N$ such that $x_2^{k} > 0$ for all $k\in I$;
\item Case 1 fails to hold, but there exists some infinite subset $I\subseteq_{\infty}\N$ such that $x_2^{k} < 0$ for all $k\in I$;
\item Cases 1 and 2 both fail, implying $x_2=0$ for all $k$ large enough;
\end{enumerate}
In Case 1, the eigenvectors $u_{1}(g(x^k))$ and $u_{2}(g(x^k))$ are uniquely determined by
\[
u_{1}(g(x^k)) = \frac{1}{2}\left(1, -\frac{1}{\sqrt{2}}, -\frac{1}{\sqrt{2}} \right) \ \mand \ u_{2}(g(x^k)) = \frac{1}{2}\left(1, \frac{1}{\sqrt{2}}, \frac{1}{\sqrt{2}} \right),
\]
for all $k\in I$. Define $\bar{w} \doteq \left(\frac{1}{\sqrt{2}}, \frac{1}{\sqrt{2}} \right)$ and note that
\[
	\lim_{k\in I} u_{1}(g(x^k)) = \frac{1}{2}(1, -\bar{w}) \quad \textnormal{and} \quad \lim_{k\in I} u_{2}(g(x^k)) = \frac{1}{2}(1, \bar{w}).
\]
In addition, 
\[
	Dg(\xb)^{\T}(1, -\bar{w}) = \frac{1}{2} \left(\begin{array}{c} 1 \\ -\sqrt{2} \end{array}\right) \
	\mand \
	Dg(\xb)^{\T}(1, \bar{w}) = \frac{1}{2} \left(\begin{array}{c} 1 \\ \sqrt{2} \end{array}\right)
\]
are linearly independent. Case 2 is analogous. In Case 3, we have that the eigenvectors of $g(x^k)$ are not uniquely defined in \eqref{socp:autovals}; thus, in checking Definition \ref{socp:wndgrobdef} we may choose an appropriate eigendecomposition of each $g(x^k)$. In particular, we may pick the same decomposition analyzed previously to conclude that weak-nondegeneracy holds at $\xb$. Notice that since nondegeneracy fails, by Corollary \ref{socp:nondegen} there must exist some $\bar{w}, \|w\|=1$, such that $Dg(\xb)^\T(1,-\bar{w})$ and $Dg(\xb)^\T(1,\bar{w})$ are linearly dependent. This is the case of $\bar{w}\doteq\left(\frac{1}{\sqrt{2}},\frac{-1}{\sqrt{2}}\right)$ or $\bar{w}\doteq\left(\frac{-1}{\sqrt{2}},\frac{1}{\sqrt{2}}\right)$, however, since weak-nondegeneracy holds, these limit points can be avoided considering the eigendecompositions of $\{g(x^k)\}_{k\in \N}$ for any sequence $x^k\to\xb$.
\end{example}

At this point we acknowledge that weak-nondegeneracy may be hard to check. However, besides its robustness in terms of structural zeros as discussed in Example \ref{socp:motivation}, let us prove that there is a deeper connection between nondegeneracy and weak-nondegeneracy, in the sense that we may characterize nondegeneracy by the validity of weak-nondegeneracy plus a simple linear independence requirement of a partial family of derivative vectors in $I_0(\xb)$, namely, by removing from consideration in the family \eqref{socp:setndgclass} that defines nondegeneracy all gradients of first component entries, that is, $\nabla g_{j,0}(\xb), j\in I_0(\xb)$ together with the vectors indexed by $I_B(\xb)$. In fact, in Example \ref{socp:exwndgnotndg2}, this family of vectors reduces to the rows of $D\widehat{g}(\xb)$, where $\widehat{g}(x)\doteq(x_2,x_2)$, which are linearly dependent. Loosely speaking, weak-nondegeneracy may be thought as an appropriate form of nondegeneracy but without requiring linear independence of this partial family of vectors.

\begin{proposition}[Difference between weak-nondegeneracy and Nondegeneracy]\label{prop:ndgsurjective}
Let $\xb$ be a feasible point of (\ref{NSOCP}). We have that nondegeneracy holds at $\xb$ if, and only if, weak-nondegeneracy holds at $\xb$ and, in addition, the matrix
\begin{displaymath}
M \doteq \left[ \begin{array}{ccc}
	& \vdots &\\
     & D\widehat{g}_{{j}}(\bar{x}) & \\ 
     & \vdots &
\end{array} \right]_{{j} \in I_{0}(\xb)} 
\end{displaymath}
is surjective.
\end{proposition}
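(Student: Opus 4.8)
The plan is to play the two characterizations of nondegeneracy against each other: Corollary~\ref{socp:nondegen} says nondegeneracy holds at $\xb$ if and only if the family~\eqref{socp:setndg} is linearly independent for \emph{every} unit vector $\bar{w}_{j}$, $j\in I_0(\xb)$, whereas weak-nondegeneracy (Definition~\ref{socp:wndgrobdef}) only certifies this for the particular $\bar{w}_{j}$ that arise as limits of eigenvector sequences. The whole content of the proposition is therefore the passage from ``some $\bar{w}_{j}$'' to ``all $\bar{w}_{j}$'', and surjectivity of $M$ will be exactly the device that bridges this gap.

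For the forward implication I would first observe that nondegeneracy trivially implies weak-nondegeneracy: given any $\seq{x}\to\xb$, the eigenvectors $u_i(g_j(x^k))$ lie in a compact set, so one extracts an infinite subset $I$ along which they converge to some $\frac{1}{2}(1,\mp\bar{w}_j)$ with $\|\bar{w}_j\|=1$; linear independence of~\eqref{socp:setndg} for these limit vectors is then granted by nondegeneracy, since it holds for every unit $\bar{w}_j$. To get surjectivity of $M$, I return to the original formulation in Definition~\ref{socp:ndgclass}, which (as the cone is a full space) amounts to injectivity of the operator $(v_B,(v_j)_{j\in I_0})\mapsto \sum_{j\in I_B} v_{B,j}\,Dg_j(\xb)^\T\Gamma_j g_j(\xb)+\sum_{j\in I_0} Dg_j(\xb)^\T v_j$. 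Restricting to $v_B=0$ and to each $v_j$ of the form $(0,c_j)$, and using $Dg_j(\xb)^\T(0,c_j)=D\widehat{g}_j(\xb)^\T c_j$, injectivity forces $\sum_{j\in I_0} D\widehat{g}_j(\xb)^\T c_j=0$ to imply all $c_j=0$; this is precisely injectivity of $M^\T$, i.e.\ surjectivity of $M$.

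For the converse, suppose weak-nondegeneracy holds and $M$ is surjective, and fix an arbitrary family of unit vectors $\bar{w}_j\in\R^{m_j-1}$, $j\in I_0(\xb)$. Surjectivity of $M$ yields a single direction $d\in\R^n$ with $D\widehat{g}_j(\xb)\,d=\bar{w}_j$ for all $j\in I_0(\xb)$ simultaneously. Taking $x^k\doteq\xb+t_k d$ with $t_k\downarrow 0$ and using $g_j(\xb)=0$ for $j\in I_0(\xb)$, a first-order expansion gives $\widehat{g}_j(x^k)=t_k\bar{w}_j+o(t_k)$, so for $k$ large $\widehat{g}_j(x^k)\neq 0$ and $\widehat{g}_j(x^k)/\|\widehat{g}_j(x^k)\|\to\bar{w}_j$. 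Hence the eigenvectors in~\eqref{socp:autovals} are uniquely determined along this sequence and converge to $\frac{1}{2}(1,-\bar{w}_j)$ and $\frac{1}{2}(1,\bar{w}_j)$; along any infinite subset the eigenvector limits are thus pinned to these same $\bar{w}_j$. Weak-nondegeneracy applied to $\seq{x}$ then gives linear independence of~\eqref{socp:setndg} for exactly our prescribed $\bar{w}_j$, and since the $\bar{w}_j$ were arbitrary unit vectors, Corollary~\ref{socp:nondegen} delivers nondegeneracy.

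The main obstacle is the converse construction: realizing an arbitrary target $(\bar{w}_j)_{j\in I_0}$ simultaneously as the limit of the normalized vectors $\widehat{g}_j(x^k)$ along a \emph{single} sequence $x^k\to\xb$. This is where surjectivity of $M$ is indispensable, since it solves $D\widehat{g}_j(\xb)\,d=\bar{w}_j$ for all $j$ at once, and where the normalization $\|\bar{w}_j\|=1$ is what guarantees $\widehat{g}_j(x^k)\neq 0$ for large $k$, so that the spectral decomposition is unique and its limit is forced to be the prescribed $\bar{w}_j$ rather than some other unit vector.
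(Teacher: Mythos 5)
Your proposal is correct and follows essentially the same route as the paper: the forward direction extracts surjectivity of $M$ directly from Definition~\ref{socp:ndgclass}, and the converse hinges on exactly the paper's construction, namely solving $D\widehat{g}_j(\xb)d=\bar{w}_j$ for all $j\in I_0(\xb)$ via surjectivity of $M$, setting $x^k\doteq\xb+t_kd$, and using the Taylor expansion to pin the (uniquely determined) eigenvector limits to the prescribed $\bar{w}_j$. The only cosmetic difference is that you argue the converse directly while the paper phrases it as a contraposition; the substance is identical.
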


\begin{proof}
From Definition~\ref{socp:wndgrobdef} it is clear that if nondegeneracy holds at $\xb$, then weak-nondegeneracy also holds at $\xb$. Moreover, from~(\ref{socp:setndgclass}) we obtain that $M$ is surjective. Conversely, suppose that nondegeneracy does not hold at $\xb$. By Corollary~\ref{socp:nondegen}, there are unitary vectors $\bar{w}_{{j}} \in \R^{m_{{j}}-1}$, ${j} \in I_{0}(\xb)$, such that (\ref{socp:setndg}) is linearly dependent.

Let us define $\bar{w}=(\bar{w}_j)_{j\in I_0(\xb)}$. By the surjectivity of $M$, there exists a non-zero vector $d \in \R^{n}$ such that $\bar{w} = Md$. That is, we have that $D\widehat{g}_{{j}}(\xb)d=\bar{w}_j$ for all ${j} \in I_{0}(\xb)$. Now, take any positive sequence $\{t_{k}\}_{k\in \N} \to 0^{+}$ and let
\[
	x^{k} \doteq \xb + t_{k}d, \ \forall k\in \N.
\]
We have that $\seq{x} \to \xb$ and when we consider ${j} \in I_{0}(\xb)$ and the Taylor expansion of $\widehat{g}_{{j}}(x^{k})$ around $\xb$, we obtain that
\[
	\widehat{g}_{{j}}(x^{k}) = t_{k}\bar{w}_j + o(t_{k}) \neq 0
\] for all $k\in\N$ large enough, since $\bar{w}_j\neq0$. Moreover, for the indices ${j} \in I_{B}(\xb)$ we also have that $\widehat{g}_{{j}}(x^{k}) \neq 0$ for all $k$ large enough, because $\widehat{g}_j(\xb)\neq 0$. This means that the eigenvectors of $\widehat{g}_{{j}}(x^k)$ are uniquely determined from \eqref{socp:autovals} for all ${j} \in I_{0}(\xb) \cup I_{B}(\xb)$ and all $k\in \N$. In particular, for ${j} \in I_{0}(\xb)$ we have that

\begin{displaymath}
\dfrac{\widehat{g}_{{j}}(x^{k}) }{\|\widehat{g}_{{j}}(x^{k}) \|} = \dfrac{D\widehat{g}_{{j}}(\xb)d + o(t_{k})/t_{k}}{\|D\widehat{g}_{{j}}(\xb)d + o(t_{k})/t_{k} \|} \to 
\bar{w}_j.
\end{displaymath}
As a consequence, since $\bar{w}_{{j}} \in \R^{m_{{j}}-1}$, ${j} \in I_{0}(\xb)$, is such that \eqref{socp:setndg} is linearly dependent, we conclude that weak-nondegeneracy does not hold at $\xb$.
\jo{\qed} \end{proof}

The following example shows that although weak-nondegeneracy implies weak-Robinson's CQ, the converse is not true:

\begin{example}[Weak-Robinson is weaker than weak-nondegeneracy]\label{socp:weakrobnotweakndg}
Consider the constraint
\[
	g(x)\doteq (4x,2x,x)\in \Lor_3
\]
and the point $\xb\doteq 0$. Clearly, it satisfies Robinson's CQ, hence it also satisfies weak-Robinson's CQ.
However, observe that taking any sequence $\seq{x}\to \xb$ such that $x^k>0$ for all $k\in \N$, we have
\[
u_{1}(g(x^{k})) = \frac{1}{2}\left(1, -\frac{2}{\sqrt{5}}, -\frac{1}{\sqrt{5}}\right) \ \mand \ u_{2}(g(x^{k})) = \frac{1}{2}\left(1, \frac{2}{\sqrt{5}}, \frac{1}{\sqrt{5}}\right),
\] 
hence we have $u_{1}(g(x^{k}))\to\frac{1}{2}(1,-\bar{w})$ and $u_{2}(g(x^{k}))\to\frac{1}{2}(1,\bar{w})$ where $\bar{w}= \left(\frac{2}{\sqrt{5}}, \frac{1}{\sqrt{5}} \right)$. Then,
\[
	Dg(\xb)^{\T}(1,-\bar{w}) = \dfrac{4\sqrt{5} - 5}{2\sqrt{5}} > 0 \ \mand \ Dg(\xb)^{\T}(1,\bar{w}) = \dfrac{4\sqrt{5}+5}{2\sqrt{5}} > 0
\]
are linearly dependent, although positively linearly independent, implying that weak-nondegeneracy does not hold at $\xb$.
\end{example}

To discuss in detail the relation between weak-Robinson's CQ and Robinson's CQ for~(\ref{NSOCP}), we rely on a simple lemma:

\begin{lemma}\label{lemma:positively_li}
Let $\xb$ be a feasible point of (\ref{NSOCP}). If (weak-Robinson's CQ) weak-nondegeneracy holds at $\xb$, then the family of vectors
\begin{equation}\label{lemma:grad_vectors}
\left\{\nabla g_{{j},0}(\xb) \right\}_{{j} \in I_{0}(\xb)} \bigcup \left\{Dg_{{j}}(\xb)^{\T}u_{1}(g_{{j}}(\xb))\right\}_{{j} \in I_{B}(\xb)}
\end{equation}
is (positively) linearly independent. 
\end{lemma}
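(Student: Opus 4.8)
The plan is to handle the linear case (under weak-nondegeneracy) and the positive case (under weak-Robinson's CQ) simultaneously, since the only structural difference between the families \eqref{socp:setndg} and \eqref{lemma:grad_vectors} is in the blocks indexed by $I_0(\xb)$; the blocks indexed by $I_B(\xb)$ are literally the same vectors $Dg_j(\xb)^\T u_1(g_j(\xb))$. The whole argument rests on one elementary identity: for \emph{any} unit vector $\bar w_j$,
\[
	\nabla g_{j,0}(\xb) = \tfrac{1}{2}\left[\, Dg_j(\xb)^\T (1,-\bar w_j) + Dg_j(\xb)^\T (1,\bar w_j) \,\right],
\]
which follows from $(1,-\bar w_j)+(1,\bar w_j)=(2,0,\ldots,0)$ together with $Dg_j(\xb)^\T(1,0,\ldots,0)=\nabla g_{j,0}(\xb)$. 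Thus each vector $\nabla g_{j,0}(\xb)$ occurring in \eqref{lemma:grad_vectors} is the average of the two vectors $Dg_j(\xb)^\T(1,\pm\bar w_j)$ occurring in \eqref{socp:setndg}.

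First I would extract one admissible choice of the parameters. Applying the hypothesis from Definition~\ref{socp:wndgrobdef} (weak-Robinson's CQ in the positive case, weak-nondegeneracy in the linear case) to the constant sequence $x^k=\xb$, and noting that for $j\in I_0(\xb)$ we have $g_j(\xb)=0$, so its eigenvectors may be freely chosen as $\tfrac12(1,\mp\bar w_j)$ for arbitrary unit $\bar w_j$ (constant, hence trivially convergent sequences), I obtain unit vectors $\bar w_j$, $j\in I_0(\xb)$, for which the family \eqref{socp:setndg} is (positively) linearly independent. For $j\in I_B(\xb)$ the vector $u_1(g_j(\xb))$ is already fixed, so no choice is needed there.

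With these $\bar w_j$ in hand, suppose a combination
\[
	\sum_{j\in I_0(\xb)} \alpha_j \nabla g_{j,0}(\xb) + \sum_{j\in I_B(\xb)} \beta_j\, Dg_j(\xb)^\T u_1(g_j(\xb)) = 0
\]
holds, with $\alpha_j,\beta_j\in\R$ in the linear case and $\alpha_j,\beta_j\ge 0$ in the positive case. Substituting the averaging identity turns the left-hand side into a combination of the vectors of \eqref{socp:setndg} in which each of $Dg_j(\xb)^\T(1,-\bar w_j)$ and $Dg_j(\xb)^\T(1,\bar w_j)$ carries coefficient $\tfrac{\alpha_j}{2}$ and each $I_B(\xb)$-vector carries $\beta_j$; crucially these coefficients are nonnegative whenever the $\alpha_j,\beta_j$ are. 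The (positive) linear independence of \eqref{socp:setndg} then forces $\tfrac{\alpha_j}{2}=0$ and $\beta_j=0$, whence $\alpha_j=\beta_j=0$, giving the (positive) linear independence of \eqref{lemma:grad_vectors}.

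I do not expect a genuine obstacle here: the argument is short once one recognizes the averaging identity and the fact that the weak conditions, already when applied to the trivial constant sequence, certify the existence of one admissible tuple $(\bar w_j)_{j\in I_0(\xb)}$. The only point demanding a little care is the sign bookkeeping that lets the positive and linear cases be treated at once, namely that placing the \emph{same} coefficient $\tfrac{\alpha_j}{2}$ on both paired vectors $Dg_j(\xb)^\T(1,\pm\bar w_j)$ preserves nonnegativity and is exactly what transfers (positive) linear independence from \eqref{socp:setndg} to \eqref{lemma:grad_vectors}.
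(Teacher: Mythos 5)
Your proposal is correct and follows essentially the same route as the paper: extract one admissible tuple $(\bar w_j)_{j\in I_0(\xb)}$ from Definition~\ref{socp:wndgrobdef} for which \eqref{socp:setndg} is (positively) linearly independent, and then use the identity $(1,-\bar w_j)+(1,\bar w_j)=(2,0,\ldots,0)$ to convert any vanishing (nonnegative) combination of \eqref{lemma:grad_vectors} into one of \eqref{socp:setndg} with coefficients $\alpha_j/2$ on each paired vector. The paper phrases this as a proof by contradiction with $\alpha_j=\beta_j=\eta_j/2$, but the substance is identical.
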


\begin{proof}
Assume that weak-Robinson's CQ holds at $\xb$, so there exists some vectors $\bar{w}_j\in \R^{m_j-1}$, $\|\bar{w}_j\|=1$, $j\in I_0(\xb)$, such that~\eqref{socp:setndg} is positively linearly independent; and, by contradiction, suppose that (\ref{lemma:grad_vectors}) is positively linearly dependent. Then, there are some $\eta_{{j}}\geq 0$, $j\in I_B(\xb)\cup I_0(\xb)$, not all zero, such that

\begin{equation}\label{aux:eq2}
\sum_{{j} \in  I_{0}(\bar{x})} \eta_{{j}} \nabla g_{{j},0}(\bar{x}) - \sum_{{j} \in I_{B}(\bar{x})} \eta_{{j}} Dg_{{j}}(\xb)^{\T}u_{1}(g_{{j}}(\xb))=0.
\end{equation}
Now set
\[
	\alpha_j=\beta_j=\frac{\eta_j}{2}
\]
for every $j\in I_0(\xb)$ and~\eqref{aux:eq2} can be rewritten as
\si{
\begin{equation*}
\sum_{{j}\in I_{0}(\bar{x})} \alpha_{{j}}Dg_{{j}}(\bar{x})^{\T}(1, -\bar{w}_{{j}}) + \sum_{{j}\in I_{0}(\bar{x})} \beta_{{j}}Dg(\bar{x})^{\T}(1, \bar{w}_{{j}}) + \sum_{{j} \in I_{B}(\bar{x})}\eta_{{j}}Dg_{{j}}(\bar{}x)^{\T}u_{1}(g_{{j}}(\bar{x}))=0, \\
\end{equation*}
}
\jo{
\begin{eqnarray*}
\sum_{{j}\in I_{0}(\bar{x})} \alpha_{{j}}Dg_{{j}}(\bar{x})^{\T}(1, -\bar{w}_{{j}}) + \sum_{{j}\in I_{0}(\bar{x})} \beta_{{j}}Dg(\bar{x})^{\T}(1, \bar{w}_{{j}}) + & & \\
\sum_{{j} \in I_{B}(\bar{x})}\eta_{{j}}Dg_{{j}}(\bar{}x)^{\T}u_{1}(g_{{j}}(\bar{x})) &=&0, 
\end{eqnarray*}
}
which implies~\eqref{socp:setndg} is positively linearly dependent, contradicting \si{weak-Robinson's CQ} \jo{weak-\linebreak Robinson's CQ}. The statement regarding weak-nondegeneracy follows analogously.
\jo{\qed} \end{proof}

Recall that Robinson's CQ can be evaluated separately for each of the constraints $g_{{j}}(x) \in \Lor_{m_{{j}}}$, $j\in \{1,\ldots,q\}$, and that this is weaker than Robinson's CQ when such system is regarded as a whole (however, not being a CQ). In fact, for any given $\xb\in \F$, the former can be characterized by the existence of some vectors $d_j\in \R^{n}$, $j\in \{1,\ldots,q\}$, such that $g_j(\xb)+Dg_j(\xb)d_j\in \int \Lor_{m_j}$, whereas the latter requires in addition $d_1=d_2=\ldots=d_q$ to hold. With this in mind, we prove next that weak-Robinson's CQ is somewhat in-between these two forms of Robinson's CQ.

\begin{theorem}\label{socp:wrob_rob_sep}
Consider Problem~(\ref{NSOCP}) and let $\xb \in \F$. If weak-Robinson's CQ holds at $\xb$, then for each index ${j}\in\{ 1, \ldots, q\}$ the point $\xb$ satisfies Robinson's CQ for the isolated constraint $g_{{j}}(x) \in \Lor_{m_{{j}}}$.
\end{theorem}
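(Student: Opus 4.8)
The plan is to reduce the statement to a per-constraint analysis and to treat the three possible types of index $j_0$ separately, according to the partition~\eqref{socp:index}. The case $j_0 \in I_{\int}(\xb)$ is immediate: the constraint is inactive, so Robinson's CQ for the isolated constraint holds vacuously (no active block appears). For $j_0 \in I_B(\xb)$, Robinson's CQ for the single constraint amounts to $Dg_{j_0}(\xb)^\T u_1(g_{j_0}(\xb)) \neq 0$, and this follows at once from Lemma~\ref{lemma:positively_li}: under weak-Robinson's CQ the family~\eqref{lemma:grad_vectors} is positively linearly independent, so each of its vectors, including $Dg_{j_0}(\xb)^\T u_1(g_{j_0}(\xb))$, is nonzero. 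The same lemma also supplies $\nabla g_{j_0,0}(\xb)\neq 0$ for $j_0\in I_0(\xb)$, which will be decisive. Thus the whole difficulty concentrates in the case $j_0 \in I_0(\xb)$.

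For $j_0 \in I_0(\xb)$, Robinson's CQ for the isolated constraint means that $Dg_{j_0}(\xb)^\T$ is $\Lor_{m_{j_0}}$-linearly independent which, by Lemma~\ref{lem:conicli} applied to the single cone $\Lor_{m_{j_0}}$, is equivalent to the pair $Dg_{j_0}(\xb)^\T(1,-w)$, $Dg_{j_0}(\xb)^\T(1,w)$ being positively linearly independent for every unit $w\in\R^{m_{j_0}-1}$. I argue by contradiction, assuming there is a nonzero $v\doteq(v_0,\widehat v)\in\Lor_{m_{j_0}}$ with $Dg_{j_0}(\xb)^\T v=0$; then $v_0\ge\|\widehat v\|$ and in fact $v_0>0$.

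The main obstacle is that weak-Robinson's CQ can only be probed along sequences $\seq x\to\xb$, and the eigenvectors of $g_{j_0}(x^k)$ can only accumulate at directions $\frac{1}{2}(1,\pm\widetilde w)$ with $\widetilde w$ \emph{reachable}, i.e. $\widetilde w\in\Im D\widehat g_{j_0}(\xb)$; a general violating $w$ need not be reachable. I resolve this by a projection step. Decompose $\widehat v=p+q$ orthogonally with $p\in\Im D\widehat g_{j_0}(\xb)$ and $q\in(\Im D\widehat g_{j_0}(\xb))^\perp=\Ker D\widehat g_{j_0}(\xb)^\T$; then $D\widehat g_{j_0}(\xb)^\T p=D\widehat g_{j_0}(\xb)^\T\widehat v$, and since $\|p\|\le\|\widehat v\|\le v_0$ the vector $\widetilde v\doteq(v_0,p)$ again lies in $\Lor_{m_{j_0}}$, is nonzero, satisfies $Dg_{j_0}(\xb)^\T\widetilde v=Dg_{j_0}(\xb)^\T v=0$, and has its second block $p$ inside $\Im D\widehat g_{j_0}(\xb)$. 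Moreover $p\neq 0$, since $p=0$ would give $D\widehat g_{j_0}(\xb)^\T\widehat v=0$ and hence $v_0\nabla g_{j_0,0}(\xb)=Dg_{j_0}(\xb)^\T v=0$, forcing $\nabla g_{j_0,0}(\xb)=0$ and contradicting Lemma~\ref{lemma:positively_li}. Hence $\widetilde w\doteq p/\|p\|$ is a unit vector in $\Im D\widehat g_{j_0}(\xb)$, and the spectral decomposition $\widetilde v=\lambda_1 u_1(\widetilde v)+\lambda_2 u_2(\widetilde v)$, with $u_1(\widetilde v)=\frac{1}{2}(1,-\widetilde w)$, $u_2(\widetilde v)=\frac{1}{2}(1,\widetilde w)$ and $\lambda_1,\lambda_2\ge 0$, $\lambda_2>0$, shows via $Dg_{j_0}(\xb)^\T\widetilde v=0$ that the pair $Dg_{j_0}(\xb)^\T(1,-\widetilde w)$, $Dg_{j_0}(\xb)^\T(1,\widetilde w)$ is positively linearly dependent.

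It remains to reach $\widetilde w$ by a sequence and to invoke weak-Robinson's CQ. Since $\widetilde w\in\Im D\widehat g_{j_0}(\xb)$, I pick $d$ with $D\widehat g_{j_0}(\xb)d=\widetilde w$ and set $x^k\doteq\xb+t_k d$ with $t_k\downarrow 0$; a Taylor expansion gives $\widehat g_{j_0}(x^k)=t_k\widetilde w+o(t_k)\neq 0$ for all large $k$, so the eigenvectors $u_1(g_{j_0}(x^k))$, $u_2(g_{j_0}(x^k))$ are uniquely determined by~\eqref{socp:autovals} and converge to $\frac{1}{2}(1,-\widetilde w)$, $\frac{1}{2}(1,\widetilde w)$ along the whole sequence. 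Applying Definition~\ref{socp:wndgrobdef} to $\seq x$ yields an infinite $I\subseteq_{\infty}\N$ and eigenvector limits along which~\eqref{socp:setndg} is positively linearly independent; for the index $j_0$ the limit is forced to be $\bar w_{j_0}=\widetilde w$, since every subsequence of $\seq x$ inherits the same eigenvector limits. In particular the pair $Dg_{j_0}(\xb)^\T(1,-\widetilde w)$, $Dg_{j_0}(\xb)^\T(1,\widetilde w)$ is positively linearly independent, contradicting the previous paragraph. This contradiction establishes Robinson's CQ for the isolated constraint $g_{j_0}(x)\in\Lor_{m_{j_0}}$ and completes the argument.
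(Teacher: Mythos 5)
Your proof is correct and follows essentially the same route as the paper's: reduce to $j_0\in I_0(\xb)$ via Lemma~\ref{lemma:positively_li}, orthogonally decompose the violating direction against $\Ker D\widehat g_{j_0}(\xb)^\T\oplus\Im D\widehat g_{j_0}(\xb)$, reach the projected unit direction with the line sequence $x^k=\xb+t_kd$, and contradict weak-Robinson's CQ. The only (cosmetic) difference is that you certify the positive linear dependence at the projected direction through the spectral decomposition of the cone element $\widetilde v=(v_0,p)$, whereas the paper exhibits explicit coefficients $a,b$ -- these are the same numbers up to a common positive factor.
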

\begin{proof}
Let $\xb \in \F$ be a point such that weak-Robinson's CQ holds and assume that there exists an index $\ell \in \{1,\ldots,q\}$ such that Robinson's CQ does not hold. Then it follows by Lemma \ref{lemma:positively_li} that $g_{\ell} (\xb) = 0$. So there exists some $\bar{w}_{\ell}  \in \R^{m_{\ell} -1}$ such that $\|\bar{w}_{\ell} \|=1$ and the vectors $Dg_{\ell} (\xb)^\T(1,-\bar{w}_{\ell})$ and $Dg_{\ell}(\xb)^\T(1,\bar{w}_{\ell})$ are positively linearly dependent, that is, there exist scalars $\alpha\geq 0, \beta\geq 0$, at least one of them non-zero, such that
\[
	\alpha Dg_{\ell} (\xb)^{\T}(1, -\bar{w}_{\ell} ) + \beta Dg_{\ell} (\xb)^{\T}(1, \bar{w}_{\ell} )= 0.
\]
Defining $\tilde{w} \doteq \left( \frac{\beta - \alpha}{\alpha + \beta} \right) \bar{w}_{\ell} $, it follows that
\begin{equation}\label{socp:weak-R_equality}
\nabla g_{\ell, 0}(\xb)= - D\widehat{g}_{\ell} (\xb)^{\T}\tilde{w}.
\end{equation}
Note that $\|\tilde{w}\| \leq 1$, and that $\tilde{w} \not\in \Ker D\widehat{g}_{\ell}(\xb)^{\T}$; otherwise, $\nabla g_{\ell,0}(\xb) = 0$ and according to Lemma~\ref{lemma:positively_li} weak-Robinson's CQ fails.

Since $\Ker D\widehat{g}_{\ell}(\xb)^{\T}+\Im D\widehat{g}_{\ell}(\xb)=\R^{m_{\ell}-1}$, there exists some $v\in \Ker D\widehat{g}_{\ell}(\xb)^{\T}$ and some $d\in\R^{n}$ such that $\tilde{w}=v+D\widehat{g}_{\ell}(\xb)d$. Note that $D\widehat{g}_{\ell}(\xb)d\neq 0$, otherwise we would have that $\tilde{w} \in \Ker D\widehat{g}_{\ell}(\xb)^{\T}$. In addition, $0\neq\tilde{w} - v = \mathcal{P}_{\Im D\widehat{g}_{\ell}(\xb)}(\tilde{w})$ and by the non-expansiveness of the projection, we obtain $0<\|\tilde{w} - v\| \leq \|\tilde w\|\leq1$.

Now, proceeding similarly to the proof of Proposition~\ref{prop:ndgsurjective}, consider the sequence $\seq{x}$ given by $x^{k} \doteq \xb + t_{k}d$, for any positive scalars sequence $\{t_{k}\}_{k\in \N}\to 0^+$, and consider the Taylor expansion of $\widehat{g}_{\ell}(x^{k})$ around $\xb$:
\[
	\widehat{g}_{\ell}(x^{k})=t_{k} D\widehat{g}_{\ell}(\xb)d + o(t_{k}).
\]
Since $D\widehat{g}_{\ell}(\xb)d \neq 0$, it follows that there exists some $k_{0} \in \N$ such that $\widehat{g}_{\ell}(x^{k})\neq 0$ for every $k > k_{0}$, which implies that its eigenvectors, \si{$u_1(g_{\ell}(x^{k}))$ and $u_2(g_{\ell}(x^{k}))$} \jo{$u_1(g_{\ell}(x^{k}))$ and \linebreak $u_2(g_{\ell}(x^{k}))$}, are uniquely determined from \eqref{socp:autovals} for every $k > k_{0}$. Then we obtain that
\[
	\frac{\widehat{g}_{\ell}(x^k)}{\|\widehat{g}_{\ell}(x^k)\|}=\frac{D\widehat{g}_{\ell}(\xb)d+o(t_k)/t_k}{\|D\widehat{g}_{\ell}(\xb)d+o(t_k)/t_k\|} \to  \frac{\tilde{w}-v}{\|\tilde{w}-v\|}.
\]
It follows that
\[
	\lim_{k \rightarrow \infty} u_{1}(g_{\ell}(x^{k})) = \frac{1}{2}\left(1, - \frac{\tilde{w}-v}{\|\tilde{w}-v\|}\right) \si{\mand}\ima{\mand} \jo{ \] and \[ } \lim_{k \rightarrow \infty} u_{2}(g_{\ell}(x^{k})) = \frac{1}{2}\left(1, \frac{\tilde{w}-v}{\|\tilde{w}-v\|}\right)
\]
and, by weak-Robinson's CQ, the vectors \si{$Dg_{\ell}(\xb)^{\T}\left(1, - \frac{\tilde{w}-v}{\|\tilde{w}-v\|}\right)$ and $Dg_{\ell}(\xb)^{\T}\left(1, \frac{\tilde{w}-v}{\|\tilde{w}-v\|}\right)$} \jo{$Dg_{\ell}(\xb)^{\T}\left(1, - \frac{\tilde{w}-v}{\|\tilde{w}-v\|}\right)$ and \linebreak $Dg_{\ell}(\xb)^{\T}\left(1, \frac{\tilde{w}-v}{\|\tilde{w}-v\|}\right)$} are positively linearly independent. However, the following system in the variables $a$ and $b$:
\begin{eqnarray*}
0 &=& aDg_{\ell}(\xb)^{\T}\left(1, \frac{\tilde{w}-v}{\|\tilde{w}-v\|}\right) + b Dg_{\ell}(\xb)^{\T}\left(1, -\frac{\tilde{w}-v}{\|\tilde{w}-v\|}\right)  \\
 &=& a\nabla g_{\ell,0}(\xb) + \frac{a}{\|\tilde{w}-v\|}D\widehat{g}_{\ell}(\xb)^{\T}\tilde{w} + b\nabla g_{\ell,0}(\xb) - \frac{b}{\|\tilde{w}-v\|}D\widehat{g}_{\ell}(\xb)^{\T}\tilde{w}\\
 &=& \left[a\left(\frac{1}{\|\tilde{w}-v\|} - 1 \right) - b \left(\frac{1}{\|\tilde{w}-v\|}+1 \right) \right]D\widehat{g}_{\ell}(\xb)^{\T}\tilde{w} 
\end{eqnarray*}
has a nontrivial solution $a=1/\|\tilde{w}-v\| + 1 > 0$ and $b =1/\|\tilde{w}-v\| - 1 \geq 0$, which is a contradiction. In the second equality of the above chain, we used $D\widehat{g}_\ell(\xb)^\T v=0$; and in the last equality, we used~\eqref{socp:weak-R_equality}.
\jo{\qed} \end{proof}

\begin{remark}
The same strategy of the previous proof actually allows proving a slightly stronger result: if a feasible point $\xb$ satisfies weak-Robinson's CQ, then for each index ${j}\in I_0(\xb)$ the constraint
\[
	g_{\ell}(x) \in \Lor_{m_{\ell}}, \ \forall \ell\in I_B(\xb)\cup\{j\}
\] satisfies Robinson's CQ at $\xb$. In particular, if $I_0(\xb)$ is a singleton, then weak-Robinson's CQ and Robinson's CQ are equivalent, which is somewhat remarkable and highlights the ``robustness'' of Robinson's CQ. The situation where $I_0(\xb)$ is a singleton has been previously considered, for instance, in~\cite{mordukhovichauglag,outrataramirez}. In the general case we were not able to prove nor to provide a counterexample for the equivalence between Robinson's CQ and weak-Robinson's CQ. 
\end{remark}


\section{Constant Rank Conditions for NSOCP}\label{sec:cr}

Let us consider an NLP problem for a moment; that is,~\eqref{NSOCP} with $m_1=\ldots=m_q=1$, whose constraints take the form $g_1(x)\geq 0, \ldots, g_q(x)\geq 0$, and let $\xb\in \F$. We recall that the nondegeneracy condition in this case is equivalent to LICQ, which holds when the family of vectors
\begin{equation}\label{eq:licq}
	\left\{ \nabla g_{j}(\xb)\right\}_{{j}\in I_0(\xb)}
\end{equation}
has full rank. The constant rank constraint qualification (CRCQ) condition can be considered a relaxation of LICQ, since it allows the rank of~\eqref{eq:licq} to be incomplete, as long as the rank of the family
\begin{equation}\label{eq:crcq}
	\left\{ \nabla g_{j}(x)\right\}_{{j}\in J_0}
\end{equation}
remains constant in a neighborhood of $\xb$, for every subset $J_0\subseteq I_0(\xb)$. \si{Qi and Wei}\jo{Qi and Wei}~\cite{qiwei} described CRCQ in a slightly different but equivalent way: CRCQ holds at $\xb$ if, for every $J_0\subseteq I_0(\xb)$, if \eqref{eq:crcq} is linearly dependent at $\xb$, then it must also remain linearly dependent for every $x$ in a neighborhood of $\xb$. Similarly, Robinson's CQ is equivalent to the positive linear independence of~\eqref{eq:licq}, and the relaxation of it in the same style as CRCQ characterizes the constraint qualification known as constant positive linear dependence (CPLD)\ima{ -- see}~\cite{Andreani2005}. That is, CPLD holds at $\xb$ if, for every subset $J_0\subseteq I_0(\xb)$, if \eqref{eq:crcq} is positively linearly dependent at $\xb$, then it must remain linearly dependent for every $x$ in a neighborhood of $\xb$.

Extending such constant rank-type constraint qualifications to the context of NSOCP with an arbitrary dimension is not trivial. For instance, it is known that linear second-order cone programming problems may present a positive or infinite duality gap even when the primal problem is bounded, feasible and its solution is attained. This means that ``constraint linearity'' is not a constraint qualification in NSOCP, contrary to NLP. However, note that any kind of constant rank condition that depends solely on the derivatives of the constraint functions will always be satisfied for every linear problem, implying it cannot be a constraint qualification -- see, for instance, ~\cite{ZZerratum}. See also~\cite[Section 2.1]{facial-crcq} for a detailed discussion on this issue regarding linear problems.

In a previous work~\ima{(}\cite{seq-crcq-sdp}\ima{)} we noticed that weak-nondegeneracy imbues the cone structure into the constraint functions, allowing us to properly define a constant rank-type condition that is not retained by the linearity bottleneck. In this section, we shall follow a similar approach, making the necessary adaptations to overcome the difficulties that arise from the particularities of the second-order cone along the way.

\subsection{Weak Constant Rank Conditions}

With\jo{ the definitions of} weak-nondegeneracy and weak-Robinson's CQ for NSOCP at hand, we can present new extensions of \nlpcrcq{} and \nlpcpld{} for NSOCP by means of a simple relaxation of Definition~\ref{socp:wndgrobdef}, in the same lines as in NLP. Basically, the idea is to demand every subfamily of~\eqref{socp:setndg} to locally retain its (positive) linear dependence. So let us define, for any sets $J_B,J_{-},J_{+}\subseteq\{1,\dots,q\}$ such that $\widehat{g}_j(x)\neq 0$ for every $j\in J_B$, the family of vectors
\si{
\begin{equation}\label{socp:familyd}
	\mathcal{D}_{J_B,J_{-},J_{+}}\left(x,w\right)\doteq \left\{Dg_{j}(x)^\T u_1(g_j(x))\right\}_{{j}\in J_B}\bigcup \left\{Dg_{j}(x)^\T (1,-w_j)\right\}_{{j}\in J_-}\bigcup \left\{Dg_{j}(x)^\T (1,w_j)\right\}_{{j}\in J_{+}}
\end{equation}
}
\ima{
\begin{eqnarray}\label{socp:familyd}
\mathcal{D}_{J_B,J_{-},J_{+}}\left(x,w\right) &\doteq& \left\{Dg_{j}(x)^\T u_1(g_j(x))\right\}_{{j}\in J_B}\bigcup \left\{Dg_{j}(x)^\T (1,-w_j)\right\}_{{j}\in J_-} \nonumber \\ 
 & & \bigcup \left\{Dg_{j}(x)^\T (1,w_j)\right\}_{{j}\in J_{+}}
\end{eqnarray}
}
\jo{
\begin{eqnarray}\label{socp:familyd}
\mathcal{D}_{J_B,J_{-},J_{+}}\left(x,w\right) &\doteq& \left\{Dg_{j}(x)^\T u_1(g_j(x))\right\}_{{j}\in J_B}\bigcup \left\{Dg_{j}(x)^\T (1,-w_j)\right\}_{{j}\in J_-} \nonumber \\ 
 & & \bigcup \left\{Dg_{j}(x)^\T (1,w_j)\right\}_{{j}\in J_{+}}
\end{eqnarray}
}
where $w=[w_j]_{j\in J_-\cup J_+}$. Above, the index set $J_B$ refers to an arbitrary subset of $I_B(\xb)$, and the indices $J_{-}$ and $J_{+}$ both refer to $I_0(\xb)$, but with distinct eigenvectors; see~\eqref{socp:setndg}.

\begin{definition}[\socpwcrcq{} and \socpwcpld{}]\label{socp:crcq}
	We say that a feasible point $\xb$ of~\eqref{NSOCP} satisfies the:
	\begin{itemize}
		\item \emph{Weak constant rank constraint qualification} (\socpwcrcq{}) if the following holds: for every sequence $\seq{x}\to \xb$, there exists some $I\subseteq_{\infty} \N$, and convergent eigenvector sequences
\[
	\{u_1(g_{j}(x^k))\}_{k\in I}\to \frac{1}{2}(1,-\bar{w}_{j}) \ \mand \ \{u_2(g_{j}(x^k))\}_{k\in I}\to \frac{1}{2}(1,\bar{w}_{j}),\] with $\|\bar{w}_{j}\| = 1$, for all ${j}\in I_0(\xb)$, such that for all subsets $J_B\subseteq I_B(\xb)$ and $J_{-},J_{+}\subseteq I_0(\xb)$, we have that: if the family of vectors $\mathcal{D}_{J_B,J_{-},J_{+}}(\xb,\bar{w})$ is linearly dependent, then $\mathcal{D}_{J_B,J_{-},J_{+}}(x^k,w^k)$ remains linearly dependent for all $k\in I$ large enough, where $\bar{w}=[\bar{w}_j]_{j\in J_-\cup J_+}$ and $w^k=[w^k_j]_{j\in J_-\cup J_+}$ satisfies
\begin{equation}\label{socp:setJ0}
u_1(g_j(x^k))=\frac{1}{2}(1,-w^k_j) \quad\textnormal{and}\quad u_2(g_j(x^k))=\frac{1}{2}(1,w^k_j)
\end{equation}
for each $j\in J_-\cup J_+$.
	\item \emph{Weak constant positive linear dependence} (\socpwcpld{}) condition if the following holds: for every sequence $\seq{x}\to \xb$, there is some $I\subseteq_{\infty} \N$, and convergent eigenvector sequences \[
	\{u_1(g_{j}(x^k))\}_{k\in I}\to \frac{1}{2}(1,-\bar{w}_{j}) \ \mand \ \{u_2(g_{j}(x^k))\}_{k\in I}\to \frac{1}{2}(1,\bar{w}_{j}),\]
with $\|\bar{w}_{j}\| = 1$, for all ${j}\in I_0(\xb)$, such that for all subsets $J_B\subseteq I_B(\xb)$ and $J_{-},J_{+}\subseteq I_0(\xb)$, we have that: if \ima{\linebreak} $\mathcal{D}_{J_B,J_{-},J_{+}}(\xb,\bar{w})$ is positively linearly dependent, then $\mathcal{D}_{J_B,J_{-},J_{+}}(x^k,w^k)$ is linearly dependent for all $k\in I$ large enough, where $\bar{w}$ and $w^k$ are as in the previous item.
	\end{itemize}			
	
\end{definition}
	There are some features about Definition~\ref{socp:crcq} that should be highlighted for a better understanding of it. First, \socpwcrcq{} fully recovers \nlpcrcq{} when we set $m_{j}=1$ for every ${j}\in \{1,\ldots,q\}$ -- see also Remark~\ref{socp:remark0} for a clarification about the case $m_{j}=1$. Similarly, note that \socpwcpld{} recovers \nlpcpld{} in the same setting. Second, in view of Corollary~\ref{socp:nondegen}, we see that \socpwcrcq{} is implied by (weak-)nondegeneracy as in Definition~\ref{socp:wndgrobdef}, and \socpwcpld{} is implied by both (weak-)Robinson's CQ and \socpwcrcq{}. However, due to such equivalence in NLP, those implications in the conic setting are strict (see Example~\ref{socp:crnotweakcq} below and~\cite[Counterexample 4.2]{Andreani2005}, respectively). Third, we point out that \socpwcrcq{} is not comparable with (weak-)Robinson's CQ (see, for instance,~\cite[Examples 2.1 and 2.2]{janin}).
\begin{remark}
To fix ideas, let us consider a single conic constraint $g(x)\in \Lor_m$ at the point $\xb\in \F$. First, suppose that $g(\xb)=0$ and take any sequence $\seq{x}\to \xb$. We consider a partition of $\mathbb{N}$ as follows:
\begin{itemize}
\item $\mathcal{N}_0\doteq \{k\in \N\colon \widehat{g}(x^k)=0\}$. For $k\in\mathcal{N}_0$, we can choose
\[
	u_1(g(x^k))=\frac{1}{2}\left(1,-w^k\right)\quad \mand \quad u_2(g(x^k))=\frac{1}{2}\left(1,w^k\right),\] for any $w^k$ such that $\|w^k\|=1$. When $\mathcal{N}_0$ is infinite, \socpwcrcq{} demands, in particular, the existence of a choice of $\{w_k\}_{k\in\mathcal{N}_0}$ with some convergent subsequence $\{w^k\}_{k\in I}\to \bar{w}$, $I\subseteq_\infty\mathcal{N}_0$, such that \jo{\begin{center}}$Dg(\xb)^\T(1,(-1)^i\bar{w})=0$\jo{\end{center}} only if \jo{\begin{center}}$Dg(x^k)^\T \left(1,(-1)^i w^k\right)=0$\jo{\end{center}} for all large $k\in I$, $i\in \{1,2\}$; and, in addition, if $Dg(\xb)^\T(1,-\bar{w})$ and $Dg(\xb)^\T(1,\bar{w})$ are linearly dependent, then \si{$Dg(x^k)^\T \left(1,-w^k\right)$ and $Dg(x^k)^\T \left(1,w^k\right)$} \jo{$Dg(x^k)^\T \left(1,-w^k\right)$ and \linebreak $Dg(x^k)^\T \left(1,w^k\right)$} must also be linearly dependent, for every sufficiently large $k\in I$.

\item $\mathcal{N}_1\doteq \{k\in \N\colon \widehat{g}(x^k)\neq 0\}$. This case is similar to the previous one, except that there is no freedom in the choice of $w^k$, as it  is uniquely determined by $w^k={\widehat{g}(x^k)}/{\|\widehat{g}(x^k)\|}$, for every $k\in \mathcal{N}_1$.
\end{itemize}

The reason why both eigenvectors are taken into consideration is that both eigenvalues of $g(\xb)$ are zero, in this case. Naturally, in case $g(\xb)\in \bd_+\Lor_m$, we have only one zero eigenvalue, which is $\lambda_1(g(\xb))$, then \socpwcrcq{} simply demands the vector 
\[
	Dg(x)^\T u_1(g(x))=\frac{1}{2}Dg(x)^\T\left( 1, -\frac{\widehat{g}(x)}{\|\widehat{g}(x)\|} \right)
\]
to be either non-zero at $\xb$ or equal to zero in a whole neighborhood of $\xb$. Note that this coincides with the naive approach \ima{of~}\cite{crcq-naive}, obtained by reducing the problem to an NLP. This observation remains true for more than one conic constraint as long as $I_0(\xb)=\emptyset$. See also Remark~\ref{naive-remark} below.
\end{remark}

Now, let us check how Definition~\ref{socp:crcq} behaves when it is applied to \ima{the} example\ima{ of}~\cite[Equation 2]{ZZerratum}, which was used to refute the CRCQ proposal of~\cite{ZZ}.

\begin{example}[Equation 2 from~\cite{ZZerratum}]
Consider the problem
\begin{equation}\label{exZZerratum}
  \begin{aligned}
    & \underset{x \in \mathbb{R}}{\textnormal{Minimize}}
    & & -x, \\
    & \textnormal{subject to}
    & & g(x)\doteq (x,x+x^2)\in \Lor_2.\\
  \end{aligned}
\end{equation}
and its unique feasible point $\xb\doteq 0$, which does not satisfy the KKT conditions. Our aim is to show that Definition~\ref{socp:crcq} is not satisfied at $\xb$. To do so, it suffices to take any sequence $\seq{x}\to 0$ such that $x^k>0$ for all $k\in \N$. In this case, for each $k\in \N$, the eigenvectors of $g(x^k)$ are uniquely determined by
\begin{equation*}
		u_1(g(x^k))=\frac{1}{2}\left(1,-\frac{x^k+(x^k)^2}{|x^k+(x^k)^2|}\right)=\frac{1}{2}(1,-1) 
\end{equation*}
and
\begin{equation*}
		u_2(g(x^k))=\frac{1}{2}\left(1,\frac{x^k+(x^k)^2}{|x^k+(x^k)^2|}\right)=\frac{1}{2}(1,1),
\end{equation*}
so there is only one trivial limit point for each eigenvector sequence; also, $w^k=\bar{w}=1$ for every $k\in\N$. However, note that 
\[
	Dg(\overline{x})^\T(1,-\bar{w}) = 0
	\quad\textnormal{but}\quad
	Dg(x^k)^\T (1,-w^k) = -2x^k,
\]
so for $J_B\doteq I_B(\xb)=\emptyset$, $J_{-}\doteq\{1\}$, and $J_{+} \doteq \emptyset$, we have $\mathcal{D}_{J_B,J_-,J_+}(x^k,w^k)=\{-2x^k\}$ is linearly independent for every $k\in \N$ whereas $\mathcal{D}_{J_B,J_-,J_+}(\xb,\bar{w})=\{0\}$ is (positively) linearly dependent. Thus, neither \socpwcrcq{} nor \socpwcpld{} are satisfied at $\xb$.
\end{example}

As mentioned before, weak-nondegeneracy and weak-Robinson's CQ are strictly stronger than \socpwcrcq{} and \socpwcpld{}, respectively. It is clear that the former implies the latter, so let us prove the ``strict'' statement:
\begin{example}[Weak-CRCQ is weaker than weak-nondegeneracy and does not imply weak-Robinson]\label{socp:crnotweakcq}
Consider the constraint 
\[
	g(x)\doteq \left(-x,x,x\right)\in \Lor_3,
\]
and its unique feasible point $\xb \doteq 0$. To prove that~\socpwcpld{} holds at $\xb$, let $\seq{x}\to \xb$ be any sequence. Just as in Example~\ref{socp:exwndgnotndg2}, there are three cases to be considered but it suffices to analyse one of them, since the other cases follow analogously. Then, for simplicity, we assume that there is some $I\subseteq_{\infty} \N$ such that $x^k>0$ for every $k\in I$, and in this case the eigenvectors of $g(x^k)$ are uniquely determined by 
\[
	u_1(g(x^k))=\frac{1}{2}\left( 1 , -\frac{1}{\sqrt{2}}, -\frac{1}{\sqrt{2}} \right) \ \mand \ u_2(g(x^k))=\frac{1}{2}\left( 1 , \frac{1}{\sqrt{2}}, \frac{1}{\sqrt{2}} \right),
\]
leading to $w^k=\bar{w}=\left(\frac{1}{\sqrt{2}},\frac{1}{\sqrt{2}}\right)$. Then,
\[
	Dg(x^k)^\T (1,(-1)^i w^k) = Dg(\xb)^\T (1,-\bar{w}) = \left(-1 + (-1)^i \frac{2}{\sqrt{2}} \right) < 0
\]
for each $i\in \{1,2\}$. Then, the family~\eqref{socp:setndg} will have the same sign, making it (positively) linearly dependent, so weak-Robinson's CQ and \si{weak-nondegeneracy} \jo{weak-\linebreak nondegeneracy} both fail at $\xb$, without violating the weak-CRCQ and weak-CPLD requirements since in this example \jo{ \[ }\ima{ $ }\si{ $ }\mathcal{D}_{J_B,J_-,J_+}(x^k,w^k)=\mathcal{D}_{J_B,J_-,J_+}(\xb,\bar{w})\si{ $ }\ima{ $ }\jo{ \] } for every $k\in I$ regardless of $J_B, J_-$, and $J_+$.
\end{example}

Example~\ref{socp:crnotweakcq} can also be used to verify that \socpwcrcq{} does not imply Robinson's CQ. In fact, Robinson's CQ does not imply \socpwcrcq{} either, making them independent. Let us show this with another example:

\begin{example}[Weak-Robinson does not imply weak-CRCQ]\label{socp:exrobcrcq}
Consider the constraint
\[
	g(x)\doteq (2x_{1},x_{2}^{2})\in \Lor_2
\]
at $\xb\doteq 0.$ To see that $\xb$ violates~\socpwcrcq{}, it is enough to take any sequence $\seq{x}\to \xb$ such that $x^k\neq 0$ for every $k\in\N$. Then, the eigenvectors of $g(x^k)$ must be
\[
	u_1(g(x^k))=\frac{1}{2}(1,-1) \quad \mand \quad u_2(g(x^k))=\frac{1}{2}(1,1),
\]
which are defined by $w^k=\bar{w}=1$ for all $k\in \N$. This implies that the vectors $Dg(x^k)^\T (1,-w^k)=(1,-2x^k_2)$ and $Dg(x^k)^\T (1,w^k)=(1,2x^k_2)$ are linearly independent for all $k$, whereas the vectors \ima{\linebreak} $Dg(\xb)^\T (1,-\bar{w})=(1,0)$ and $Dg(\xb)^\T (1,\bar{w})=(1,0)$ are linearly dependent, violating~\socpwcrcq{}.

On the other hand, in view of Corollary~\ref{socp:nondegen}, it is easy to check that Robinson's CQ holds at $\xb$, since $Dg(\xb)^\T (1,-\bar{w})=(1,0)$ and $Dg(\xb)^\T (1,\bar{w})=(1,0)$ are positively linearly independent for every $\bar{w}\in\R$ with $|\bar{w}|=1$.
\end{example}

Finally, we shall prove that \socpwcpld{} (and by consequence \socpwcrcq{}, weak-nondegeneracy, and weak-Robinson's CQ) is a constraint qualification for~\eqref{NSOCP} employing a result from~\cite{Santos2019}, regarding the output sequences of an external penalty method:

\begin{theorem}\label{thm:minakkt}
	Let $\xb$ be a local minimizer of~\eqref{NSOCP}, and let $\{\rho_k\}_{k\in \N}\to +\infty$. Then, there exists some sequence $\seq{x}\to \xb$, such that for each $k\in \N$, $x^k$ is a local minimizer of the regularized penalized function 
	\begin{equation}\label{penfunc}
		f(x)+\frac{1}{2}\enorm{x-\xb}^2 
	  	+ \frac{\rho_k}{2} \left(\sum_{{j}=1}^q \norm{\mathcal{P}_{\Lor_{m_{j}}}(-g_{j}(x))}^2\right).
	\end{equation}
\end{theorem}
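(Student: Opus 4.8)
The plan is to carry out the classical external penalty argument, localized to a neighborhood of $\xb$ and stabilized by the regularizing term $\tfrac{1}{2}\enorm{x-\xb}^2$; this is exactly the penalty scheme analyzed in~\cite{Santos2019}, and I would either invoke that result directly or reproduce its proof as follows. Write
\[
	P_k(x)\doteq f(x)+\tfrac{1}{2}\enorm{x-\xb}^2+\tfrac{\rho_k}{2}\sum_{j=1}^q\norm{\mathcal{P}_{\Lor_{m_{j}}}(-g_{j}(x))}^2.
\]
First I would record two properties of the penalty. Since $\mathcal{P}_{\Lor_{m_{j}}}$ is continuous (as noted right after its definition) and each $g_{j}$ is continuously differentiable, $P_k$ is continuous. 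Moreover, because $\Lor_{m_{j}}$ is self-dual, one has $\mathcal{P}_{\Lor_{m_{j}}}(-g_{j}(x))=0$ if, and only if, $-g_{j}(x)$ lies in the polar cone $-\Lor_{m_{j}}$, i.e. if and only if $g_{j}(x)\in\Lor_{m_{j}}$; hence the penalty vanishes exactly on $\F$, and in particular $P_k(\xb)=f(\xb)$.

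Since $\xb$ is a local minimizer of~\eqref{NSOCP}, I would fix $\delta>0$ such that $f(\xb)\le f(x)$ for every feasible $x\in\cl(B(\xb,\delta))$, and take $x^k$ to be a global minimizer of $P_k$ over the compact set $\cl(B(\xb,\delta))$, which exists by the Weierstrass theorem. Comparing with the feasible center gives
\[
	f(x^k)+\tfrac{1}{2}\enorm{x^k-\xb}^2+\tfrac{\rho_k}{2}\sum_{j=1}^q\norm{\mathcal{P}_{\Lor_{m_{j}}}(-g_{j}(x^k))}^2=P_k(x^k)\le P_k(\xb)=f(\xb).
\]
As $f$ is bounded below on the compact set $\cl(B(\xb,\delta))$ and $\rho_k\to+\infty$, the penalty term is squeezed to $0$; by continuity, every limit point $x^\ast$ of the bounded sequence $\seq{x}$ is therefore feasible.

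The step I expect to be the main obstacle is upgrading ``every accumulation point is feasible'' to the sharper conclusion $x^k\to\xb$, and this is precisely where the regularizing term earns its place. Along a subsequence converging to a limit point $x^\ast$, dropping the nonnegative penalty in the displayed inequality and passing to the limit gives $f(x^\ast)+\tfrac{1}{2}\enorm{x^\ast-\xb}^2\le f(\xb)$; on the other hand, feasibility of $x^\ast$ together with the local minimality of $\xb$ yields $f(x^\ast)\ge f(\xb)$. Combining the two forces $\enorm{x^\ast-\xb}=0$, so $x^\ast=\xb$. Since the bounded sequence $\seq{x}$ has $\xb$ as its only accumulation point, the whole sequence converges to $\xb$.

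It remains to see that the $x^k$ are genuine local minimizers of $P_k$. Because $x^k\to\xb$ and $\xb$ is the center of $B(\xb,\delta)$, there is $k_0\in\N$ with $x^k\in\int B(\xb,\delta)$ for all $k\ge k_0$; for such $k$ the ball constraint is inactive, so $x^k$ minimizes $P_k$ on a full neighborhood of itself and is therefore a local minimizer of $P_k$ over $\R^n$. This holds for all $k\ge k_0$, which is precisely what is exploited when passing to the limit in the sequel, and matches the penalty framework of~\cite{Santos2019}.
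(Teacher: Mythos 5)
Your proof is correct and is precisely the classical localized external-penalty argument that the paper itself does not write out but delegates to~\cite[Theorem 3.1]{Santos2019}: minimize~\eqref{penfunc} over $\cl(B(\xb,\delta))$, use $P_k(x^k)\le P_k(\xb)=f(\xb)$ to force feasibility of limit points, and let the regularizing term $\tfrac{1}{2}\enorm{x-\xb}^2$ pin the unique limit point to $\xb$ so that the ball constraint is eventually inactive. The only cosmetic mismatch is that your argument yields local minimality of $x^k$ only for $k\ge k_0$ rather than for every $k\in\N$ as literally stated, which is harmless here since only the tail of the sequence is used in Theorem~\ref{socp:mincrcqkkt}.
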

\begin{proof}
The proof of this theorem is contained in the proof of~\cite[Theorem 3.1]{Santos2019}.
\jo{\qed} \end{proof}

Observe that the gradient of~\eqref{penfunc} can be computed as
\[
	\nabla_x L\left(x,\rho_k\mathcal{P}_{\Lor_{m_1}}(-g_1(x)),\ldots,\rho_k\mathcal{P}_{\Lor_{m_q}}(-g_q(x))\right)+(x-\xb),
\]
for each $k\in \N$, which vanish at $x\doteq x^k$. So defining $\mu^k_{j}\doteq \rho_k\mathcal{P}_{\Lor_{m_{j}}}(-g_{j}(x^k))$, for all ${j}\in \{1,\ldots,q\}$, induces approximate Lagrange multiplier sequences associated with $\seq{x}$ -- see also~\cite{Santos2019}. Then, to prove that \socpwcpld{} is a CQ, it suffices to construct bounded approximate multiplier sequences out of $\seq{\mu_{j}}$. For convenience, we will prove a slightly more general result that also encompasses the convergence theory of an external penalty method under \socpwcpld{}; see~\cite{Santos2019} for details.
\begin{theorem}[Weak-Robinson, weak-CRCQ and weak-CPLD are constraint qualifications]\label{socp:mincrcqkkt}
	Let $\{\rho_k\}_{k\in \N}\to \infty$ and $\seq{x}\to\xb\in\F$ be such that
	\[
		\nabla_x L\left(x^k,\rho_k\mathcal{P}_{\Lor_{m_1}}(-g_1(x^k)),\ldots,\rho_k\mathcal{P}_{\Lor_{m_q}}(-g_q(x^k))\right)\to 0,
	\] 
	and suppose that \socpwcpld{} holds at $\xb$. Then,  $\xb$ satisfies the KKT conditions.
	Moreover, any local minimizer of~\eqref{NSOCP} that satisfies \socpwcpld{} is a KKT point. 
\end{theorem}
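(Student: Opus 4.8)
The plan is to rewrite the approximate stationarity hypothesis as an approximate KKT system expressed through the family~\eqref{socp:familyd}, and then to combine \socpwcpld{} with Carath\'eodory's Lemma to bound the multiplier estimates and pass to the limit. First, set $\mu^k_{j}\doteq\rho_k\mathcal{P}_{\Lor_{m_{j}}}(-g_{j}(x^k))\in\Lor_{m_{j}}$. A direct computation from~\eqref{socp:autovals} gives $\mathcal{P}_{\Lor_{m_{j}}}(-g_j(x^k))=[-\lambda_1(g_j(x^k))]_+\,u_1(g_j(x^k))+[-\lambda_2(g_j(x^k))]_+\,u_2(g_j(x^k))$, hence $\mu^k_j=\alpha^k_j\,u_1(g_j(x^k))+\beta^k_j\,u_2(g_j(x^k))$ with $\alpha^k_j,\beta^k_j\geq0$. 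By continuity of the eigenvalues and $x^k\to\xb$, for $j\in I_{\int}(\xb)$ we get $\mu^k_j=0$ for large $k$, while for $j\in I_B(\xb)$ the coefficient $\beta^k_j$ vanishes for large $k$, so $\mu^k_j=\alpha^k_j\,u_1(g_j(x^k))$. Writing $r^k\doteq\nabla_xL(x^k,\mu^k_1,\dots,\mu^k_q)\to0$, the hypothesis says that $\nabla f(x^k)-r^k$ is a nonnegative combination of the vectors $Dg_j(x^k)^\T u_1(g_j(x^k))$ for $j\in I_B(\xb)$ and of $Dg_j(x^k)^\T u_1(g_j(x^k))$, $Dg_j(x^k)^\T u_2(g_j(x^k))$ for $j\in I_0(\xb)$. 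These are exactly the members of $\mathcal{D}_{I_B(\xb),I_0(\xb),I_0(\xb)}(x^k,w^k)$ up to the positive factor $\tfrac12$, with $w^k$ as in~\eqref{socp:setJ0}, so (positive) linear (in)dependence transfers between the two representations.

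Next, apply \socpwcpld{} to $\seq{x}$ to obtain $I\subseteq_\infty\N$ and convergent eigenvector sequences $u_1(g_j(x^k))\to\tfrac12(1,-\bar w_j)$ and $u_2(g_j(x^k))\to\tfrac12(1,\bar w_j)$ for $j\in I_0(\xb)$; for $j\in I_B(\xb)$ the convergence $u_1(g_j(x^k))\to u_1(g_j(\xb))$ is automatic, since $\widehat g_j(\xb)\neq0$. For each $k\in I$, Lemma~\ref{lem:carath} replaces the nonnegative combination above by one over a linearly independent subfamily, preserving positivity of the coefficients; as there are only finitely many subfamilies, after passing to an infinite subset of $I$ I may assume the chosen index sets $J_B\subseteq I_B(\xb)$ and $J_-,J_+\subseteq I_0(\xb)$ are fixed. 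Thus, for all $k\in I$, $\nabla f(x^k)-r^k=\sum_\ell c^k_\ell v^k_\ell$ with $c^k_\ell>0$ and the vectors $\{v^k_\ell\}$, which are precisely the members of $\mathcal{D}_{J_B,J_-,J_+}(x^k,w^k)$, linearly independent.

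Finally, I split into cases according to the boundedness of $\{c^k_\ell\}_{k\in I}$. If these coefficients are bounded, I pass to a subsequence along which $c^k_\ell\to c_\ell\geq0$; since $v^k_\ell$ converges to the corresponding member of $\mathcal{D}_{J_B,J_-,J_+}(\xb,\bar w)$, $r^k\to0$ and $\nabla f(x^k)\to\nabla f(\xb)$, I obtain $\nabla f(\xb)$ as a nonnegative combination of these limit vectors. Setting $\bar\mu_j\doteq c_j\,u_1(g_j(\xb))$ for $j\in J_B$, $\bar\mu_j\doteq\alpha_j\tfrac12(1,-\bar w_j)+\beta_j\tfrac12(1,\bar w_j)$ for $j\in I_0(\xb)$ with the limiting coefficients, and $\bar\mu_j\doteq0$ otherwise, yields $\bar\mu_j\in\Lor_{m_j}$ satisfying $\langle\bar\mu_j,g_j(\xb)\rangle=0$: indeed $g_j(\xb)=0$ for $j\in I_0(\xb)$, and for $j\in I_B(\xb)$ one uses $\lambda_1(g_j(\xb))=0$ together with the orthogonality of $u_1(g_j(\xb))$ and $u_2(g_j(\xb))$. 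This is exactly the KKT system. If instead $\{c^k_\ell\}$ is unbounded, divide by $M_k\doteq\max_\ell c^k_\ell\to\infty$: the left-hand side tends to $0$ while the normalized coefficients converge, up to a subsequence, to nonnegative numbers not all zero, so $\mathcal{D}_{J_B,J_-,J_+}(\xb,\bar w)$ is positively linearly dependent. By \socpwcpld{} this forces $\mathcal{D}_{J_B,J_-,J_+}(x^k,w^k)$ to be linearly dependent for all large $k\in I$, contradicting the linear independence secured by Carath\'eodory; hence this case cannot occur and $\xb$ is a KKT point. The ``moreover'' claim then follows by applying this argument to the sequence furnished by Theorem~\ref{thm:minakkt}, whose penalized gradient equals $\nabla_xL(x^k,\mu^k_1,\dots,\mu^k_q)+(x^k-\xb)$ and vanishes at $x^k$, so that $\nabla_xL(x^k,\mu^k_1,\dots,\mu^k_q)\to0$.

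I expect the main obstacle to be the careful bookkeeping of eigendecompositions: ensuring that the vectors appearing in the approximate stationarity are genuine members of $\mathcal{D}_{J_B,J_-,J_+}(x^k,w^k)$ for a choice of $w^k$ compatible with the convergent eigenvectors granted by \socpwcpld{}, especially at indices $j\in I_0(\xb)$ where $\widehat g_j(x^k)=0$ and the eigendecomposition is not unique, so that the positive linear dependence derived in the unbounded case lands exactly on the linearly independent subfamily selected by Carath\'eodory.
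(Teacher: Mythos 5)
Your proposal is correct and follows essentially the same route as the paper's proof: spectral decomposition of the penalty multipliers $\mu^k_j=\rho_k\mathcal{P}_{\Lor_{m_j}}(-g_j(x^k))$, elimination of the indices in $I_{\int}(\xb)$ and of $\beta^k_j$ for $j\in I_B(\xb)$, Carath\'eodory plus the pigeonhole principle to fix $J_B,J_-,J_+$, boundedness of the reduced coefficients by contradiction with \socpwcpld{}, and the ``moreover'' part via Theorem~\ref{thm:minakkt}. The bookkeeping subtlety you flag at the end (choosing the eigendecomposition of $\mu^k_j$ at indices where $\widehat g_j(x^k)=0$ to be compatible with the one granted by \socpwcpld{}) is real and is handled in the paper by taking an arbitrary decomposition and noting the freedom exists precisely where needed, so your treatment matches.
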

\begin{proof}
	For each $k\in \N$ and ${j}\in\{1,\ldots,q\}$, define $\mu^k_{j}\doteq \rho_k\mathcal{P}_{\Lor_{m_{j}}}(-g_{j}(x^k))$. Then, we have
	\begin{equation}\label{socp:approxstat}
		\nabla f(x^k)-\sum_{{j}=1}^q Dg_{j}(x^k)^\T \mu^k_{j}\to 0.
	\end{equation}
	Let us consider an arbitrary spectral decomposition of $\mu^k_{j}$:
	\[	
		\mu^k_{j}=\alpha_{j}^ku_1(g_{j}(x^k))+\beta_{j}^ku_2(g_{j}(x^k)),
	\]
	where $\alpha_{j}^k=[-\rho_k\lambda_1(g_{j}(x^k))]_+\geq 0$ and $\beta_{j}^k=[-\rho_k\lambda_2(g_{j}(x^k))]_+\geq0$. Define
	\begin{equation}\label{socp:decompstat}
		\begin{aligned}
		\Psi^k \doteq \jo{&} \sum_{{j}\in I_B(\xb)\cup I_0(\xb)} \alpha_{j}^k Dg_{j}(x^k)^\T u_1(g_{j}(x^k))
		+ \jo{ \\ & +}
		\sum_{{j}\in I_0(\xb)} \beta_{j}^k Dg_{j}(x^k)^\T u_2(g_{j}(x^k))
		\end{aligned}
	\end{equation}
	and note that~\eqref{socp:approxstat} can be equivalently stated as $\nabla f(x^k)-\Psi^k\to 0$. 
	%
	By Carath{\'e}odory's Lemma~\ref{lem:carath}, for each $k\in \N$, there exists some $J^k_B\subseteq I_B(\xb)$ and $J^k_-,J^k_+\subseteq I_0(\xb)$ such that
\begin{equation}\label{eq:setDmidproof}
		\left\{ Dg_{j}(x^k)^\T u_1(g_{j}(x^k))\right\}_{{j}\in J_B^k\cup J_{-}^k}
		\bigcup 
		\left\{Dg_{j}(x^k)^\T u_2(g_{j}(x^k)) \right\}_{{j}\in J^k_+}
	\end{equation}
	is linearly independent and 
	\[
		\Psi^k= \sum_{{j}\in J^k_B\cup J_{-}^k} \tilde{\alpha}_{j}^k Dg_{j}(x^k)^\T u_1(g_{j}(x^k))+
		\sum_{{j}\in  J^k_+} 
		\tilde{\beta}_{j}^k Dg_{j}(x^k)^\T u_2(g_{j}(x^k)),
	\]
	for some new scalars $\tilde{\alpha}_{j}^k\geq0$, ${j}\in J_B^k\cup J_{-}^k$, and $\tilde{\beta}_{j}^k\geq0$, $j\in J_{+}^k$. By the infinite pigeonhole principle, we can take a subsequence if necessary such that $J_B^k$, $J_{-}^k$, and $J_{+}^k$ do not depend on $k$; that is, we can assume without loss of generality that $J_B^k=J_B$, $J_-^k=J_{-}$, and $J_+^k= J_{+}$, for every $k\in \N$.
	
	We claim that the sequences $\seq{\tilde{\alpha}_{j}}$ are bounded for every ${j}\in J_B\cup J_{-}$, as well as $\seq{\tilde{\beta}_{j}}$ for every $j\in J_{+}$. Indeed, by contradiction, suppose that the sequence $\seq{m}$, given by \[
	m^k\doteq \max\{\max\{\tilde\alpha_{j}^k\colon {j}\in J_B\cup J_{-}\}, \ \max\{\tilde\beta_{j}^k\colon {j}\in J_{+}\}\},
\] diverges. Dividing~\eqref{socp:approxstat} by $m^k$, we obtain
	\[
		\sum_{{j}\in J_B\cup J_{-}} \frac{\tilde{\alpha}_{j}^k}{m^k} Dg_{j}(x^k)^\T u_1(g_{j}(x^k))+
		\sum_{{j}\in J_{+}} 
		\frac{\tilde{\beta}_{j}^k}{m^k} Dg_{j}(x^k)^\T u_2(g_{j}(x^k))
		\to 0
	\]
	and since the sequences $\{\tilde{\alpha}_{j}^k/m^k\}_{k\in \N}$ are bounded, we can assume without loss of generality, that they converge to, say, $\bar{\alpha}_{j}\geq 0$, for all ${j}\in J_B\cup J_{-}$; and, similarly, we can also assume that the sequences $\{\tilde{\beta}_{j}^k/m^k\}_{k\in \N}$ converge to some $\bar{\beta}_{{j}}\geq0$, for all ${j}\in J_{+}$. Note that at least one element of $\{\bar{\alpha}_{j}\}_{j\in J_B\cup J_-}\cup \{\bar{\beta}_{{j}}\}_{{j}\in  J_{+}}$ is non-zero, which makes the correspondent set $\mathcal{D}_{J_B,J_-,J_+}(\xb,\bar{w})$ as in Definition~\ref{socp:crcq} linearly dependent for any limit point $\bar{w}$ of any subsequence of $\seq{w}$, contradicting \socpwcpld{} since $\mathcal{D}_{J_B,J_-,J_+}(x^k,w^k)$, which coincides with~\eqref{eq:setDmidproof} with $w^k$ defined as in~\eqref{socp:setJ0}, is linearly independent for every $k\in \N$.
	
	Since $\seq{\tilde{\alpha}_{j}}$ and $\seq{\tilde{\beta}_{j}}$ are bounded, the sequence \si{$\{(\tilde{\mu}_1^k, \ldots, \tilde{\mu}_q^k)\}_{k\in \N}\subseteq \Lor_{m_1}\times\dots\times\Lor_{m_q}$} \jo{$\{(\tilde{\mu}_1^k, \ldots, \tilde{\mu}_q^k)\}_{k\in \N} \linebreak \subseteq \Lor_{m_1}\times\dots\times\Lor_{m_q}$} defined by
	\[
		\tilde{\mu}_{j}^k\doteq\left\{
		\begin{array}{ll}
			\tilde{\alpha}_{{j}}^k u_1(g_{j}(x^k)) + \tilde{\beta}_{{j}}^k u_2(g_{j}(x^k)), & \textnormal{ if } {j}\in J_{-}\cap J_{+},\\
			\tilde{\alpha}_{{j}}^k u_1(g_{j}(x^k)), & \textnormal{ if } {j}\in J_B\cup (J_{-}\setminus J_{+}),\\
			\tilde{\alpha}_{{j}}^k u_2(g_{j}(x^k)), & \textnormal{ if } {j}\in J_{+}\setminus J_{-},\\
			0, & \textnormal{ if } {j}\in I_{\int}(\xb) \textnormal{ or } {j}\not\in (J_B\cup J_{-}\cup J_{+})
		\end{array}
		\right.
	\]
	is also bounded. Finally, note that all limit points of $\{(\tilde{\mu}_1^k, \ldots, \tilde{\mu}_q^k)\}_{k\in \N}$ are Lagrange multipliers associated with $\xb$, which completes the first part of the proof. The second part follows directly from Theorem~\ref{thm:minakkt}.
\jo{\qed} \end{proof}

\begin{remark}\label{naive-remark}
In~\cite[Section 5]{crcq-naive}, we proposed so-called ``naive extensions'' of \nlpcrcq{} (and \nlpcpld{}) to NSOCP, which were obtained by replacing the conic constraints of~\eqref{NSOCP} that satisfy $g_{j}(\xb)\in \bd_+{\Lor_{m_{j}}}$ with standard NLP constraints, via a reduction function 
\[
	\Phi_{j}(x)\doteq g_{j,0}(x)^2 - \|\widehat{g}_{j}(x)\|^2,
\]
and then applying the NLP definition of \nlpcrcq{} (respectively, \nlpcpld{}) to those reduced constraints. However, in order to compare it with the conditions we presented, we use another reduction function, 
\[
	\tilde\Phi_{j}(x)\doteq g_{j,0}(x) - \|\widehat{g}_{j}(x)\|,
\]
instead of $\Phi_{j}(x)$, since $\nabla \tilde\Phi_{j}(x)=2Dg_{j}(x)^\T u_1(g_{j}(x))$ for all $x$ close enough to $\xb$ and ${j}\in I_B(\xb)$. As mentioned in~\cite[Remark 5.1-c]{crcq-naive}, using $\Phi_{j}$ or $\tilde\Phi_{j}$ characterize different approaches. Assuming the second type of naive approach, we recall that naive-CRCQ (respectively, naive-CPLD) is satisfied at $\xb\in \F$ when there exists a neighborhood $\mathcal{V}$ of $\xb$ such that, for every $J_B\subseteq I_B(\xb)$, the following holds: if the family~\eqref{socp:setndgclass} is  $\R^{|I_B(\xb)|}\times \prod_{{j}\in I_0(\xb)}\R^{m_{j}}$-linearly dependent (respectively, $\R^{|I_B(\xb)|}_+\times \prod_{{j}\in I_0(\xb)}\Lor_{m_{j}}$-linearly dependent), then \jo{the family }$\{Dg_{j}(x)^\T  u_1(g_{j}(x))\}_{j\in J_B}$ remains  linearly dependent for all $x$ in $\mathcal{V}$. Note that this definition coincides with nondegeneracy (respectively, Robinson's CQ) when no constraints are reducible -- that is, when $I_B(\xb)=\emptyset$ -- because $\emptyset$ is linearly independent. On the other hand, when all constraints are reducible, then Definition~\ref{socp:crcq} coincides with naive-CRCQ/CPLD. Thus, in the general case, both CQs of Definition~\ref{socp:crcq} are strictly weaker than their ``naive'' counterparts.
\end{remark}

\section{Stronger Constant Rank Conditions With Applications}\label{sec:appl}

As we already mentioned, our study of constraint qualifications is driven towards global convergence of algorithms for solving~\eqref{NSOCP}. In particular, we presented in the previous section a global convergence proof for the external penalty method under weak-CPLD; to extend this result for a broader class of iterative methods, we now introduce more robust adaptations of \sdpwcpld{} and \sdpwcrcq{}. This is similar to what we did in~\cite{seq-crcq-sdp} for NSDP problems. We start this section with an analogue of~\cite[Definition 4.2]{seq-crcq-sdp} in NSOCP, which characterizes a perturbed version of \socpwcrcq{} and \socpwcpld{}.

\begin{definition}[\socpscrcq{} and \socpscpld{}]\label{socp:scrcq}
	We say that $\xb\in \F$ satisfies the: 
	
	\begin{itemize}
		\item  \emph{Sequential CRCQ condition for NSOCP} (\socpscrcq{}) if for all sequences $\seq{x}\to \xb$ and \ima{\linebreak} $\seq{\Delta_{j}}\subseteq  \R^{m_{{j}}}$, ${j}\in I_0(\xb)\cup I_B(\xb)$, such that $\Delta^k_{j}\to 0$ for every ${j}$, there exists some $I\subseteq_{\infty} \N$, and convergent eigenvector sequences $\{u_1(g_{j}(x^k)+\Delta^k_{j})\}_{k\in I}\to \frac{1}{2}(1,-\bar{w}_{j})$ and $\{u_2(g_{j}(x^k)+\Delta^k_{j})\}_{k\in I}\to \frac{1}{2}(1,\bar{w}_{j})$, with $\|\bar{w}_{j}\| = 1$, for all ${j}\in I_0(\xb)$, such that for all subsets $J_B\subseteq I_B(\xb)$ and $J_{-},J_{+}\subseteq I_0(\xb)$, we have that: if the family of vectors $\mathcal{D}_{J_B,J_-,J_+}(\xb,\bar{w})$ is linearly dependent, \si{then $\mathcal{D}_{J_B,J_-,J_+}(x^k,w^k)$} \jo{then \linebreak $\mathcal{D}_{J_B,J_-,J_+}(x^k,w^k)$} remains linearly dependent for \si{every} \jo{all} $k\in I$ large enough, where $\bar{w}=[\bar{w}_j]_{j\in J_-\cup J_+}$ and $w^k=[w^k_j]_{j\in J_-\cup J_+}$ with
\begin{equation}\label{socp:setJ}
u_1(g_j(x^k)+\Delta_j^k)=\frac{1}{2}(1,-w^k_j) 
\quad\textnormal{and}\quad u_2(g_j(x^k)+\Delta_j^k)=\frac{1}{2}(1,w^k_j)
\end{equation}
for each $j\in J_-\cup J_+$. Recall that $\mathcal{D}_{J_B,J_-,J_+}(x,w)$ was defined in~\eqref{socp:familyd}.
	\item \emph{Sequential CPLD condition for NSOCP} (\socpscpld{}) if for all sequences $\seq{x}\to \xb$ and \ima{\linebreak} $\seq{\Delta_{j}}\subseteq \R^{m_{j}}$, ${j}\in I_0(\xb)\cup I_B(\xb)$, such that $\Delta^k_{j}\to 0$ for every ${j}$, there exists some $I\subseteq_{\infty} \N$, and convergent eigenvector sequences $\{u_1(g_{j}(x^k)+\Delta^k_{j})\}_{k\in I}\to \frac{1}{2}(1,-\bar{w}_{j})$ and $\{u_2(g_{j}(x^k)+\Delta^k_{j})\}_{k\in I}\to \frac{1}{2}(1,\bar{w}_{j})$, with $\|\bar{w}_{j}\| = 1$, for all ${j}\in I_0(\xb)$, such that for all subsets $J_B\subseteq I_B(\xb)$ and $J_{-},J_{+}\subseteq I_0(\xb)$, we have that: if $\mathcal{D}_{J_B,J_-,J_+}(\xb,\bar{w})$ is positively linearly dependent, then $\mathcal{D}_{J_B,J_-,J_+}(x^k,w^k)$ remains linearly dependent for all $k\in I$ large enough, where $\bar{w}$ and $w^k$ are as the previous item.
	\end{itemize}			
	
\end{definition}

Note that the nondegeneracy condition (as in Proposition~\ref{socp:conicli}) implies \socpscrcq{}, whereas Robinson's CQ implies \socpscpld{}. Moreover, these implications are strict, as it is shown in the next counterexample:

\begin{example}(Nondegeneracy and Robinson's CQ are strictly stronger than seq-CRCQ and seq-CPLD, respectively)\label{socp:excrcqndg}
Consider the constraint
\[
	g(x)\doteq (-x,x)\in \Lor_2
\]
at the point $\xb\doteq 0$, which is the only feasible point of the problem. In order to verify that $\xb$ satisfies \socpscpld{} and \socpscrcq{}, let $\seq{x}\to \xb$ and $\seq{\Delta}\to 0$ be arbitrary sequences. We will assume that there is some $I\subseteq_{\infty}\N$ such that $\widehat{g}(x^k)+\widehat{\Delta}^k>0$ for all $k\in I$, where $\Delta^k\doteq (\Delta^k_0, \widehat{\Delta}^k)\in \R^2$, since the other cases (as in Example~\ref{socp:exwndgnotndg2}) follow analogously. Then, we have
\[
	u_1(g(x^k)+\Delta^k)=\frac{1}{2}(1,-1)\quad \mand \quad u_2(g(x^k)+\Delta^k)=\frac{1}{2}(1,1),
\] 
which implies that $w^k=\bar{w}=1$ for all $k\in I$. Hence, the vectors \si{$Dg(\xb)^\T (1,-\bar{w})=-2$} \jo{$Dg(\xb)^\T \linebreak (1,-\bar{w})=-2$} and $Dg(x^k)^\T (1,w^k)=0$ are (positively) linearly dependent, but since $Dg(x^k)^\T (1,-w^k)=-2$ and $Dg(x^k)^\T (1,w^k)=0$ are also linearly dependent for every $k\in I$, we see that \socpscpld{} and \socpscrcq{} both hold, while Robinson's CQ and nondegeneracy do not.
\end{example}
Example~\ref{socp:excrcqndg} shows that \socpscrcq{} does not imply Robinson's CQ, and the converse is also false; otherwise Robinson's CQ would imply \socpwcrcq{}, contradicting Example~\ref{socp:exrobcrcq}. Further, note that Definition~\ref{socp:scrcq} is basically Definition~\ref{socp:crcq} with the addition of some perturbation sequences $\seq{\Delta_{j}}$. Then, \socpscpld{} implies \socpwcpld{}, and \socpscrcq{} implies \socpwcrcq{}, implying \textit{a fortiori} that \socpscpld{} and  \socpscrcq{} are constraint qualifications. However, the next example shows that these implications are both strict.

\begin{example}[Seq-CRCQ and seq-CPLD are stronger than weak-CRCQ and weak-CPLD, respectively]
Consider the constraint
\[
	g(x)\doteq (x^2,x,0)\in \Lor_3
\]
at $\bar{x}\doteq 0$. Let us begin by showing that $\xb$ satisfies both \socpwcrcq{} and \socpwcpld{}, so let $\seq{x}\to \xb$ be an arbitrary sequence. Again, as in Example~\ref{socp:exwndgnotndg2}, we will assume without loss of generality that there exists some $I\subseteq_{\infty} \N$ such that $x^k>0$ for every $k\in I$. In this case, we must have
\[
	u_1(g(x^k))=\frac{1}{2}\left(1, -1, 0\right) \quad \mand \quad u_2(g(x))=\frac{1}{2}\left(1, 1, 0\right),
\]
which yields $w^k=\bar{w}=(1,0)$ for every $k\in I$. Then, $Dg(\xb)^\T (1,-\bar{w})=-1$ and $Dg(\xb)^\T (1,\bar{w})=1$ are (positively) linearly dependent, but since \si{$Dg(x^k)^\T (1,-w^k)=2x^k-1$} \jo{$Dg(x^k)^\T \linebreak (1,-w^k)=2x^k-1$} and $Dg(x)^\T u_2(g(x))=2x^k+1$ are also linearly dependent for all $k\in I$ large enough so that $x^k\in (-\frac{1}{2},\frac{1}{2})$, it means that \socpwcrcq{} and \socpwcpld{} both hold at $\xb$.
	
However, taking any sequence $\seq{x}\to \xb$ such that $x^k>0$ for every $k\in \N$, and the perturbation vector
\[
	\Delta^k\doteq (-(x^k)^2,-x^k,x^k)\to 0,
\]
we have that $g(x^k)+\Delta^k\doteq (0,0,x^k)$, so its eigenvectors are uniquely determined by
\[
	u_1(g(x^k)+\Delta^k)=\frac{1}{2}\left(1, 0, -1\right) \quad \mand \quad u_2(g(x^k)+\Delta^k)=\frac{1}{2}\left(1, 0, 1\right),
\]
implying $Dg(x^k)^\T u_1(g(x^k)+\Delta^k)=2x^k>0$ and $Dg(x^k)^\T u_2(g(x^k)+\Delta^k)=2x^k>0$ are positively linearly independent for every $k\in \N$. But since $Dg(\xb)^\T (1,0,-1)=Dg(\xb)^\T (1,0,1)=0$ we conclude that \socpscpld{} and, by extension, \socpscrcq{}, both fail at $\xb$.
\end{example}

Furthermore, conditions~\socpscrcq{} and~\socpscpld{} can also be characterized in terms of a neighborhood, without sequences, just as the original CRCQ and CPLD conditions from NLP. Let us prove this:

\begin{proposition}\label{socp:nonseqscrcq}
Let $\xb\in\F$. Condition \socpscrcq{} (respectively, \socpscpld{}) holds at $\xb$ if, and only if, for every $\bar{w}\doteq [\bar{w}_{j}]_{{j}\in I_0(\xb)}$ with $\|\bar{w}_{j}\|=1, \ {j}\in I_0(\xb)$, there exists a neighborhood $\mathcal{V}$ of $(\xb,\bar{w})$ such that: for every $J_B\subseteq I_B(\xb)$ and $J_-,J_+\subseteq I_0(\xb)$, if $\mathcal{D}_{J_B,J_-, J_+}(\xb,\bar{w})$ is (positively) linearly dependent, then $\mathcal{D}_{J_B,J_-, J_+}(x,w)$ remains linearly dependent for every $(x,w)\in \mathcal{V}$ with $w\doteq [w_{j}]_{{j}\in I_0(\xb)}$ and $\|w_j\|=1$ for every $j\in J_-\cup J_+$. Here, $\mathcal{D}_{J_B,J_-, J_+}(x,w)$ is as defined in~\eqref{socp:familyd}.
\end{proposition}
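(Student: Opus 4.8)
The plan is to prove the two implications separately, treating the \socpscrcq{} and \socpscpld{} cases simultaneously by carrying the qualifier ``(positively)'' throughout; the only difference between them lies in whether $\mathcal{D}_{J_B,J_-,J_+}(\xb,\bar{w})$ is assumed linearly dependent or positively linearly dependent, while the conclusion to be preserved (linear dependence of the perturbed family) is identical. Throughout, I would rely on the fact that for $j\in I_B(\xb)$ the map $Dg_j(\cdot)^\T u_1(g_j(\cdot))$ is continuous near $\xb$, since $\widehat{g}_j(\xb)\neq 0$, so the $J_B$-block of~\eqref{socp:familyd} is well defined on a neighborhood of $\xb$ and does not depend on the parameter $w$.

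First I would show that the neighborhood condition implies the sequential one. Given arbitrary sequences $\seq{x}\to\xb$ and $\seq{\Delta_j}\to 0$, for each $k$ and each $j\in I_0(\xb)$ I pick any admissible eigendecomposition of $g_j(x^k)+\Delta_j^k$ satisfying~\eqref{socp:setJ}: when $\widehat{g}_j(x^k)+\widehat{\Delta}_j^k\neq 0$ the unit vector $w_j^k$ is forced, and otherwise I am free to choose any unit $w_j^k$. In every case $w_j^k$ lies on the unit sphere, so by compactness I can extract $I\subseteq_{\infty}\N$ along which $w_j^k\to\bar{w}_j$ with $\|\bar{w}_j\|=1$ for all $j\in I_0(\xb)$; this produces exactly the convergent eigenvector sequences required by Definition~\ref{socp:scrcq}. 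Applying the neighborhood hypothesis to this particular $\bar{w}=[\bar{w}_j]_{j\in I_0(\xb)}$ yields a neighborhood $\mathcal{V}$ of $(\xb,\bar{w})$, and since $(x^k,w^k)\to(\xb,\bar{w})$ we have $(x^k,w^k)\in\mathcal{V}$ for all large $k\in I$. Hence the preservation of (positive) linear dependence guaranteed on $\mathcal{V}$ transfers verbatim to the tail of the sequence, which is precisely what Definition~\ref{socp:scrcq} demands.

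The converse — the sequential condition implies the neighborhood one — is the main obstacle, and I would argue by contraposition. If the neighborhood condition fails, there is a fixed $\bar{w}=[\bar{w}_j]_{j\in I_0(\xb)}$ with unit components such that, choosing shrinking neighborhoods $\mathcal{V}_k\doteq B(\xb,1/k)\times B(\bar{w},1/k)$, one obtains points $x^k\to\xb$ and $w^k\to\bar{w}$ (with $\|w_j^k\|=1$ on $J_-\cup J_+$) together with triples $(J_B^k,J_-^k,J_+^k)$ for which $\mathcal{D}_{J_B^k,J_-^k,J_+^k}(\xb,\bar{w})$ is (positively) linearly dependent while $\mathcal{D}_{J_B^k,J_-^k,J_+^k}(x^k,w^k)$ is linearly independent. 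Since there are finitely many triples, the infinite pigeonhole principle lets me pass to a subsequence along which $(J_B,J_-,J_+)$ is fixed. The crucial step is then to manufacture perturbations that realize the prescribed directions $w_j^k$ as genuine eigenvectors: for $j\in I_0(\xb)$ I set $\Delta_j^k\doteq(0,\varepsilon_k w_j^k-\widehat{g}_j(x^k))$ for any $\varepsilon_k\downarrow 0$, so that $g_j(x^k)+\Delta_j^k=(g_{j,0}(x^k),\varepsilon_k w_j^k)$ has nonzero vector part pointing exactly along $w_j^k$; its eigenvectors are therefore the unique pair $\frac{1}{2}(1,-w_j^k)$ and $\frac{1}{2}(1,w_j^k)$ matching~\eqref{socp:setJ}, and $\Delta_j^k\to 0$ because $\widehat{g}_j(x^k)\to 0$ on $I_0(\xb)$. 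For $j\in I_B(\xb)$ I simply take $\Delta_j^k\doteq 0$, which leaves the $J_B$-block of~\eqref{socp:familyd} untouched.

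With these data the eigenvectors are uniquely determined, so on every further subsequence the limit of each eigenvector sequence is forced to be $\bar{w}$ and Definition~\ref{socp:scrcq} offers no freedom of choice; yet for the fixed triple $(J_B,J_-,J_+)$ we have that $\mathcal{D}_{J_B,J_-,J_+}(\xb,\bar{w})$ is (positively) linearly dependent while $\mathcal{D}_{J_B,J_-,J_+}(x^k,w^k)$ is linearly independent for every $k$. No infinite subset $I$ can repair this, so \socpscrcq{} (respectively \socpscpld{}) is violated, establishing the contrapositive. I expect the only delicate points to be checking that the constructed perturbations vanish in the limit and that they induce the unique eigendecomposition prescribed by~\eqref{socp:setJ}; both follow immediately from $\widehat{g}_j(x^k)\to 0$ on $I_0(\xb)$ and from $\varepsilon_k w_j^k\neq 0$.
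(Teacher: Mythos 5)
Your proof is correct and follows essentially the same strategy as the paper's: the easy direction by extracting a convergent subsequence of the unit vectors $w^k_j$ and invoking the neighborhood, and the hard direction by contraposition, manufacturing perturbations $\Delta^k_j\to 0$ that force a unique eigendecomposition of $g_j(x^k)+\Delta^k_j$ in the prescribed direction $w^k_j$. The only cosmetic difference is your choice $\Delta^k_j=(0,\varepsilon_k w^k_j-\widehat{g}_j(x^k))$ (and $\Delta^k_j=0$ on $I_B(\xb)$) versus the paper's $\Delta^k_j=\frac{1}{k}(1,w^k_j)-g_j(x^k)$, which places the perturbed point on $\bd_+\Lor_{m_j}$; both achieve the required uniqueness of the eigenvectors.
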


\begin{proof}
Suppose that there exists some subsets $J_B\subseteq I_B(\xb)$ and $J_-,J_+\subseteq I_0(\xb)$, and some $\bar{w}=[\bar{w}_{j}]_{{j}\in J_-\cup J_+}$ such that $\mathcal{D}_{J_B,J_-,J_+}(\xb,\bar{w})$ is (positively) linearly dependent, but there is a sequence $\{(x^k,w^k)\}_{k\in \N}\to (\xb,\bar{w})$ with $w^k\doteq [w_{j}^k]_{{j}\in J_-\cup J_+}$ and $\|w^k_j\|=1$, such that $\mathcal{D}_{J_B,J_-,J_+}(x^k,w^k)$ is linearly independent for all $k\in\N$. Define, for each $k\in \N$ and ${j}\in J_B\cup I_-\cup I_+$, the perturbation vector
\begin{equation}\label{eq:deltabom}
	\Delta^k_{j}\doteq 
	\left\{
		\begin{aligned}
			\frac{1}{k}\left(1, w^k_{j}\right)-g_{j}(x^k), & \quad \textnormal{ if } {j}\in J_-\cup J_+\\
			g_{j,0}(\bar{x}) \left(1, \frac{\widehat{g}_j(x^k)}{\|\widehat{g}_j(x^k)\|}\right)-g_{j}(x^k), & \quad \textnormal{ if } {j}\in J_B,
		\end{aligned}
	\right.
\end{equation}
which implies that $g_{j}(x^k)+\Delta^k_{j}\in \bd_+ \Lor_{m_{j}}$ and hence its eigenvectors are uniquely determined for every such $j$ and $k$. This contradicts Definition~\ref{socp:scrcq}.

Conversely, pick any sequences $\seq{x}\to \xb$ and $\seq{\Delta_{j}}\to 0$, ${j}\in I_0(\xb)\cup I_B(\xb)$, and any subsets $J_B\subseteq I_B(\xb)$ and $J_-,J_+\subseteq I_0(\xb)$. Then, define $\seq{w}$ as in Definition~\ref{socp:scrcq} and let $\bar{w}=[\bar{w}_j]_{j\in J_-\cup J_+}$ be such that $\|\bar{w}_j\|=1$ for every $j\in J_-\cup J_+$ and $\lim_{k\in I} u_1(g_{j}(x^k)+\Delta^k_{j})= \frac{1}{2}(1,-\bar{w}_{j})$ and $\lim_{k\in I} u_2(g_{j}(x^k)+\Delta^k_{j})= \frac{1}{2}(1,\bar{w}_{j})$, for some $I\subseteq_{\infty}\N$. Note that $\lim_{k\in I} w^k= \bar{w}$, so if $\mathcal{D}_{J_B,J_-,J_+}(\xb,\bar{w})$ is (positively) linearly dependent, then $\mathcal{D}_{J_B,J_-,J_+}(x^k,w^k)$ is remains linearly dependent for every $k$ large enough. 
\jo{\qed} \end{proof}

\begin{remark}\label{rem:slices}
Note that Proposition~\ref{socp:nonseqscrcq} reveals that Definition~\ref{socp:scrcq} characterizes a ``constant rank condition, or constant (positive) linear dependence, by conical slices''. For example, consider a single constraint $g(x)\in \Lor_m$ at a point $\xb$ such that $g(\xb)\in \Lor_m$; then, \socpscrcq{} holds at $\xb$ if, and only if, for each conical slice of $\Lor_m$, which can be of two types:
\begin{enumerate}
\item $C_{\bar{w}}^1=\textnormal{cone}(\{(1,\bar{w})\})$, for some $\bar{w}\in \R^{m-1}$ such that $\|\bar{w}\|=1$;

\item $C_{\bar{w}}^2=\textnormal{cone}(\{(1,-\bar{w}),(1,\bar{w})\})$, for some $\bar{w}\in \R^{m-1}$ such that $\|\bar{w}\|=1$;
\end{enumerate}
the dimension of
\[
	Dg(x)^\T \textnormal{span}(C_w^i)=\left\{
	\begin{array}{ll}
		\textnormal{span}(\{Dg(x)^\T(1,w)\}), & \textnormal{if } i=1,\\
		\textnormal{span}(\{Dg(x)^\T (1,-w),Dg(x)^\T(1,w)\}), & \textnormal{if } i=2,
	\end{array}			
	\right.
\]
remains constant for every $(x,w)$ close enough to $(\xb,\bar{w})$. The \socpscpld{} condition admits a similar phrasing. That is, the local constant rank property must hold for every perturbation of $\xb$ and every perturbation of the slice as well, roughly speaking, and the existence of two types of conical slices describes, intuitively, why should one consider every subset of \si{$\{Dg(x)^\T (1,-w),Dg(x)^\T(1,w)\}$} \jo{$\{Dg(x)^\T (1,-w), \linebreak Dg(x)^\T(1,w)\}$}.  
\end{remark}

\subsection{Global Convergence of Algorithms With Some Examples}\label{sec:algorithms}

Here, we show that the condition \socpscpld{} can be used to prove global convergence of an abstract class of iterative algorithms, namely the ones that generate sequences of approximate solutions $\seq{x}$, which we will assume to be convergent to some $\xb$, and approximate Lagrange multipliers $\seq{\mu_{j}}\subseteq \Lor_{m_{j}}$, ${j}\in\{1,\ldots,q\}$, in the sense that
\begin{equation}\label{eq:akkt1}
	\nabla_x L(x^k,\mu^k_1,\ldots,\mu^k_q)\to 0
\end{equation}
and for every $k\in \N$,
\begin{equation}\label{eq:akkt2}
	g_{j}(x^k)+\Delta^k_{j}\in \Lor_{m_{j}} \quad \textnormal{and}\quad \langle g_{j}(x^k)+\Delta^k_{j}, \mu_{j}^k \rangle=0
\end{equation}
for some sequences $\Delta^k_{j}\to 0$, ${j}\in \{1,\ldots,q\}$. Later in this section, we will discuss some details about some popular algorithms that generate this kind of sequence. But first, let us prove our unified global convergence result:

\begin{theorem}[Global convergence under seq-CPLD]\label{socp:safeguard}
Let $\seq{x}$ and $\seq{\mu_{j}}\subseteq \Lor_{m_{j}}$, ${j}\in\{1,\ldots,q\}$ satisfy~\eqref{eq:akkt1} and~\eqref{eq:akkt2}, and let $\xb$ be a feasible limit point of $\seq{x}$ that satisfies \socpscpld{}. Then, $\xb$ satisfies the KKT conditions.
\end{theorem}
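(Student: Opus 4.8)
The plan is to mirror the argument of Theorem~\ref{socp:mincrcqkkt}, replacing the penalty multipliers $\rho_k\mathcal{P}_{\Lor_{m_j}}(-g_j(x^k))$ by the abstract multipliers $\mu_j^k$ and working with the perturbed constraint values $g_j(x^k)+\Delta_j^k$ in place of $g_j(x^k)$, so that the perturbation sequences built into Definition~\ref{socp:scrcq} are exactly the ones furnished by~\eqref{eq:akkt2}. First I would pass to a subsequence along which $x^k\to\xb$ and exploit the complementarity in~\eqref{eq:akkt2}: since $\mu_j^k\in\Lor_{m_j}$, $g_j(x^k)+\Delta_j^k\in\Lor_{m_j}$, and $\inner{g_j(x^k)+\Delta_j^k}{\mu_j^k}=0$, the standard complementarity characterization for the second-order cone shows the two vectors share a common spectral frame. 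Hence I may write $\mu_j^k=\alpha_j^k u_1(g_j(x^k)+\Delta_j^k)+\beta_j^k u_2(g_j(x^k)+\Delta_j^k)$ with $\alpha_j^k,\beta_j^k\geq 0$ and eigenvalue-wise complementarity against $g_j(x^k)+\Delta_j^k$.

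Next I would reduce to the active indices. For $j\in I_{\int}(\xb)$ both eigenvalues of $g_j(x^k)+\Delta_j^k$ are bounded away from $0$ for large $k$, forcing $\mu_j^k=0$; for $j\in I_B(\xb)$ the larger eigenvalue $\lambda_2(g_j(x^k)+\Delta_j^k)$ stays positive, forcing $\beta_j^k=0$, and there the eigenvectors are uniquely determined and converge to $u_1(g_j(\xb))$. Substituting into~\eqref{eq:akkt1} then yields $\grad f(x^k)-\Psi^k\to 0$, where $\Psi^k$ is the analogue of the quantity in the proof of Theorem~\ref{socp:mincrcqkkt} written with the perturbed eigenvectors. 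Applying Carath\'eodory's Lemma~\ref{lem:carath} to $\Psi^k$ produces index sets $J_B^k\subseteq I_B(\xb)$ and $J_-^k,J_+^k\subseteq I_0(\xb)$ for which the associated family of derivative vectors is linearly independent and represents $\Psi^k$ with strictly positive weights; by the pigeonhole principle I would pass to a further subsequence on which these index sets are constant, equal to $J_B,J_-,J_+$, and on which the eigenvector sequences converge, as guaranteed by Definition~\ref{socp:scrcq}, to $\tfrac12(1,-\bar{w}_j)$ and $\tfrac12(1,\bar{w}_j)$.

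The crux is the boundedness of the Carath\'eodory coefficients $\tilde\alpha_j^k,\tilde\beta_j^k$. Arguing by contradiction, I would divide $\grad f(x^k)-\Psi^k\to 0$ by the largest coefficient $m^k\to\infty$; the $\grad f$ term vanishes in the limit, the normalized coefficients converge along a subsequence to nonnegative numbers not all zero, and the directions $(1,-w_j^k),(1,w_j^k)$ converge to $(1,-\bar{w}_j),(1,\bar{w}_j)$ via~\eqref{socp:setJ}. Passing to the limit exhibits a nontrivial nonnegative combination of the vectors of $\mathcal{D}_{J_B,J_-,J_+}(\xb,\bar{w})$ equal to zero, so this family is positively linearly dependent. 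Then \socpscpld{} forces $\mathcal{D}_{J_B,J_-,J_+}(x^k,w^k)$ to be linearly dependent for all large $k\in I$, contradicting the linear independence produced by Carath\'eodory's Lemma. I expect this step to be the main obstacle, and in particular the delicate point of reconciling, for $j\in I_B(\xb)$, the perturbed eigenvector $u_1(g_j(x^k)+\Delta_j^k)$ appearing in the decomposition with the unperturbed $u_1(g_j(x^k))$ appearing in $\mathcal{D}_{J_B,J_-,J_+}$ of~\eqref{socp:familyd}; these agree only in the limit, and the gap must be controlled through the continuity and uniqueness of the spectral decomposition at the nonzero boundary point $g_j(\xb)\in\bd_+\Lor_{m_j}$.

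Finally, with the coefficients bounded I would assemble the bounded multiplier sequence $\tilde\mu_j^k$ exactly as in the proof of Theorem~\ref{socp:mincrcqkkt}, combining the surviving $u_1$ and $u_2$ components with their Carath\'eodory weights and setting the remaining components to zero. Since $\seq{\tilde\mu_j}\subseteq\Lor_{m_j}$ is bounded and reproduces $\Psi^k$, any of its limit points $\bar\mu_j$ lies in $\Lor_{m_j}$, satisfies $\grad f(\xb)-\sum_{j}Dg_j(\xb)^\T\bar\mu_j=0$ by continuity, and is complementary to $g_j(\xb)$ (trivially for $j\in I_0(\xb)$ since $g_j(\xb)=0$, and because $\bar\mu_j$ is a nonnegative multiple of $u_1(g_j(\xb))$, which is orthogonal to $g_j(\xb)$, for $j\in I_B(\xb)$). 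Hence $\xb$ is a KKT point.
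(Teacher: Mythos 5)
Your proposal is correct and follows essentially the same route as the paper: decompose each $\mu_{j}^k$ in a spectral frame shared with $g_{j}(x^k)+\Delta_{j}^k$ via the complementarity in~\eqref{eq:akkt2}, discard the components annihilated by the eventually positive eigenvalues at indices in $I_{\int}(\xb)$ and $I_B(\xb)$, and then run the Carath\'eodory / unbounded-coefficients / \socpscpld{} contradiction exactly as in Theorem~\ref{socp:mincrcqkkt}. The ``delicate point'' you flag for ${j}\in I_B(\xb)$ --- that the decomposition carries the eigenvector of the perturbed value $g_{j}(x^k)+\Delta_{j}^k$ while $\mathcal{D}_{J_B,J_-,J_+}$ in~\eqref{socp:familyd} is written with $u_1(g_{j}(x^k))$, the two agreeing only in the limit $u_1(g_{j}(\xb))$ --- is genuine, and it is precisely the detail the paper leaves implicit when it declares the remainder of the argument ``similar'' to Theorem~\ref{socp:mincrcqkkt} and omits it.
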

\begin{proof}
For simplicity, let us assume that $\seq{x}\to \xb$. From~\eqref{eq:akkt1} we obtain that
\begin{equation}\label{safe:kkt}
\nabla f(x^k)-\sum_{{j}=1}^q Dg_{j}(x^k)^\T \mu^k_{j}\to 0.
\end{equation}

Now, by~\eqref{eq:akkt2} we obtain
\[
	\mu_j^k=\left\{
		\begin{array}{ll}
			0, & \textnormal{if } g_j(x^k)+\Delta_j^k\in \int \Lor_{m_j},\\
			\frac{\mu_{j,0}^k}{g_{j,0}(x^k)+\Delta_{j,0}^k} \Gamma_j (g_j(x^k)+\Delta_j^k), & \textnormal{if } g_j(x^k)+\Delta_j^k\in \bd^+ \Lor_{m_j},
		\end{array}
	\right.
\]
where $\Gamma_j$ is defined in~\eqref{eq:gamma}, and $\mu_j^k$ can be any point of $\Lor_{m_j}$ if $g_j(x^k)+\Delta_j^k=0$. Thus, there exists a spectral decomposition of
\[
	\mu_{{j}}^{k} \doteq \alpha_{{j}}^{k}u_{1}(\mu_j^k) + \beta_{{j}}^{k}u_{2}(\mu_j^k),
\]
such that $u_{1}(\mu_j^k)$ and $u_{2}(\mu_j^k)$ are also eigenvectors of $g_{j}(x^k)+\Delta_{j}^k$ for every $k\in\N$. Moreover, note that~\eqref{eq:akkt2} implies that $\alpha_j^k\lambda_{1}(g_j(x_j^k)+\Delta_j^k)=0$ and $\beta_j^k\lambda_{2}(g_j(x_j^k)+\Delta_j^k)=0$ for every $k\in \N$ and every $j\in \{1,\ldots,q\}$. Then $\beta_{{j}}^k=0$ for all $k$ large enough and for every ${j}\in I_B(\xb)\cup I_{\int}(\xb)$, because $\lambda_{2}(g_j(x_j^k)+\Delta_j^k)>0$ for all large $k$ in these cases. Therefore, we can rewrite~\eqref{safe:kkt} as 
\si{
\begin{displaymath}
\nabla f(x^k)-\sum_{{j}\in I_{0}(\xb)} \left(\alpha_{{j}}^{k} Dg_{{j}}(x^{k})^{\T} u_{1}(\mu_j^k) + \beta_{{j}}^{k} Dg_{{j}}(x^{k})^{\T} u_{2}(\mu_j^k)\right) - \sum_{{j}\in I_{B}(\xb)} \alpha_{{j}}^{k} Dg_{{j}}(x^{k})^{\T}u_{1}(\mu_j^k)\to 0.
\end{displaymath}
}
\jo{
\begin{eqnarray*}
\nabla f(x^k) &-& \sum_{{j}\in I_{0}(\xb)} \left(\alpha_{{j}}^{k} Dg_{{j}}(x^{k})^{\T} u_{1}(\mu_j^k) + \beta_{{j}}^{k} Dg_{{j}}(x^{k})^{\T} u_{2}(\mu_j^k)\right)\\
 &-& \sum_{{j}\in I_{B}(\xb)} \alpha_{{j}}^{k} Dg_{{j}}(x^{k})^{\T}u_{1}(\mu_j^k)\to 0.
\end{eqnarray*}
}
The rest of the proof is similar to the proof of Theorem~\ref{socp:mincrcqkkt}, which consists of using Carathéodory's Lemma in the above relation, assuming that the new scalars are unbounded, and then directly applying Definition~\ref{socp:scrcq} to reach a contradiction, hence it shall be omitted.
\jo{\qed} \end{proof}

The sequences satisfying~\eqref{eq:akkt1} and~\eqref{eq:akkt2} are known as \textit{Approximate-KKT} (AKKT) sequences, which define a sequential optimality condition introduced by \si{Andreani et al. in}\jo{Andreani et al. in}~\cite{Santos2019} for NSOCP problems.  Also, we must mention that several algorithms generate AKKT sequences; one recurrent example (see~\cite[Algorithm 5.1]{Santos2019}) is the classical Hestenes-Powell-Rockafellar \ima{(see} \cite{hestenes,powell,Rocka1974}\ima) augmented Lagrangian method, which is based on the perturbed penalty function
\[
	L_{\rho,\tilde{\mu}_1,\ldots,\tilde{\mu}_q}(x)\doteq f(x)+\frac{\rho}{2}\left[\sum_{j=1}^q \left\|\mathcal{P}_{\Lor_{m_j}}\left(-g_j(x)-\frac{\tilde{\mu}_j}{\rho}\right)\right\|^2 - \left\|\frac{\tilde{\mu}_j}{\rho}\right\|^2\right],
\]
where $\rho\in \R_+$ and $\tilde{\mu}_j\in \Lor_{m_j}$, $j\in\{1,\ldots,q\}$, are given parameters. The sequence $\seq{x}$ is computed as approximate stationary points of $L_{\rho_k,\tilde{\mu}_1^k,\ldots,\tilde{\mu}_q^k}(x)$ and their associate approximate Lagrange multipliers are given by
\[
	\mu_{j}^k\doteq \mathcal{P}_{\Lor_{m_{j}}}\left(-\rho_k g_{j}(x^k)-\tilde{\mu}_{{j}}^k\right)
\]
where $\{\rho_k\}_{k\in\N}$ is the penalty parameter and $\seq{\tilde{\mu}_{j}}\subseteq \Lor_{m_{j}}$ are given sequences and $\Delta^k_j\doteq \frac{\mu^k_j-\tilde{\mu}^k_j}{\rho_k}$ for every $j\in \{1,\ldots,q\}$. In particular, note that $\nabla L_{\rho_k,\tilde{\mu}_1^k,\ldots,\tilde{\mu}_q^k}(x^k)=\nabla_x L(x^k,\mu_1^k,\ldots,\mu_q^k)$ for every $k\in \N$. See also~\cite{Andreani2020} for a more detailed discussion on this topic.

Besides the augmented Lagrangian and its variants, the \textit{sequential quadratic programming} (SQP) algorithm of \si{Kato and Fukushima}\jo{Kato and Fukushima}~\cite[Algorithm 1]{katofukusqp} can also be proved to generate output sequences that satisfy~\eqref{eq:akkt1} and~\eqref{eq:akkt2}. For completeness, we state their algorithm below:

\begin{center}
\centering
\begin{minipage}{\linewidth}
\begin{algorithm}[H]
	\caption{Sequential quadratic programming algorithm of~\cite{katofukusqp}.}
	\label{sqp}
	\medskip

    {\it \textbf{Input}:} An initial point $x^{0}\in \R^n$ and some parameters $\alpha_0>0$, $\sigma\in(0,1)$, $\gamma_1>0$, $\gamma_2>0$, and $\tau>0$.

	\medskip
	
	Set $k \doteq 0$. Then:
	
	\medskip
	{\it \textbf{Step 1}:} Choose a symmetric positive definite matrix $M^k\in \R^{n\times n}$ such that $\gamma_1\|z\|^2\leq z^\T M^k z\leq \gamma_2\|z\|^2$ for every $z\in \R^n$, and find a solution $d^k$ if possible of the problem:
		\begin{equation}
  \tag{QP}
  \begin{aligned}
    & \underset{d \in \mathbb{R}^{n}}{\text{Minimize}}
    & & \nabla f(x^k)^\T d + \frac{1}{2}d^\T M^k d, \\
    & \text{subject to}
    & & g_j(x^k)+Dg_j(x^k)d \in \Lor_{m_j}, \ \forall j\in \{1,\ldots,q\}
  \end{aligned}
  \label{qp}
\end{equation}
 together with its Lagrange multipliers $\mu^{k}_j\in \Lor_{m_j}$, $j\in \{1,\ldots,q\}$; if $d^k=0$, then \textbf{stop};

	\medskip
	{\it \textbf{Step 2}:}
	Set the penalty parameter as follows: If $\alpha^k\geq \max\{|\mu^k_{j,0}|\colon j\in \{1,\ldots,q\}\}$, then $\alpha^{k+1}\doteq \alpha^k$; otherwise, $\alpha^{k+1}\doteq \max\{\alpha^k, |\mu^k_{j,0}|\colon j\in \{1,\ldots,q\}\}+\tau$;
	
	\medskip
	{\it \textbf{Step 3}:} Compute some scalar $t^k\in (0,1]$ satisfying
	\begin{equation}\label{eq:sqpstepsize}
		\Phi_{\alpha^{k+1}}(x^k)-\Phi_{\alpha^{k+1}}(x^k+t^k d^k)\leq \sigma t^k (d^k)^\T M^k d^k;
	\end{equation}
	where
	\[
		\Phi_{\alpha}(x)\doteq f(x) + \alpha\sum_{j=1}^q \max\{0,-g_{j,0}(x)-\|\widehat{g}_j(x)\|\}
	\]
	is a penalty function;

	\medskip
	{\it \textbf{Step 4}:} Set $x^{k+1}\doteq x^k + t^k d^k$ and $k \doteq k+1$, and go to Step 1.
	
	\medskip
\end{algorithm}
\end{minipage}
\end{center}

\

In~\cite{katofukusqp}, \si{Kato and Fukushima}\jo{Kato and Fukushima}\ima{the authors} proved the global convergence of Algorithm~\ref{sqp} under the following assumptions:
\begin{itemize}
\item [A1.] Step 1 is well-defined for every $k\in \N$;
\item [A2.] The output sequence $\{x^k\}_{k\in \N}$ of Algorithm~\ref{sqp} is bounded;
\item [A3.] The multiplier sequences $\{\mu_j^k\}_{k\in \N}$, $j\in \{1,\ldots,q\}$ computed by the method are all bounded.
\end{itemize}

Observe that these assumptions, although somewhat standard, are demands over the behavior of the algorithm itself instead of the problem, and a convergence theory that makes strong assumptions over the behavior of the method is, to say the best, fragile. Even so, A1 and A2 can be considered a ``necessary evil'' since their violation means that the execution of the method has terminated in failure. Assumption A3, on the other hand, is not plausible since it basically guides the method towards convergence. Instead of A3, an assumption over the problem (and not the method), for instance the fulfilment of a constraint qualification at every limit point of $\seq{x}$, would be more reasonable for illustrating its strength. Of course Robinson's CQ is well-suited for this role since it implies A3, but an improvement can be made with the weaker constraint qualification \socpscpld{}; that is, under the following assumption:

\begin{itemize}
\item [A4.] All limit points of $\{x^k\}_{k\in \N}$ satisfy \socpscpld{}.
\end{itemize}

Then, we can easily rephrase an excerpt from the proof of~\cite[Theorem 1]{katofukusqp} and apply Theorem~\ref{socp:safeguard} to obtain the same convergence result of~\cite{katofukusqp} under A1, A2, and A4, instead of A3 or Robinson's CQ. However, it should be noticed that A4 may hold even when the approximate Lagrange multiplier sequences are unbounded.

\begin{proposition}
Under A1, the output sequences $\seq{x}$ and $\seq{\mu_j}$, $j\in\{1,\ldots,q\}$, of Algorithm~\ref{sqp} satisfy~\eqref{eq:akkt1} and~\eqref{eq:akkt2}.
\end{proposition}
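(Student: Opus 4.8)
The plan is to read the Approximate-KKT conditions \eqref{eq:akkt1}--\eqref{eq:akkt2} directly off the Karush--Kuhn--Tucker system of the quadratic subproblem \eqref{qp}. By A1, Step~1 is well-defined, so for every $k\in\N$ it returns a solution $d^k$ \emph{together with} Lagrange multipliers $\mu^k_j\in\Lor_{m_j}$, $j\in\{1,\ldots,q\}$; since the constraints of \eqref{qp} are affine in $d$, the returned pair $(d^k,\{\mu^k_j\})$ satisfies the KKT system (no constraint qualification being in question), namely the stationarity identity $\nabla f(x^k)+M^k d^k-\sum_{j=1}^q Dg_j(x^k)^\T\mu^k_j=0$, the primal feasibility $g_j(x^k)+Dg_j(x^k)d^k\in\Lor_{m_j}$, and the complementarity $\langle\mu^k_j,\,g_j(x^k)+Dg_j(x^k)d^k\rangle=0$ for every $j$. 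The natural move is then to set $\Delta^k_j\doteq Dg_j(x^k)d^k$: the last two relations are exactly the two requirements in \eqref{eq:akkt2}, and, remarkably, they hold \emph{identically for every $k$}, using nothing beyond A1.

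It remains to produce the limits $\nabla_x L(x^k,\mu^k_1,\ldots,\mu^k_q)\to 0$ and $\Delta^k_j\to 0$. Rearranging stationarity and recalling $\nabla_x L(x,\mu_1,\ldots,\mu_q)=\nabla f(x)-\sum_j Dg_j(x)^\T\mu_j$, I obtain the clean identity $\nabla_x L(x^k,\mu^k_1,\ldots,\mu^k_q)=-M^k d^k$. The uniform spectral bounds $\gamma_1\|z\|^2\le z^\T M^k z\le\gamma_2\|z\|^2$ imposed in Step~1 then give $\gamma_1\|d^k\|\le\|M^k d^k\|\le\gamma_2\|d^k\|$, so that \eqref{eq:akkt1} is in fact \emph{equivalent} to $d^k\to 0$; moreover, along any subsequence converging to a feasible limit point $\xb$ (the only situation relevant for Theorem~\ref{socp:safeguard}), the matrices $Dg_j(x^k)$ stay bounded by continuity, whence the same limit $d^k\to 0$ also yields $\Delta^k_j=Dg_j(x^k)d^k\to 0$. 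Thus the whole statement collapses to the single claim $d^k\to 0$.

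Establishing $d^k\to 0$ is the heart of the matter and the only nontrivial step; it is precisely the excerpt of \cite{katofukusqp} that the surrounding text promises to rephrase, and it is where the line-search mechanism of Algorithm~\ref{sqp} enters. The route I would follow is standard for exact-penalty SQP: the penalty-update rule of Step~2 keeps $\alpha^{k+1}$ large enough (dominating $\max_j|\mu^k_{j,0}|$) for $d^k$ to be a descent direction of the penalty function $\Phi_{\alpha^{k+1}}$, with directional derivative controlled by $-(d^k)^\T M^k d^k\le-\gamma_1\|d^k\|^2$; the acceptance test \eqref{eq:sqpstepsize} then enforces a sufficient decrease of $\Phi_{\alpha^{k+1}}$ at each accepted step, and telescoping these decreases forces $t^k(d^k)^\T M^k d^k\to 0$, after which a backtracking estimate bounding $t^k$ away from $0$ (using local boundedness of the data near $\xb$) delivers $\|d^k\|\to 0$. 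The delicate part is the bookkeeping of the penalty parameter $\alpha^k$ --- one must separate the case where $\alpha^k$ eventually stabilizes from the case $\alpha^k\to\infty$ --- together with the interplay of this update with the descent estimate and the step-size control; once $d^k\to 0$ is secured, the first two paragraphs close the proof.
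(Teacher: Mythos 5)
Your argument is correct and follows essentially the same route as the paper: read off the KKT system of the quadratic subproblem \eqref{qp} guaranteed by A1, set $\Delta^k_j\doteq Dg_j(x^k)d^k$, and reduce everything to $d^k\to 0$ together with the boundedness of $M^k$ (and of $Dg_j(x^k)$). The only difference is that the paper simply invokes \cite[Theorem 1]{katofukusqp} for the fact $d^k\to 0$ rather than re-deriving it from the penalty-update and line-search mechanism as you sketch in your final paragraph.
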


\begin{proof}
For each $k\in \N$, assumption A1 tells us that $x^k$ and $\mu_j^k\in \Lor_{m_j}$, $j\in \{1,\ldots,q\}$ satisfy the following:
\begin{eqnarray*}
	\nabla f(x^k)+M^kd^k - \sum_{j=1}^q Dg_j(x^k)^\T \mu_j^k=0,\label{eq:linkkt1} & \\
	\langle \mu_j^k, g_j(x^k)+Dg_j(x^k) d^k\rangle=0, & \forall j\in\{1,\ldots,q\},\label{eq:linkkt2}\\
	g_j(x^k)+Dg_j(x^k) d^k\in \Lor_{m_j}, & \forall j\in\{1,\ldots,q\}.\label{eq:linkkt3}
\end{eqnarray*}
Since by construction $\seq{M}$ is bounded and by~\cite[Theorem 1]{katofukusqp} we have $\seq{d}\to 0$, the conclusion follows by taking $\Delta^k_j\doteq Dg_j(x^k)d^k$ for every $k\in \N$ and every $j\in\{1,\ldots,q\}$.
\jo{\qed} \end{proof}

For the sake of completeness, we present a formal statement of the convergence result of Algorithm~\ref{sqp} under~\socpscpld{}, which follows immediately from the previous proposition.

\begin{corollary}
Assume A1, A2, and A4. Every limit point of the sequence $\seq{x}$ generated by Algorithm~\ref{sqp} satisfies the KKT conditions.
\end{corollary}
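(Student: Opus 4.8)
The plan is to reduce the statement directly to Theorem~\ref{socp:safeguard} by way of the Proposition just established. By that Proposition, under A1 the output sequences $\seq{x}$ and $\seq{\mu_j}$, $j\in\{1,\ldots,q\}$, satisfy the approximate-KKT relations~\eqref{eq:akkt1} and~\eqref{eq:akkt2} with the perturbations $\Delta_j^k\doteq Dg_j(x^k)d^k$. Assumption A2 guarantees that $\seq{x}$ is bounded, hence it admits at least one limit point; I would fix an arbitrary one, say $\xb$, together with an infinite index set $I\subseteq_{\infty}\N$ such that $\{x^k\}_{k\in I}\to \xb$.

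The only point requiring a short argument is the feasibility of $\xb$, which is a hypothesis of Theorem~\ref{socp:safeguard}. I would obtain it by passing to the limit in the membership relation of~\eqref{eq:akkt2}: for each $j$ we have $g_j(x^k)+\Delta_j^k\in \Lor_{m_j}$, and since $\seq{d}\to 0$ by~\cite[Theorem 1]{katofukusqp} while $\seq{M}$ is bounded by construction, it follows that $\Delta_j^k=Dg_j(x^k)d^k\to 0$. Using the continuity of $g_j$ together with the closedness of $\Lor_{m_j}$, letting $k\in I$, $k\to\infty$, yields $g_j(\xb)\in \Lor_{m_j}$ for every $j$, so $\xb\in \F$.

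Finally, by A4 the point $\xb$ satisfies \socpscpld{}. Having verified that $\seq{x}$ and $\seq{\mu_j}$ satisfy~\eqref{eq:akkt1}--\eqref{eq:akkt2}, that $\xb$ is a feasible limit point, and that \socpscpld{} holds at $\xb$, I would invoke Theorem~\ref{socp:safeguard} to conclude that $\xb$ satisfies the KKT conditions; as $\xb$ was an arbitrary limit point, this proves the claim. I do not expect any real obstacle here, since all the difficulty was already absorbed into Theorem~\ref{socp:safeguard} (whose proof carries out a Carath\'eodory decomposition, a boundedness-by-contradiction argument for the multiplier scalars, and a direct application of \socpscpld{}); the feasibility passage is routine and the remainder is bookkeeping.
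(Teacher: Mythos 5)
Your proposal is correct and follows essentially the same route as the paper, which derives the corollary immediately by combining the preceding Proposition (the AKKT relations \eqref{eq:akkt1}--\eqref{eq:akkt2} under A1) with Theorem~\ref{socp:safeguard}, A2 supplying limit points and A4 supplying \socpscpld{} at them. Your explicit check that the limit point is feasible is a detail the paper leaves implicit and is fine; the only (harmless) slip is that $\Delta_j^k=Dg_j(x^k)d^k\to 0$ uses $d^k\to 0$ together with boundedness of $Dg_j(x^k)$ along the convergent subsequence, not boundedness of $\seq{M}$.
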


\subsection{On Error Bounds and Robustness}

Another interesting implication of CRCQ and CPLD from the literature concerns error bounds. To address it to NSOCP, let us recall the definition of the so-called \textit{metric subregularity CQ} for (\ref{NSOCP}) problems.

\begin{definition}[MSCQ]
Let $\xb$ be a feasible point of (\ref{NSOCP}) and let \si{$g(x) \doteq (g_{1}(x), \ldots, g_{q}(x))$} \jo{$g(x) \doteq (g_{1}(x), \linebreak \ldots, g_{q}(x))$}. We say that $\xb$ satisfies the \textnormal{metric subregularity CQ} (MSCQ) when there exists some $\gamma > 0$ and a neighborhood $\mathcal{V}$ of $\xb$ such that
\begin{displaymath}
\textnormal{dist}(x,\F)\leq \gamma \textnormal{dist}(g(x),\Pi_{{j}=1}^{q} \Lor_{m_{{j}}})
\end{displaymath}
for every $x \in \mathcal{V}$, where $\F$ is the feasible set of~\eqref{NSOCP}. 
\end{definition}

The following result shows a sufficient condition in order to obtain MSCQ. This result is an adaptation from \si{Minchenko and Stakhovski~\cite[Theorem 2]{rcr}}\jo{Minchenko and Stakhovski~\cite[Theorem 2]{rcr}}\ima{~\cite[Theorem 2]{rcr}} for nonlinear programming problems. Also, an extension for semidefinite programming was made in~\cite[Proposition 5.1]{seq-crcq-sdp} and hence its proof will be omitted.

\begin{proposition}\label{socp:metric_prop}
Let $\xb\in \F$ and assume that $g_{{j}}$ are twice differentiable around $\xb$, with ${j}\in\{1,\ldots,q\}$. Given $x\in \R^{n}$, let $\Lambda_{x}(y)$ denote the set of Lagrange multipliers associated with any given solution $y$ of the problem of minimizing $\|z-x\|$ subject to $g_{{j}}(z)\in \Lor_{m_{{j}}}$, ${j}\in \{1,\ldots,q\}$, $z\in \R^{n}$. If there exist numbers $\tau>0$ and $\delta>0$ such that $\Lambda_{x}(y)\cap \cl(B(0,\tau))
\neq \emptyset$ for every $x\in B(\xb,\delta)$, then $\xb$ satisfies MSCQ. 
\end{proposition}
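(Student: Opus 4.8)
The plan is to prove the metric-subregularity bound $\dist(x,\F)\le\gamma\,\dist(g(x),\prod_{j=1}^{q}\Lor_{m_j})$ on a neighborhood of $\xb$ by adapting the nonlinear-programming argument of Minchenko and Stakhovski~\cite[Theorem 2]{rcr}, with the self-duality of the second-order cone playing the role of the sign analysis used for the orthant; this is exactly the route taken for the semidefinite case in~\cite[Proposition 5.1]{seq-crcq-sdp}. For $x\in\F$ the inequality is trivial, so I fix $x\in B(\xb,\delta)\setminus\F$ and let $y$ be a solution of the projection problem defining $\dist(x,\F)$, so that $t\doteq\norm{x-y}=\dist(x,\F)>0$ and $y\neq x$. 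The hypothesis supplies $\mu=(\mu_1,\dots,\mu_q)\in\Lambda_x(y)$ with $\norm{\mu}\le\tau$, and since the multipliers of a conic constraint lie in the dual cone, each $\mu_j\in\Lor_{m_j}$. Writing the KKT conditions of the \emph{unnormalized} distance objective $\norm{z-x}$ at the minimizer $y$ gives the scale-free identity $\frac{x-y}{t}=-\sum_{j=1}^{q}Dg_j(y)^\T\mu_j$ together with complementarity $\langle\mu_j,g_j(y)\rangle=0$.

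I would then take the inner product of this stationarity relation with $x-y$, expand each $g_j(x)$ to second order around $y$ (this is where twice-differentiability enters), and use complementarity to obtain $t=-\sum_{j=1}^{q}\langle\mu_j,g_j(x)\rangle+\sum_{j=1}^{q}\langle\mu_j,R_j\rangle$, where $\norm{R_j}\le\frac{M}{2}t^2$ with $M$ bounding the Hessians of the $g_j$ near $\xb$. To recognize a constraint-violation term, I invoke self-duality through the Moreau decomposition $g_j(x)=\mathcal{P}_{\Lor_{m_j}}(g_j(x))+r_j$, where $r_j\in-\Lor_{m_j}$ and $\norm{r_j}=\dist(g_j(x),\Lor_{m_j})$; since $\mu_j$ and $\mathcal{P}_{\Lor_{m_j}}(g_j(x))$ both lie in $\Lor_{m_j}$, this yields $-\langle\mu_j,g_j(x)\rangle\le\norm{\mu_j}\,\norm{r_j}$, and a Cauchy--Schwarz step across the blocks bounds the first sum by $\norm{\mu}\,\dist(g(x),\prod_{j=1}^{q}\Lor_{m_j})\le\tau\,\dist(g(x),\prod_{j=1}^{q}\Lor_{m_j})$. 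Collecting the estimates gives $t\le\tau\,\dist(g(x),\prod_{j=1}^{q}\Lor_{m_j})+\frac{M\sqrt{q}\,\tau}{2}\,t^2$.

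The delicate point --- and the step I expect to be the crux --- is securing the correct \emph{linear} homogeneity. The whole argument hinges on having worked with the unnormalized distance $\norm{z-x}$ rather than its square, so that the optimality condition produces $t$ to the first power on the left; a squared objective would instead leave $t^2$ and only deliver a H\"older-type bound. It then remains to absorb the quadratic remainder: since $t=\dist(x,\F)\le\norm{x-\xb}<\delta$, shrinking $\delta$ so that $\frac{M\sqrt{q}\,\tau}{2}\,t\le\frac12$ for all $x\in B(\xb,\delta)$ turns the last display into $t\le\tau\,\dist(g(x),\prod_{j=1}^{q}\Lor_{m_j})+\frac12 t$, hence $\dist(x,\F)=t\le 2\tau\,\dist(g(x),\prod_{j=1}^{q}\Lor_{m_j})$, which is MSCQ with $\gamma=2\tau$. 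Here the uniformity of the bound $\tau$ over the entire ball $B(\xb,\delta)$ is essential, as it is what makes $\gamma$ a single constant independent of $x$; this mirrors~\cite[Theorem 2]{rcr} and~\cite[Proposition 5.1]{seq-crcq-sdp}, which is why the present paper omits the computation.
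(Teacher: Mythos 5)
Your argument is correct and is precisely the adaptation of Minchenko and Stakhovski~\cite[Theorem 2]{rcr} (and of~\cite[Proposition 5.1]{seq-crcq-sdp}) that the paper invokes when it omits this proof: the unnormalized distance objective, the second-order expansion of $g_j$ at the projection $y$ combined with complementarity, the Moreau decomposition with self-duality of $\Lor_{m_j}$ to isolate the violation term, and the absorption of the quadratic remainder by shrinking $\delta$ all match the intended route, and you correctly identify the linear (rather than squared) objective as the source of the right homogeneity. The only cosmetic omission is the remark that $y$ itself lies in $B(\xb,2\delta)$ (since $\|y-x\|\leq\|x-\xb\|<\delta$), which is what lets the Hessian bound $M$ on a fixed neighborhood of $\xb$ control the Taylor remainder along the segment from $y$ to $x$.
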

Then, we shall prove that \socpscpld{} and \socpscrcq{} are robust, and this, together with Proposition~\ref{socp:metric_prop}, is enough to show that that they imply MSCQ.

\begin{theorem}[Robustness of seq-CPLD (and seq-CRCQ)]
	If $\xb\in \F$ satisfies \socpscpld{} (or \socpscrcq{}), then:
	\begin{enumerate}
	\item there is a neighborhood $\mathcal{V}$ of $\xb$, such that every $x\in \mathcal{V}\cap\F$ also satisfies \socpscpld{} (respectively, \socpscrcq{});
	\item MSCQ holds at $\xb$.
\end{enumerate}
\end{theorem}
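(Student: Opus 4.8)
The plan is to prove the two items separately: item~1 (robustness) by a direct contradiction argument using sequences, and then item~2 (MSCQ) by establishing a uniform bound on the Lagrange multipliers of the projection problems and invoking Proposition~\ref{socp:metric_prop}. Throughout, the pivotal structural fact is that for any feasible $x$ sufficiently close to $\xb$ one has $I_0(x)\subseteq I_0(\xb)$, $I_B(x)\subseteq I_0(\xb)\cup I_B(\xb)$ and $I_{\int}(\xb)\subseteq I_{\int}(x)$, by continuity of the $g_j$; moreover, an index $j\in I_0(\xb)$ that is boundary-active at $x$ (that is, $j\in I_B(x)$) contributes to the families of Definition~\ref{socp:scrcq} the single vector $Dg_j(x)^\T u_1(g_j(x))=\frac12 Dg_j(x)^\T(1,-w_j)$ with $w_j=\widehat g_j(x)/\|\widehat g_j(x)\|$, i.e. a vector of the same type as a $J_-$-slot at $\xb$.

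For item~1, I would argue by contradiction: assume $\xb$ satisfies \socpscpld{} but there is a sequence of feasible points $x^\ell\to\xb$, none of which does. Using the neighborhood characterization of Proposition~\ref{socp:nonseqscrcq} at each $x^\ell$, the failure yields unit vectors $\bar w^\ell=[\bar w_j^\ell]_{j\in I_0(x^\ell)}$, index subsets $J_B^\ell\subseteq I_B(x^\ell)$, $J_-^\ell,J_+^\ell\subseteq I_0(x^\ell)$ with $\mathcal{D}_{J_B^\ell,J_-^\ell,J_+^\ell}(x^\ell,\bar w^\ell)$ positively linearly dependent, together with points $(x^{\ell,k},w^{\ell,k})\to(x^\ell,\bar w^\ell)$ as $k\to\infty$ for which the corresponding family is linearly independent. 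By the infinite pigeonhole principle I may assume all these index sets are independent of $\ell$, and, by compactness of the unit sphere, that $\bar w^\ell$ and the boundary directions $\widehat g_j(x^\ell)/\|\widehat g_j(x^\ell)\|$ (for $j\in J_B^\ell\cap I_0(\xb)$) converge. Passing to these limits and translating the families into the $\xb$-framework via the structural fact above turns $\mathcal{D}_{J_B^\ell,J_-^\ell,J_+^\ell}(x^\ell,\bar w^\ell)$ into some family $\mathcal{D}_{\tilde J_B,\tilde J_-,\tilde J_+}(\xb,\tilde w)$; after normalizing the nonnegative dependence coefficients so that their maximum equals $1$ and taking limits, this family is positively linearly dependent at $\xb$. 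Finally, extracting a diagonal sequence from the $\{(x^{\ell,k},w^{\ell,k})\}$ and realizing the required boundary configurations through the perturbations $\Delta_j^k$ allowed in Definition~\ref{socp:scrcq}, I obtain a sequence converging to $\xb$ along which $\mathcal{D}_{\tilde J_B,\tilde J_-,\tilde J_+}$ stays linearly independent, contradicting \socpscpld{} at $\xb$. The argument for \socpscrcq{} is identical, replacing positive linear dependence by linear dependence.

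For item~2, by item~1 there is a neighborhood on which the condition holds at every feasible point; in particular it holds at the projection $y$ of any $x$ near $\xb$, since $\|y-x\|\le\|\xb-x\|$ forces $y\to\xb$ as $x\to\xb$. To obtain the uniform multiplier bound required by Proposition~\ref{socp:metric_prop}, I would argue by contradiction: if no such $\tau,\delta$ exist, there are $x^\ell\to\xb$ with projections $y^\ell\to\xb$ and minimal-norm multipliers $\mu^\ell\in\Lambda_{x^\ell}(y^\ell)$ with $\|\mu^\ell\|\to\infty$. The stationarity of the projection reads $y^\ell-x^\ell=\sum_j Dg_j(y^\ell)^\T\mu_j^\ell$, whose left-hand side tends to $0$, and complementarity $\langle\mu_j^\ell,g_j(y^\ell)\rangle=0$ forces each $\mu_j^\ell$ to share a spectral frame with $g_j(y^\ell)$, vanishing on $I_{\int}$ and lying along $u_1(g_j(y^\ell))$ on $I_B$. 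This is exactly the approximate-stationarity setting of Theorem~\ref{socp:safeguard} (with $\Delta_j^\ell=0$), so dividing by $\|\mu^\ell\|$, reducing to a linearly independent subfamily by Carath\'eodory's Lemma~\ref{lem:carath}, and passing to the limit produces a positively linearly dependent family $\mathcal{D}_{J_B,J_-,J_+}(\xb,\bar w)$ whose evaluation at $y^\ell$ is linearly independent, contradicting \socpscpld{} at $\xb$. Hence the minimal-norm multipliers stay uniformly bounded, and Proposition~\ref{socp:metric_prop} yields MSCQ.

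I expect the main obstacle to be the index bookkeeping in item~1: correctly translating each family $\mathcal{D}_{J_B^\ell,J_-^\ell,J_+^\ell}(x^\ell,\cdot)$ at the perturbed point into a family indexed in the $\xb$-framework --- in particular, showing that a $J_B$-slot coming from an index of $I_0(\xb)$ must be placed in the $J_-$-slot with the correct limiting direction $\bar w_j$ --- and then confirming, via the perturbations $\Delta_j^k$, that the independent families survive in the limit, so that the contradiction is genuinely with \socpscpld{} at $\xb$ and not merely with a rearranged condition. The passage from positive linear dependence at $x^\ell$ to positive linear dependence at $\xb$ also requires the standard coefficient-normalization/compactness step, since positive linear dependence is not a closed condition without it.
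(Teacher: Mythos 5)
Your proof follows essentially the same route as the paper: item~1 is the same contradiction argument (negate the condition along a sequence of feasible points, fix the index sets by the pigeonhole principle, relabel the $J_B$-indices lying in $I_0(\xb)$ as $\tilde J_-$-slots, extract a diagonal subsequence, and realize the limiting eigendirections via the perturbations $\Delta_j^k$), including the coefficient-normalization step that the paper leaves implicit. For item~2 the paper omits the details, referring to the NSDP analogue, but your reconstruction --- nonemptiness of $\Lambda_{x}(y)$ from item~1, uniform boundedness of minimal-norm multipliers via Carath\'eodory's Lemma and the seq-CPLD contradiction, then Proposition~\ref{socp:metric_prop} --- is exactly the intended argument.
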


\begin{proof}
We will only exhibit the proof for~\socpscpld{}, since the proof for~\socpscrcq{} is analogous. Suppose that item 1 is false, then there is a sequence $\seq{x}\to \xb$ such that \socpscpld{} fails at $x^k$, for all $k\in \N$. That is, for each $k\in \N$ there is some $w^k\doteq [w^k_j]_{j\in I_0(x^k)}$ with $\|w_k^k\|=1$ for every $j\in I_0(x^k)$, some sequences $\{x^k_\ell\}_{\ell\in \N}\to x^k$ and $\{w_\ell^k\}_{\ell\in \N}\to w^k$, and subsets $J_B^k\subseteq I_B(x^k)$ and $J_-^k,J_+^k\subseteq I_0(x^k)$ such that $\mathcal{D}_{J_B^k,J_-^k,J_+^k}(x^k,w^k)$ is positively linearly dependent, but $\mathcal{D}_{J_B^k,J_-^k,J_+^k}(x^k_\ell, w_\ell^k)$ is linearly independent for every $\ell\in \N$. By the infinite pigeonhole principle, we can assume that $I_0=I_0(x^k)$ and $I_B=I_B(x^k)$ are the same for every $k\in \N$, and also that $J_B=J_B^k$, $J_-=J_-^k$, and $J_+=J_+^k$ for every $k\in \N$, passing to a subsequence if necessary. Moreover, note that we can also assume that $I_0\subseteq I_0(\xb)$ and $I_B\subseteq I_0(\xb)\cup I_B(\xb)$. Now consider the following sets:
\[
	\tilde{J}_B\doteq J_B\cap I_B(\xb), \quad \tilde{J}_-\doteq J_-\cup (J_B\cap I_0(\xb)), \quad \textnormal{and} \quad \tilde{J}_+\doteq J_+.
\]
By construction, note that $\mathcal{D}_{\tilde{J}_B,\tilde{J}_-,\tilde{J}_+}(x^k_\ell, w_\ell^k)$ is linearly independent for every $k,\ell\in \N$. For each $k$, let $\ell(k)$ be such that $\|w^k-w_{\ell(k)}^k\|<\frac{1}{k}$, and let $\bar{w}$ be any limit point of $\seq{w}$. Without loss of generality, we will assume that $w^k\to \bar{w}$, which also implies that $w^k_{\ell(k)}\to \bar{w}$.

Analogously to~\eqref{eq:deltabom}, we can construct some $\Delta^k_j\in \R^{m_j}$ for every $j\in I_0(\xb)\cup I_B(\xb)$, such that \ima{\linebreak} $g_j(x^k_{\ell(k)})+\Delta_j^k\in \bd_+\Lor_{m_j}$ and hence its eigenvectors are uniquely determined by
\[
u_1(g_j(x^k_{\ell(k)})+\Delta_j^k)=\frac{1}{2}\left(1,\frac{\widehat{g}_j(x^k_{\ell(k)})}{\|\widehat{g}_j(x^k_{\ell(k)})\|}\right), \ \forall j\in \tilde{J}_B,
\]
and
\[
u_1(g_j(x^k_{\ell(k)})+\Delta_j^k)=\frac{1}{2}\left( 1, -w^k_{\ell(k)} \right)
\]
and
\[
u_2(g_j(x^k_{\ell(k)})+\Delta_j^k)=\frac{1}{2}\left( 1, w^k_{\ell(k)} \right), \ \forall j\in \tilde{J}_-\cup \tilde{J}_+.
\] 
With this in mind, on the one hand, we have that $\mathcal{D}_{\tilde{J}_B,\tilde{J}_-,\tilde{J}_+}(\xb, \bar{w})$ is linearly dependent, because the family $\mathcal{D}_{\tilde{J}_B,\tilde{J}_-,\tilde{J}_+}(x^k, w^k)$ is linearly dependent for every $k\in \N$. But on the other hand, $\mathcal{D}_{\tilde{J}_B,\tilde{J}_-,\tilde{J}_+}(x^k_{\ell(k)}, w_{\ell(k)}^k)$ is linearly independent for every $k\in \N$, and the fact that the eigenvectors of $g_j(x^k_{\ell(k)})+\Delta_j^k$ are uniquely determined for all $j\in \tilde{J}_B \cup\tilde{J}_-\cup\tilde{J}_+$, together with $w^k_{\ell(k)}\to \bar{w}$, contradicts \socpscpld{} at $\xb$.

The proof of item 2 follows analogously to the proof of~\cite[Theorem 5.1]{seq-crcq-sdp}, which is essentially a corollary of item 1 and Proposition~\ref{socp:metric_prop}; hence it will be omitted.
\jo{\qed} \end{proof}

For a better exposition, what follows is a diagram that represents the relationship of some existing constraint qualifications and the ones that we present in this paper. 

\begin{figure}[!htb]
	\centering
	\begin{tikzpicture}
		\tikzset{
		    old/.style={rectangle, very thick, minimum size=0.7cm,fill=black!15,draw=black},
		    new/.style={rectangle,very thick, minimum size=0.7cm,fill=black!5,draw=black},
		    v/.style={-stealth,very thick,black!80!black},
		    u/.style={-stealth,dashed,very thick,black!80!black},
		}
		\node[old] (NDG) at (0,0) {Nondegeneracy};
		\node[old] (RCQ) at (3.5,0) {Robinson's CQ};
		\node[new] (SCR) at (3.5,-2) {Seq-CRCQ};
		\node[new] (SCP) at (7,0) {Seq-CPLD};
		\node[old] (MSCQ) at (9.5,0) {MSCQ};
		\node[new] (WCR) at (7,-3.8) {Weak-CRCQ};
		\node[new] (WCP) at (9.2,-2) {Weak-CPLD};
		\node[new] (WNDG) at (0,-2) {Weak-nondegeneracy};
		\node[new] (WROB) at (3.5,-3.8) {Weak-Robinson's CQ};
		\draw[v] (NDG) -- (RCQ);
		\draw[v] (NDG) -- (WNDG);
		\draw[v] (WNDG) -- (WROB);
		\draw[v] (NDG) -- (SCR);	
		\draw[v] (SCR) -- (SCP);
		\draw[v] (WROB) -- (6.7,-2) -- (WCP);
		\draw[v] (WCR) -- (WCP);
		\draw[v] (RCQ) -- (SCP);
		\draw[u] (SCP) -- (MSCQ);
		\draw[v] (SCR) -- (WCR);
		\draw[v] (SCP) -- (WCP);
		\draw[v] (WNDG) -- (0,-3.8) -- (2.2,-5) -- (4.8,-5) -- (WCR);
		\draw[u] (RCQ) -- (5.5,-2) -- (5.5,-3.8)  -- (WROB);
	\end{tikzpicture}
	\caption{Constraint qualifications for NSOCP. Strict implications are represented by solid arrows. Possibly two-sided implications are represented by dashed arrows.}
	\label{fig:relationsCQ}
\end{figure}
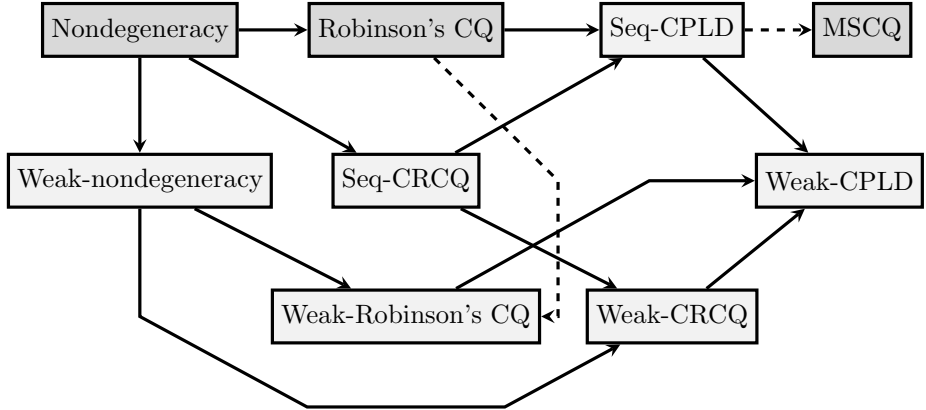

%
%
%
%
%
%
%
%
%
%
%
%
%
%

\section{Conclusion}\label{sec:conclusion}

In our previous work\ima{ (see}~\cite{weaksparsecq}\ima{)}, we studied two ways of incorporating some structural features of the semidefinite cone into the nondegeneracy condition of \si{Shapiro and Fan}\jo{Shapiro and Fan}~\cite{shapfan}; among them was the eigendecomposition, which has always been widely exploited in the design of algorithms for NSDP -- for instance, see \cite{kocvara}. Quite surprisingly, after incorporating eigendecompositions into the nondegeneracy condition (and also Robinson's CQ) we obtained a strictly weaker constraint qualification by means of considering only converging sequences of eigenvectors associated with a given point of interest, which was called weak-nondegeneracy (respectively, weak-Robinson's CQ). Moreover, this ``sequential approach'' allowed us to bypass the main difficulty in generalizing the celebrated constant rank constraint qualification of NLP, to NSDP~\cite{seq-crcq-sdp}, which is the presence of a potentially non-zero duality gap even in feasible linear problems (see also~\cite{facial-crcq} for a more detailed discussion on this topic). In this paper we bring those concepts to the context of NSOCP where several improvements with respect to the NSDP approach were made.

It is well known (see, for instance, the seminal work of \si{Alizadeh and Goldfarb}\jo{Alizadeh and Goldfarb}~\cite{surveysocp}) that although NSOCP problems can be reformulated as particular instances of NSDP problems, solving them via such a reformulation is generally not a good practice for a handful of reasons. Likewise, extensions of the sequential-type constraint qualifications of~\cite{seq-crcq-sdp,weaksparsecq} to NSOCP demand a specialized analysis to be properly conducted. In fact, the second-order cone induces a distinguished eigendecomposition that is easily computable, contrary to NSDP, which allows a deeper analysis to be made. For instance, besides extending the weak variants of the nondegeneracy condition and Robinson's CQ from NSDP to NSOCP, this paper also presents a full comparison between these weak conditions and their standard versions, which is an issue we could not properly address in~\cite{weaksparsecq}. Some technical results from \cite{weaksparsecq} could also be explained in a somewhat natural way in this paper. Moreover, besides extending the constant rank conditions from~\cite{seq-crcq-sdp}, we also gave them a geometrical interpretation in terms of the conical slices of the second-order cone (Remark~\ref{rem:slices}).

Very recently, we have been extending the notions of constant rank-type constraint qualifications to the contexts of NSDP and NSOCP. While \cite{facial-crcq} follows an implicit function approach pioneered by \si{Janin}\jo{Janin} \cite{janin} and giving rise to a definition of CRCQ that enjoys strong second-order properties, in this paper we exploit a sequential approach \ima{based on~} \cite{Santos2019}, which allows even weaker conditions to be defined, such as the CPLD condition, while enjoying global convergence properties of several algorithms without assuming boundedness of the set of Lagrange multipliers but still allowing computation of error bounds. Not surprisingly, when extending NLP concepts to the conic context, different points of view may give rise to different possible extensions, each one extending different applications of the concept. Some relevant topics in conic programming that we expect the conditions we define in this paper will be particularly relevant are: in the global convergence analysis of other classes of algorithms, including second-order algorithms\ima{ (see}~\cite{fukuda-haeser-mito}\ima{)}; the study of the boundedness of Lagrange multipliers estimates and the use of scaled stopping criteria \ima{ (as in }\cite{scaledal}\ima{)}; stability analysis of parametric optimization problems \ima{ (see, for instance, }\cite{bonnansramirez,outratagfrerer,param,stab2,stab1,fullstab,outrataramirez}\ima{)}; and necessary optimality conditions for some extended classes of bilevel optimization problems with conic constraints \ima{(}\cite{chenyezhangzhou,chiwanhao,ye}\ima{)}.

\jo{
\begin{acknowledgements}
\acknowledgementstext
\end{acknowledgements}
}

\ima{
\section*{Funding}
\acknowledgementstext
}

\si{\bibliographystyle{plain}}
\jo{\bibliographystyle{plain}}
\ima{\bibliographystyle{IMANUM-BIB}}

\end{document}